\documentclass[a4paper,reqno]{amsart}
\usepackage{indentfirst}
\usepackage{amsfonts}
\usepackage{stmaryrd}
\usepackage{amsthm}
\usepackage{amssymb}
\usepackage{amsmath}
\usepackage{thmtools}

\usepackage[disable]{todonotes}
\usetikzlibrary{decorations.markings}
\usepackage{enumitem}
\usepackage{setspace}
\usepackage{hyperref}
\usetikzlibrary{matrix,arrows,positioning}

\makeatletter
\providecommand\@dotsep{5}
\renewcommand{\listoftodos}[1][\@todonotes@todolistname]{%
  \@starttoc{tdo}{#1}}
\makeatother

\newtheorem{Lem}{Lemma}[section]
\newtheorem{Prop}[Lem]{Proposition}
\newtheorem*{Def}{Definition}

\theoremstyle{plain}
\newtheorem{Thm}[Lem]{Theorem}
\newtheorem{Cor}[Lem]{Corollary}

\theoremstyle{definition}
\declaretheorem[numbered=no,name=Example,qed={\lower-0.3ex\hbox{$\triangleleft$}}]{Ex}
\newtheorem*{Rem}{Remark}

{\unskip\nobreak\hskip 1em plus 1fil\nobreak$\Box$
\parfillskip=0pt%
\endtrivlist}

\newcommand{\Dim}{\text{\textnormal{\textbf{dim}}}}
\newcommand{\Sym}{\text{\textnormal{Sym}}}
\newcommand{\Hom}{\text{\textnormal{Hom}}}

\newcommand{\Ext}{\text{\textnormal{Ext}}}
\newcommand{\codim}{\text{\textnormal{codim}}}
\newcommand{\git}{/\!\!/}

\mathchardef\mhyphen="2D

\def\itemNum$#1${\item $\displaystyle#1$
   \hfill\refstepcounter{equation}(\theequation)}
\def\pser#1{[\![#1]\!]} 

\begin{document}

\title[Orientifold DT invariants]{Cohomological orientifold Donaldson-Thomas invariants as Chow groups}

\author[H. Franzen]{Hans Franzen}
\address{Mathematisches Institut der Universit\"{a}t Bonn \\
Endenicher Allee 60, 53115 Bonn}
\email{franzen@math.uni-bonn.de}

\author[M.\,B. Young]{Matthew B. Young}
\address{Department of Mathematics\\
The University of Hong Kong\\
Pokfulam, Hong Kong}
\email{myoung@maths.hku.hk}

\date{\today}

\keywords{}

\begin{abstract}
We establish a geometric interpretation of orientifold Donaldson-Thomas invariants of $\sigma$-symmetric quivers with involution. More precisely, we prove that the cohomological orientifold Donaldson-Thomas invariant is isomorphic to the rational Chow group of the moduli space of $\sigma$-stable self-dual quiver representations. As an application we prove that the Chow Betti numbers of moduli spaces of stable $m$-tuples in classical Lie algebras can be computed numerically. We also prove a cohomological wall-crossing formula relating semistable Hall modules for different stabilities.
\end{abstract}

\maketitle

\tableofcontents

\setcounter{footnote}{0}

\section*{Introduction}
\addtocontents{toc}{\protect\setcounter{tocdepth}{1}}

There are by now a number of geometric interpretations of Donaldson-Thomas invariants of quivers. In various settings and levels of generality, see \cite{hausel2013}, \cite{chen2014}, \cite{meinhardt2014}, \cite{franzen2015b}, \cite{letellier2015}, \cite{davison2016a}. In particular, in \cite{franzen2015b} it is proved that the cohomological Donaldson-Thomas invariant of a symmetric quiver is isomorphic to the rational Chow group of the moduli space of stable representations of the quiver. In this paper we establish an analogous result for orientifold Donaldson-Thomas invariants, giving the first geometric interpretation of these invariants. Roughly speaking, orientifold Donaldson-Thomas invariants are counting invariants of moduli spaces of quiver representations which have orthogonal or symplectic structure groups.

The slope $\mu$ motivic Donaldson-Thomas invariant of a quiver $Q$ with stability $\theta$ is defined in terms of the Hilbert-Poincar\'{e} series of the semistable cohomological Hall algebra $\mathcal{H}_{Q,\mu}^{\theta \mhyphen ss}$, as defined by Kontsevich and Soibelman \cite{kontsevich2011}. The algebra $\mathcal{H}_{Q,\mu}^{\theta \mhyphen ss}$ is a convolution algebra on the cohomology of the stack of semistable representations of slope $\mu$. Moreover, a theory of cohomologically refined Donaldson-Thomas invariants is most naturally defined in terms of the algebra $\mathcal{H}_Q^{\theta \mhyphen ss}$ itself. When the quiver has an involution and duality structure (see Section \ref{sec:sdReps}) there is an associated semistable cohomological Hall module $\mathcal{M}_Q^{\theta \mhyphen ss}$, a left $\mathcal{H}_{Q,\mu=0}^{\theta \mhyphen ss}$-module structure on the cohomology of the stack of semistable self-dual representations, in terms of which cohomological orientifold Donaldson-Thomas theory is defined \cite{mbyoung2016b}.

Motivated by results of \cite{franzen2015b}, in this paper we study a Chow theoretic version of $\mathcal{M}_Q^{\theta \mhyphen ss}$, defined in the same way as $\mathcal{M}_Q^{\theta \mhyphen ss}$ but with Chow groups used in place of cohomology groups. The resulting object $\mathcal{B}_Q^{\theta \mhyphen ss}$ is a left module over the Chow theoretic Hall algebra $\mathcal{A}_{Q,\mu =0}^{\theta \mhyphen ss}$ introduced in \cite{franzen2016}. While cohomological and Chow theoretic Hall algebras share many common features, one technical advantage of the latter is the existence of the localization exact sequence in Chow theory. Using the localization exact sequence, the problem of understanding the trivial stability Hall algebra $\mathcal{A}_Q$ can be reduced to that of understanding the Chow groups of the Harder-Narasimhan strata of the stack of representations and the closely related algebras $\mathcal{A}_{Q,\mu}^{\theta \mhyphen ss}$. The Harder-Narasimhan stratification has been used extensively in the representation theory of quivers (see \cite{reineke2003} for its introduction) and is central to the approach of \cite{franzen2015b}. In its original setting, the Harder-Narasimhan stratification was used by Atiyah and Bott \cite{atiyah1983} in their study of two dimensional Yang-Mills theory with compact gauge group $\mathsf{H}$. See also \cite{laumon1996} for related work. The case in which $\mathsf{H}$ is a unitary group is analogous to the case of ordinary representations of quivers, but the framework is valid for arbitrary $\mathsf{H}$. The approach of this paper is to import the method of Atiyah and Bott when $\mathsf{H}$ is an orthogonal or symplectic group. The result is the $\sigma$-Harder-Narasimhan stratification of the stack of self-dual quiver representations. The main geometric properties of this stratification are summarized in Propositions \ref{prop:sdHNStrat} and \ref{prop:sdHNStratClose}, both of which are direct analogues of results in the ordinary case. The key result we prove using the $\sigma$-Harder-Narasimhan stratification is Proposition \ref{prop:noOddCohom}, which states that the mixed Hodge structure on the cohomology of the stack of semistable self-dual representations is pure of Hodge-Tate type. In the ordinary setting there are a number of proofs of the analogous purity statement (see \cite{franzen2015b}, \cite{davison2016a}), all of which use in some way framed quiver representations. Because of technical problems associated with framing in the orientifold setting, we instead prove Proposition \ref{prop:noOddCohom} by first showing that the $\sigma$-Harder-Narasimhan stratification is equivariantly perfect in the sense of \cite{atiyah1983}. Using Proposition \ref{prop:noOddCohom} we show in Theorem \ref{thm:moduleComparison} that the cycle map defines an isomorphism $\mathcal{B}_Q^{\theta \mhyphen ss} \xrightarrow[]{\sim} \mathcal{M}_Q^{\theta \mhyphen ss}$ which is compatible with the corresponding algebra isomorphism $\mathcal{A}_{Q,\mu=0}^{\theta \mhyphen ss} \xrightarrow[]{\sim} \mathcal{H}_{Q,\mu=0}^{\theta \mhyphen ss}$ from \cite{franzen2015b}. We deduce from this result that the cycle map from the Chow group to the cohomology of the moduli scheme of $\sigma$-stable self-dual representations surjects onto the pure part; see Corollary \ref{cor:purePart} and  Theorem \ref{thm:purePartOrdinary} for a strengthening of this result in the ordinary setting. In Theorem \ref{thm:hnFactor} we prove a wall-crossing formula for Chow theoretic Hall modules. More precisely, we show that as a graded vector space the trivial stability Hall module $\mathcal{B}_Q$ is determined by the semistable module $\mathcal{B}_Q^{\theta \mhyphen ss}$ and algebras $\{ \mathcal{A}_{Q,\mu}^{\theta \mhyphen ss} \}_{\mu > 0}$. Passing to Grothendieck groups, Theorem \ref{thm:hnFactor} recovers the motivic orientifold wall-crossing formula of \cite{mbyoung2015}.

In Section \ref{sec:oriDT} we give some applications of the above results to Donaldson-Thomas theory. We prove in Theorem \ref{thm:sdChowInterpretation} that the cohomological orientifold Donaldson-Thomas invariant of a $\sigma$-symmetric quiver is isomorphic to the rational Chow group of the moduli scheme of $\sigma$-stable self-dual representations. An immediate corollary is the confirmation of the orientifold variant of the Kontsevich-Soibelman integrality conjecture for $\sigma$-symmetric quivers; see Corollary \ref{cor:intConj}. In Theorem \ref{thm:noWallCrossing} we prove that the orientifold Donaldson-Thomas invariants of $\sigma$-symmetric quivers, and hence the Chow groups of their $\sigma$-stable moduli spaces, are independent of the choice of stability. In Section \ref{sec:loopQuiverApplication} we restrict attention to the quivers $L_m$ with one node and $m \geq 0$ loops. For particular choices of duality structures, moduli schemes of self-dual representations of $L_m$ are moduli schemes of $m$-tuples in orthogonal or symplectic Lie algebras. It follows from Theorem \ref{thm:sdChowInterpretation}, together with Reineke's computation of the Donaldson-Thomas invariants of $L_m$ \cite{reineke2012} and the freeness of $\mathcal{M}_{L_m}$ \cite{mbyoung2016b}, that the Chow Betti numbers of moduli schemes of $\sigma$-stable self-dual representations of $L_m$ can be computed numerically. As a final application, using Theorem \ref{thm:sdChowInterpretation} and the explicit shuffle description of $\mathcal{M}_{L_m}$ from \cite{mbyoung2016b} we show in Theorem \ref{thm:orthSymDuality} that the Chow Betti numbers of moduli schemes of stable $m$-tuples in symplectic and odd orthogonal Lie algebras groups agree.

While closely related, the present paper is mostly independent of \cite{mbyoung2016b}. Only in Section \ref{sec:loopQuiverApplication} do we use a difficult result from \cite{mbyoung2016b}.

\subsection*{Notation}

Throughout the paper we write $\otimes$ for $\otimes_{\mathbb{Q}}$, unless otherwise noted. All varieties/schemes are over the ground field $\mathbb{C}$. By the dimension of a smooth variety we always mean its complex dimension.

\subsection*{Acknowledgements}
The authors would like to thank Ben Davison for helpful correspondences. This work grew out of a visit of M.Y. to the Hausdorff Center for Mathematics. He would like to thank Tobias Dyckerhoff for the invitation and support. H.F.\ was supported by the DFG SFB / Transregio 45 ``Perioden, Modulr\"aume und Arithmetik algebraischer Variet\"aten''. M.Y. was partially supported by the Research Grants Council of the Hong Kong SAR, China (GRF HKU 703712) and by HKU URC Conference Support for Teaching Staff (award no. 143414).

\section{Background material}

\subsection{Quiver representations}
\addtocontents{toc}{\protect\setcounter{tocdepth}{2}}

Let $Q=(Q_0, Q_1)$ be a finite quiver and let $\Lambda_Q^+ =  \mathbb{Z}_{\geq 0} Q_0$ be its monoid of dimension vectors. A representation $(U,u)$ of $Q$ is a finite dimensional $Q_0$-graded complex vector space $U = \bigoplus_{i\in Q_0} U_i$ together with a linear map $u_{\alpha}: U_i \rightarrow U_j$ for each arrow $i \xrightarrow[]{\alpha} j$. The dimension vector $\Dim\, U \in \Lambda_Q^+$ of $U$ is the tuple $(\dim_{\mathbb{C}} U_i)_{i \in Q_0}$ and the total dimension $\dim U \in \mathbb{Z}_{\geq 0}$ is the dimension of $U$ as a complex vector space. We also define the total dimension $\dim d$ of a dimension vector as $\sum_{i \in Q_0} d_i$.

The affine variety of representations of $Q$ of dimension vector $d \in \Lambda_Q^+$ is
\[
R_d = \bigoplus_{i \xrightarrow[]{\alpha} j } \Hom_{\mathbb{C}}(\mathbb{C}^{d_i}, \mathbb{C}^{d_j}).
\]
The group $\mathsf{GL}_d =\prod_{i \in Q_0} \mathsf{GL}_{d_i}(\mathbb{C})$ acts linearly on $R_d$ via change of basis. The stack of representations of dimension vector $d$ is $\mathbf{M}_d = [R_d \slash \mathsf{GL}_d]$.

Write $\mathsf{Rep}_{\mathbb{C}}(Q)$ for the hereditary abelian category of finite dimensional complex representations of $Q$. The Euler form of $\mathsf{Rep}_{\mathbb{C}}(Q)$ descends to the bilinear form on $\Lambda_Q=\mathbb{Z} Q_0$ given by
\[
\chi(d, d^{\prime}) = \sum_{i \in Q_0} d_i d_i^{\prime} - \sum_{i \xrightarrow[]{\alpha} j } d_i d_j^{\prime}.
\]

\subsection{Self-dual quiver representations}

\label{sec:sdReps}

For an introduction to self-dual quiver representations see \cite{derksen2002}, or from the point of view of this paper, \cite{mbyoung2015}.

An involution of a quiver $Q$ is a pair of involutions $\sigma : Q_k \rightarrow Q_k$, $k=0,1$, such that
\begin{enumerate}[label=(\roman*)]
\item if $i \xrightarrow[]{\alpha} j$, then $\sigma(j) \xrightarrow[]{\sigma(\alpha)} \sigma(i)$, and
\item if $i \xrightarrow{\alpha} \sigma(i)$, then $\alpha = \sigma(\alpha)$.
\end{enumerate}
A duality structure on $(Q,\sigma)$ is a pair of functions $s: Q_0 \rightarrow \{ \pm 1 \}$ and $\tau: Q_1 \rightarrow \{ \pm 1 \}$ such that $s$ is $\sigma$-invariant and $\tau_{\alpha} \tau_{\sigma(\alpha)} = s_i s_j$ for each arrow $i \xrightarrow[]{\alpha} j$.

Fix an involution and duality structure on $Q$. A self-dual representation of $Q$ is a representation $(M,m)$ with a nondegenerate bilinear form $\langle \cdot, \cdot \rangle$ having the following properties:
\begin{enumerate}[label=(\roman*)]
\item $M_i$ and $M_j$ are orthogonal unless $i =\sigma(j)$,

\item the restriction of $\langle \cdot, \cdot \rangle$ to $M_i + M_{\sigma(i)}$ satisfies $\langle x,x^{\prime} \rangle = s_i \langle x^{\prime}, x \rangle$, and

\item for each arrow $i \xrightarrow[]{\alpha} j$ the structure maps of $M$ satisfy
\begin{equation}
\label{eq:strSymm}
\langle m_{\alpha} x,x^{\prime} \rangle - \tau_{\alpha}  \langle x, m_{\sigma(\alpha)} x^{\prime} \rangle =0, \qquad x \in M_i, \; x^{\prime} \in M_{\sigma(j)}.
\end{equation}
\end{enumerate}
The dimension vector of a self-dual representation lies in the submonoid $\Lambda_Q^{\sigma,+} \subset \Lambda_Q^+$ of dimension vectors which are fixed by $\sigma$.

Categorically, a choice of duality structure induces an exact contravariant functor $S: \mathsf{Rep}_{\mathbb{C}}(Q) \rightarrow \mathsf{Rep}_{\mathbb{C}}(Q)$ and an isomorphism of functors $\Theta: \mathbf{1}_{\mathsf{Rep}(Q)} \xrightarrow[]{\sim} S^2$ which satisfies $S(\Theta_U) \Theta_{S(U)} = \mathbf{1}_{S(U)}$, thus giving $\mathsf{Rep}_{\mathbb{C}}(Q)$ the structure of an abelian category with duality. In this language, a self-dual representation is a pair consisting of a representation $M$ and an isomorphism $M \xrightarrow[]{\sim} S(M)$ which is symmetric with respect to $\Theta_M$.

Let $e \in \Lambda_Q^{\sigma,+}$. Throughout the paper we will assume that $e_i$ is even whenever $i \in Q_0^{\sigma}$ and $s_i=-1$. The trivial representation $\mathbb{C}^e = \bigoplus_{i\in Q_0} \mathbb{C}^{e_i}$ then admits a self-dual structure which is unique up to isometry. The affine variety of self-dual representations of dimension vector $e$ can be identified with the subspace of $R_e$ consisting of structure maps which satisfy equation \eqref{eq:strSymm}. Explicitly, fixing partitions $Q_0 = Q_0^- \sqcup Q_0^{\sigma} \sqcup Q_0^+$ and $Q_1 = Q_1^- \sqcup Q_1^{\sigma} \sqcup Q_1^+$ such that $Q_0^{\sigma}$ consists of the nodes fixed by $\sigma$ and $\sigma(Q_0^-) = Q_0^+$, and similarly for $Q_1$, we have
\[
R_e^{\sigma} \simeq \bigoplus_{ i \xrightarrow[]{\alpha} j  \in Q_1^+} \Hom_{\mathbb{C}} (\mathbb{C}^{e_i}, \mathbb{C}^{e_j} ) \oplus \bigoplus_{ i \xrightarrow[]{\alpha} \sigma(i)  \in Q_1^{\sigma}} \mbox{Bil}^{s_i \tau_{\alpha}}(\mathbb{C}^{e_i})
\]
where $\mbox{Bil}^{\pm 1}(\mathbb{C}^{e_i})$ denotes the vector space of symmetric ($+$) or skew-symmetric ($-$) bilinear forms on $\mathbb{C}^{e_i}$. The isometry group of $\mathbb{C}^e$ is
\[
\mathsf{G}_e^{\sigma} \simeq  \prod_{i \in Q_0^+} \mathsf{GL}_{e_i} (\mathbb{C}) \times \prod_{i \in Q_0^{\sigma}} \mathsf{G}_{e_i}^{s_i}
\]
where
\[
\mathsf{G}_{e_i}^{s_i} = \left\{ \begin{array}{ll} \mathsf{Sp}_{e_i}(\mathbb{C}), & \mbox{ if } s_i=-1 \\ \mathsf{O}_{e_i}(\mathbb{C}), & \mbox{ if } s_i=1. \end{array} \right.
\]
The stack of self-dual representations of dimension vector $e$ is $\mathbf{M}_e^{\sigma} = [R_e^{\sigma} \slash \mathsf{G}_e^{\sigma}]$.

From the categorical point of view it is clear that for each representation $U$ the vector space $\Ext^i(S(U),U)$ is naturally a $\mathbb{Z}_2$-representation. By $\Ext^i(S(U),U)^{\pm S}$ we denote the subspace of symmetric ($+$) or skew-symmetric ($-$) elements. The analogue of the Euler form in the self-dual setting is
\[
\mathcal{E}(U) =\dim_{\mathbb{C}} \, \Hom(S(U),U)^{-S} - \dim_{\mathbb{C}} \, \Ext^1(S(U),U)^S.
\]
This descends to a function $\mathcal{E} : \Lambda_Q \rightarrow \mathbb{Z}$ given by
\begin{align*}
\mathcal{E}(d) =\sum_{i \in Q_0^{\sigma}} \frac{d_i(d_i -s_i)}{2}  + &   \sum_{i \in Q_0^+}  d_{\sigma(i)} d_i  - \\
  & \sum_{\sigma(i) \xrightarrow[]{\alpha} i \in Q_1^{\sigma}} \frac{d_i(d_i + \tau_{\alpha} s_i)}{2} -\sum_{i \xrightarrow[]{\alpha} j  \in Q_1^+} d_{\sigma(i)} d_j.
\end{align*}

\subsection{Moduli spaces of quiver representations}

We recall the construction of moduli spaces of quiver representations using geometric invariant theory (GIT) \cite{king1994}. Fix an element $\theta \in \Hom_{\mathbb{Z}}(\Lambda_Q, \mathbb{Z})$, called a stability. A representation $V$ of $Q$ is called semistable if $\mu(U) \leq \mu(V)$ for all non-trivial subrepresentations $U \subset V$ and is called stable if this inequality is strict. Here $\mu(V) = \frac{\theta(\Dim\, V)}{\dim V} \in \mathbb{Q}$ is the slope of $V$. There are $\mathsf{GL}_d$-invariant open (possibly empty) subvarieties $R_d^{\theta \mhyphen st} \subset R_d^{\theta \mhyphen ss} \subset R_d$ of (semi)stable representations. The moduli scheme $\mathfrak{M}^{\theta \mhyphen ss}_d$ of semistable representations is the GIT quotient $R_d \git_{\theta} \mathsf{GL}_d$, the stability $\theta$ determining the linearization of the group action. The stack $\mathbf{M}_e^{\theta \mhyphen st} = [R_d^{\theta \mhyphen st} \slash \mathsf{GL}_d]$ is a $\mathbb{C}^{\times}$-gerbe over the smooth moduli scheme $\mathfrak{M}^{\theta \mhyphen st}_d$ of stable representations, which is an open subvariety of $\mathfrak{M}^{\theta \mhyphen ss}_d$.

Suppose now that $Q$ has an involution and a duality structure. In this case we will always assume that $\theta$ is $\sigma$-compatible in the sense that $\sigma^*\theta = - \theta$. A self-dual representation $M$ is called $\sigma$-semistable if $\mu(U) \leq \mu(M)$ for all isotropic subrepresentations $U \subset M$  and is called $\sigma$-stable if this inequality is strict. Note that the slope of a self-dual representation is necessarily zero. The representation theoretic notion of $\sigma$-(semi)stability agrees with the corresponding notion in GIT \cite[Theorem 3.7]{mbyoung2015}. Hence there are $\mathsf{G}_e^{\sigma}$-invariant open subschemes $R_e^{\sigma, \theta \mhyphen st} \subset R_e^{\sigma, \theta \mhyphen ss} \subset R_e^{\sigma}$ of $\sigma$-(semi)stable self-dual representations. The stack $\mathbf{M}_e^{\sigma, \theta \mhyphen st}= [R_e^{\sigma, \theta \mhyphen st} \slash \mathsf{G}_e^{\sigma}]$ is a smooth Deligne-Mumford stack. The $\sigma$-stable moduli scheme $\mathfrak{M}_e^{\sigma, \theta \mhyphen st}$ is thus an open subscheme of $\mathfrak{M}_e^{\sigma, \theta\mhyphen ss} = R_e^{\sigma} \git_{\theta} \mathsf{G}_e^{\sigma}$ with at worst finite quotient singularities. By convention we set $\mathfrak{M}_0^{\sigma, \theta \mhyphen ss} = \mathfrak{M}_0^{\sigma, \theta \mhyphen st} = \mathsf{Spec}(\mathbb{C})$.

\section{Harder-Narasimhan stratifications}
\label{sec:hnStratifications}

\subsection{The $\sigma$-HN stratification}

Let $Q$ be an arbitrary quiver with stability $\theta$. Recall that each representation $U$ admits a unique Harder-Narasimhan (HN) filtration \cite[Proposition 2.5]{reineke2003}. This is an increasing filtration $0 = U_0 \subset U_1 \subset \cdots \subset U_r = U$ with the property that the subquotients $U_1 \slash U_0, \dots, U_r \slash U_{r-1}$ are semistable and satisfy
\[
\mu(U_1 \slash U_0) > \mu(U_2 \slash U_1) > \cdots > \mu(U_r \slash U_{r-1}).
\]

Suppose that $Q$ has an involution and a duality structure and let $M$ be a self-dual representation. If $U \subset M$ is an isotropic subrepresentation, then the orthogonal $U^{\perp} \subset M$ contains $U$ as a subrepresentation and the quotient $U^{\perp} \slash U$ inherits from $M$ a canonical self-dual structure. Denote by $M \git U$ the self-dual representation $U^{\perp} \slash U$.

\begin{Prop}[{\cite[\S 3.1]{mbyoung2015}}]
\label{prop:sdStability}
Let $\theta$ be a $\sigma$-compatible stability.
\begin{enumerate}
\item A representation $U$ is semistable of slope $\mu$ if and only if $S(U)$ is semistable of slope $-\mu$.

\item A self-dual representation is $\sigma$-semistable if and only if it is semistable as an ordinary representation.

\item Each self-dual representation $M$ admits a unique $\sigma$-HN filtration, that is, an isotropic filtration
\begin{equation}
\label{eq:sdHNFiltr}
0 = U_0 \subset U_1 \subset \cdots \subset U_r \subset M
\end{equation}
such that the subquotients $U_1 \slash U_0, \dots, U_r \slash U_{r-1}$ are semistable and satisfy
\[
\mu(U_1 \slash U_0) > \mu(U_2 \slash U_1) > \cdots > \mu(U_r \slash U_{r-1}) > 0
\]
and, if non-zero, $M \git U_r$ is $\sigma$-semistable.
\end{enumerate}
\end{Prop}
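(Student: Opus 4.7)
My plan is to reduce everything to the uniqueness of the ordinary Harder--Narasimhan filtration (Reineke's result, already in use) together with the formal properties of the duality functor $S$.

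For part (1), $S$ is an exact contravariant autoequivalence of $\mathsf{Rep}_{\mathbb{C}}(Q)$ that preserves total dimension and satisfies $\theta(\Dim\, S(U)) = -\theta(\Dim\, U)$, since the dimension vector of $S(U)$ is the $\sigma$-permutation of that of $U$ and $\sigma^*\theta = -\theta$. Hence $\mu(S(U)) = -\mu(U)$. Because $S$ exchanges subrepresentations and quotient representations, I would rewrite ordinary semistability of $V$ in terms of quotients and then transport the condition through $S$ to conclude semistability of $S(V)$ of slope $-\mu$.

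For part (2), observe first that $\mu(M)=0$ for any self-dual $M$, since $\Dim\, M$ is $\sigma$-invariant and $\sigma^*\theta=-\theta$. Ordinary semistability of $M$ therefore forces $\mu(U)\leq 0$ for every subrepresentation, in particular for every isotropic one, giving $\sigma$-semistability. For the converse I would argue by contradiction. Suppose $M$ is $\sigma$-semistable but not semistable and let $U\subset M$ be its maximal destabilizing subrepresentation, so $U$ is semistable with $\mu(U)>0$. The composition
\[
U \hookrightarrow M \xrightarrow[]{\sim} S(M) \twoheadrightarrow S(U)
\]
is a morphism between semistable objects with strictly decreasing slopes $\mu(U)>0>-\mu(U)=\mu(S(U))$ (by part (1)), and so vanishes by the standard $\Hom$-vanishing for semistables. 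Unwinding, $U\subseteq\ker(M\twoheadrightarrow S(U))=U^\perp$, so $U$ is a non-trivial isotropic subrepresentation with $\mu(U)>0=\mu(M)$, contradicting $\sigma$-semistability.

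For part (3), I would begin with the ordinary HN filtration $0 = U_0 \subset U_1 \subset \cdots \subset U_s = M$ with slopes $\mu_1>\cdots>\mu_s$. Applying $S$ and transporting the resulting quotient filtration of $S(M)$ back to $M$ via the self-duality produces the filtration
\[
0 \subset U_{s-1}^\perp \subset U_{s-2}^\perp \subset \cdots \subset U_0^\perp = M,
\]
whose successive quotients are $S(U_i/U_{i-1})$, hence semistable of slope $-\mu_i$ by part (1). Uniqueness of the ordinary HN filtration then forces $U_{s-i}^\perp=U_i$ and $\mu_i+\mu_{s+1-i}=0$. Two cases arise: either $s=2r$ is even with $U_r=U_r^\perp$ and $M\git U_r=0$, or $s=2r+1$ is odd with $\mu_{r+1}=0$, $U_r^\perp=U_{r+1}$, and $M\git U_r=U_{r+1}/U_r$ semistable of slope zero, hence $\sigma$-semistable by part (2). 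In either case the truncation $0\subset U_1\subset\cdots\subset U_r$ is the desired $\sigma$-HN filtration. For uniqueness, I would take any candidate $\sigma$-HN filtration, glue its perpendicular filtration on top, check via parts (1) and (2) that the resulting full filtration of $M$ has semistable subquotients with strictly decreasing slopes, and invoke uniqueness of the ordinary HN filtration. The main obstacle I foresee is the bookkeeping identifying $S$ of a subrepresentation with a quotient via the form, i.e., $\ker(M\twoheadrightarrow S(U))=U^\perp$: this is where the symmetric isomorphism $\Theta$ and the self-dual structure must be tracked carefully.
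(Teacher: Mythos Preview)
The paper does not supply its own proof of this proposition: it is quoted from \cite[\S 3.1]{mbyoung2015} and stated without argument. Your proposal is therefore not being compared against anything in the present paper, but it is a correct and complete proof along the standard lines, and almost certainly matches the argument in the cited reference.

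A couple of minor points worth tightening. In part (3), once you obtain $U_{s-i}^{\perp}=U_i$ from uniqueness of the ordinary HN filtration, you should state explicitly why the truncated filtration $0\subset U_1\subset\cdots\subset U_r$ is \emph{isotropic}: for $i\le r$ one has $U_i^{\perp}=U_{s-i}\supseteq U_r\supseteq U_i$, so each $U_i$ is contained in its own orthogonal. Also, in the odd case $s=2r+1$ you should remark that the semistable middle piece $U_{r+1}/U_r$ inherits a self-dual structure (since $U_r^{\perp}=U_{r+1}$), so that invoking part (2) is legitimate. These are exactly the ``bookkeeping'' issues you flagged, and they go through without difficulty using $\ker(M\twoheadrightarrow S(U))=U^{\perp}$, which follows directly from the definition of the pairing on $M$ via the symmetric isomorphism $M\xrightarrow{\sim}S(M)$.
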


Using Proposition \ref{prop:sdStability} it is straightforward to show that if the $\sigma$-HN filtration of $M$ is given by \eqref{eq:sdHNFiltr}, then
\begin{equation}
\label{eq:longHNFilt}
0 = U_0 \subset U_1 \subset \cdots \subset U_r \subset U_r^{\perp} \subset \cdots \subset U_0^{\perp} = M
\end{equation}
is the HN filtration of $M$ considered as an ordinary representation, after identifying $U_r$ and $U_r^{\perp}$ if $M \git U_r$ is zero. Observe that the subquotients of the extended filtration \eqref{eq:longHNFilt} satisfy the symmetry conditions
\begin{equation}
\label{eq:dualSubQuot}
S(U_i \slash U_{i-1}) \simeq U_{i-1}^{\perp} \slash U_i^{\perp}, \qquad i=1, \dots, r.
\end{equation}

\begin{Def}
Let $r \geq 0$. A tuple $(d^{\bullet}, e^{\infty}) \in (\Lambda_Q^+)^r \times \Lambda_Q^{\sigma,+}$ is called a $\sigma$-HN type if the following conditions hold:
\begin{enumerate}
\item each $d^1, \dots, d^r$ is non-zero,
\item the slopes satisfy $\mu(d^1) > \dots > \mu(d^r) > 0$, and
\item each of $R_{d^1}^{\theta \mhyphen ss}, \dots, R_{d^r}^{\theta \mhyphen ss}$ and $R_{e^{\infty}}^{\sigma, \theta \mhyphen ss}$ is non-empty.
\end{enumerate}
The weight of $(d^{\bullet},e^{\infty})$ is defined to be $\sum_{j=1}^r (d^j + \sigma(d^j)) + e^{\infty}$.
\end{Def}

Note that $e^{\infty}$ is allowed to be zero in the above definition. We will write $\mathsf{HN}^{\sigma}(e)$ for the set of $\sigma$-HN types of weight $e \in \Lambda_Q^{\sigma,+}$. For $(d^{\bullet}, e^{\infty}) \in \mathsf{HN}^{\sigma}(e)$ denote by $R_{d^{\bullet},  e^{\infty}}^{\sigma, HN} \subset R^{\sigma}_e$ the subset of self-dual representations whose $\sigma$-HN filtration is of type $(d^{\bullet}, e^{\infty})$. Similarly, let $R_e^{\sigma,(d^{\bullet}, e^{\infty})} \subset R^{\sigma}_e$ be the subset of self-dual representations which have an isotropic filtration of type $(d^{\bullet},  e^{\infty})$. In the latter case we do not require that the subquotients of this filtration be $\sigma$-semistable. Analogous subsets $R^{HN}_{d^{\bullet}}, R_d^{d^{\bullet}} \subset R_d$ are defined for an ordinary HN type $d^{\bullet} \in \mathsf{HN}(d)$; see \cite{reineke2003}.

We need variations of two results of Reineke.

\begin{Prop}[{\textit{cf.} \cite[Proposition 3.4]{reineke2003}}]
\label{prop:sdHNStrat}
For each $e \in \Lambda_Q^{\sigma,+}$ the collection $\{R_{d^{\bullet}, e^{\infty}}^{\sigma, HN}\}_{(d^{\bullet}, e^{\infty}) \in \mathsf{HN}^{\sigma}(e)}$ defines a stratification of $R_e^{\sigma}$ by locally closed $\mathsf{G}^{\sigma}_e$-invariant smooth subschemes.
\end{Prop}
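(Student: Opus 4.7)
The plan is to reduce to Reineke's result for ordinary HN strata \cite[Proposition 3.4]{reineke2003} for disjointness and local closedness, and then to establish smoothness via an associated bundle model.

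Disjointness and $\mathsf{G}_e^{\sigma}$-invariance of the collection $\{R_{d^{\bullet}, e^{\infty}}^{\sigma, HN}\}$ are immediate from Proposition \ref{prop:sdStability}(3): every self-dual representation admits a unique $\sigma$-HN filtration, and isometries preserve isotropy, semistability, and isomorphism classes of subquotients. For local closedness, I associate to each $(d^{\bullet}, e^{\infty}) \in \mathsf{HN}^{\sigma}(e)$ the ordinary HN type
\[
\tilde{d}^{\bullet} = (d^1, \dots, d^r, e^{\infty}, \sigma(d^r), \dots, \sigma(d^1)),
\]
with the middle entry omitted when $e^{\infty}=0$. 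The $\sigma$-compatibility of $\theta$ together with Proposition \ref{prop:sdStability}(1)--(2) ensures that the slope sequence is strictly decreasing and that each semistable locus is non-empty, so $\tilde{d}^{\bullet} \in \mathsf{HN}(e)$. Combining the extended filtration \eqref{eq:longHNFilt} with the uniqueness of the ordinary HN filtration yields the identification
\[
R_{d^{\bullet}, e^{\infty}}^{\sigma, HN} = R_e^{\sigma} \cap R_{\tilde{d}^{\bullet}}^{HN},
\]
and Reineke's theorem produces local closedness in $R_e^{\sigma}$.

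For smoothness, let $\mathsf{Fl}^{\sigma}_{d^{\bullet}, e^{\infty}}$ denote the variety of isotropic flags $0 \subset V_1 \subset \cdots \subset V_r \subset \mathbb{C}^e$ of type $(d^{\bullet}, e^{\infty})$, which is a $\mathsf{G}_e^{\sigma}$-homogeneous space and therefore smooth. Let $\mathsf{P} \subset \mathsf{G}_e^{\sigma}$ be the stabilizer of a reference flag and let $F \subset R_e^{\sigma}$ be the closed $\mathsf{P}$-stable subvariety of self-dual representations for which this flag is a subrepresentation filtration. Block-decomposing structure maps with respect to the flag and its orthogonal, and imposing the self-duality constraint \eqref{eq:strSymm} and the symmetry \eqref{eq:dualSubQuot}, realizes $F$ as a $\mathsf{P}$-equivariant affine bundle over the diagonal component $\prod_{j=1}^r R_{d^j} \times R_{e^{\infty}}^{\sigma}$. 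Writing $F^{\mathrm{ss}}$ for the preimage of the open subvariety $\prod_j R_{d^j}^{\theta \mhyphen ss} \times R_{e^{\infty}}^{\sigma, \theta \mhyphen ss}$, uniqueness of the $\sigma$-HN filtration identifies the associated bundle $\mathsf{G}_e^{\sigma} \times^{\mathsf{P}} F^{\mathrm{ss}}$ with $R_{d^{\bullet}, e^{\infty}}^{\sigma, HN}$. Since $F^{\mathrm{ss}}$ and $\mathsf{G}_e^{\sigma}/\mathsf{P}$ are smooth, the stratum is smooth.

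The main technical point is verifying the affine bundle structure on $F$: the self-duality condition pairs strictly upper and strictly lower triangular blocks of filtration-preserving structure maps, and one must treat arrows in $Q_1^+$ separately from fixed arrows in $Q_1^{\sigma}$, and nodes in $Q_0^+$ separately from those in $Q_0^{\sigma}$. This is routine finite-dimensional linear algebra once the block decomposition compatible with the flag and its orthogonal has been set up, but care is needed to correctly isolate the diagonal subquotient data from the extension data inside $F$.
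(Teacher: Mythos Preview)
Your proof is correct, and your smoothness argument via the associated bundle $\mathsf{G}_e^{\sigma} \times^{\mathsf{P}} F^{\mathrm{ss}} \xrightarrow{\sim} R_{d^{\bullet}, e^{\infty}}^{\sigma, HN}$ is exactly what the paper does (its $R^{\sigma,\theta\mhyphen ss}_{d^{\bullet},e^{\infty}}$ is your $F^{\mathrm{ss}}$, and its $\mathsf{G}^{\sigma}_{d^{\bullet},e^{\infty}}$ is your $\mathsf{P}$); the paper even records this isomorphism as equation \eqref{eq:hnStrataIsom} for later use.

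Where you genuinely differ is in establishing local closedness. You reduce to Reineke by the clean identity $R_{d^{\bullet}, e^{\infty}}^{\sigma, HN} = R_e^{\sigma} \cap R_{\tilde{d}^{\bullet}}^{HN}$, exploiting the relation between the $\sigma$-HN filtration and the ordinary HN filtration \eqref{eq:longHNFilt}. The paper instead argues directly: it observes that $\mathsf{G}_e^{\sigma}/\mathsf{G}^{\sigma}_{d^{\bullet},e^{\infty}}$ is projective, so the action map $m^{\sigma}: \mathsf{G}_e^{\sigma} \times_{\mathsf{G}^{\sigma}_{d^{\bullet},e^{\infty}}} R^{\sigma}_{d^{\bullet},e^{\infty}} \to R_e^{\sigma}$ is proper, whence its image $R_e^{\sigma,(d^{\bullet},e^{\infty})}$ is closed; the HN stratum is then open in this closed set. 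Your route is slicker and makes the link to the ordinary stratification transparent. The paper's route buys something extra that is used downstream: the closedness of $R_e^{\sigma,(d^{\bullet},e^{\infty})}$ (and in fact the equality $\overline{R_{d^{\bullet}, e^{\infty}}^{\sigma, HN}} = R_e^{\sigma,(d^{\bullet},e^{\infty})}$) is invoked in the proof of Proposition~\ref{prop:sdHNStratClose}. Your construction contains the ingredients for this too (properness of $\mathsf{G}_e^{\sigma} \times^{\mathsf{P}} F \to R_e^{\sigma}$), but you would have to spell it out if you want Proposition~\ref{prop:sdHNStratClose} to go through unchanged.
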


\begin{proof}
The argument is nearly the same as \cite{reineke2003}; we give it here for completeness. Let $(d^{\bullet}, e^{\infty}) \in \mathsf{HN}^{\sigma}(e)$ and let $\mathbb{C}^e$ be the trivial self-dual representation of dimension vector $e$. Let
\[
0 = E_0 \subset E_1 \subset \cdots \subset E_r \subset \mathbb{C}^e
\]
be a $Q_0$-graded isotropic filtration whose subquotients have dimension vector type $(d^{\bullet} , e^{\infty})$. Let $R^{\sigma}_{d^{\bullet}, e^{\infty}} \subset R_e^{\sigma}$ be the closed subscheme of self-dual representations which preserve $E_{\bullet} \subset \mathbb{C}^e$. The natural map
\[
\pi^{\sigma}: R^{\sigma}_{d^{\bullet}, e^{\infty}} \rightarrow R_{d^1} \times \cdots \times R_{d^r} \times R_{e^{\infty}}^{\sigma}
\]
is a trivial vector bundle. The preimage
\[
R^{\sigma, \theta \mhyphen ss}_{d^{\bullet}, e^{\infty}} = (\pi^{\sigma})^{-1} (R^{\theta \mhyphen ss}_{d^1} \times \cdots \times R^{\theta \mhyphen ss}_{d^r} \times R_{e^{\infty}}^{\sigma, \theta \mhyphen ss})
\]
is open in $R^{\sigma}_{d^{\bullet}, e^{\infty}}$. Let $\mathsf{G}_{d^{\bullet} , e^{\infty}}^{\sigma} \subset \mathsf{G}_e^{\sigma}$ be the parabolic subgroup which stabilizes $E_{\bullet} \subset \mathbb{C}^e$. Both $R^{\sigma}_{d^{\bullet}, e^{\infty}}$ and $R^{\sigma, \theta \mhyphen ss}_{d^{\bullet}, e^{\infty}}$ are $\mathsf{G}^{\sigma}_{d^{\bullet} , e^{\infty}}$-invariant subschemes of $R_e^{\sigma}$. Since the quotient $\mathsf{G}_e^{\sigma} \slash \mathsf{G}_{d^{\bullet} , e^{\infty}}^{\sigma}$ is projective, the action map
\[
m^{\sigma}: \mathsf{G}_e^{\sigma} \times_{\mathsf{G}_{d^{\bullet} , e^{\infty}}^{\sigma}} R^{\sigma}_{d^{\bullet}, e^{\infty}} \rightarrow R_e^{\sigma}
\]
is proper. It follows that the image of $m^{\sigma}$, which equals $R_e^{\sigma, (d^{\bullet}, e^{\infty})}$, is a closed subscheme of $R_e^{\sigma}$. The uniqueness of $\sigma$-HN filtrations implies that the restriction of $m^{\sigma}$ to $\mathsf{G}_e^{\sigma} \times_{\mathsf{G}_{d^{\bullet} , e^{\infty}}^{\sigma}} R^{\sigma, \theta \mhyphen ss}_{d^{\bullet}, e^{\infty}}$ defines an isomorphism
\begin{equation}
\label{eq:hnStrataIsom}
\mathsf{G}_e^{\sigma} \times_{\mathsf{G}_{d^{\bullet} , e^{\infty}}^{\sigma}} R^{\sigma, \theta \mhyphen ss}_{d^{\bullet}, e^{\infty}} \xrightarrow[]{\sim} R_{d^{\bullet} , e^{\infty}}^{\sigma, HN}.
\end{equation}
In particular, $R_{d^{\bullet} , e^{\infty}}^{\sigma, HN}$ is a smooth open subscheme of $R_e^{\sigma, (d^{\bullet}, e^{\infty})}$.
\end{proof}

Unlike the case of ordinary quiver representations, the $\sigma$-HN strata $R_{d^{\bullet} , e^{\infty}}^{\sigma, HN}$ need not be connected. This is because the isotropic flag variety $\mathsf{G}_e^{\sigma} \slash \mathsf{G}_{d^{\bullet} , e^{\infty}}^{\sigma}$ need not be irreducible. More precisely, $\mathsf{G}_e^{\sigma} \slash \mathsf{G}_{d^{\bullet} , e^{\infty}}^{\sigma}$ fails to be connected if and only if one of its factors is a flag variety for an orthogonal group $\mathsf{O}_{2n}(\mathbb{C})$ which parameterizes isotropic flags whose largest subspace is Lagrangian; such a flag variety has two irreducible components. The following example illustrates this behaviour.

\begin{Ex}
Let $Q$ be the quiver $\begin{tikzpicture}[thick,scale=.33,decoration={markings,mark=at position 0.6 with {\arrow{>}}}]
\draw[postaction={decorate}] (0,0) to  (2,0);
\draw[postaction={decorate}] (2,0) to  (4,0);
\fill (0,0) circle (4pt);
\fill (2,0) circle (4pt);
\fill (4,0) circle (4pt);
\end{tikzpicture}$ with its unique involution. Fix the duality structure $s=1$ and $\tau=-1$  (identically) and the stability $\theta=(1,0,-1)$. Let $e= (1,2,1)$ and consider the $\sigma$-HN type $((1,1,0), 0) \in \mathsf{HN}^{\sigma}(e)$. Then the $\sigma$-HN stratum $R_{(1,1,0), 0}^{\sigma, HN}$ consists of all self-dual representations of the form
\[
\mathbb{C} \xrightarrow[]{\left( \begin{smallmatrix} \lambda \\ 0 \end{smallmatrix} \right)} \mathbb{C}^2 \xrightarrow[]{\left( \begin{smallmatrix} 0 & - \lambda \end{smallmatrix} \right)} \mathbb{C} \qquad \mbox{ or } \qquad
\mathbb{C} \xrightarrow[]{\left( \begin{smallmatrix} 0 \\ \lambda \end{smallmatrix} \right)} \mathbb{C}^2 \xrightarrow[]{\left( \begin{smallmatrix} - \lambda & 0 \end{smallmatrix} \right)} \mathbb{C}
\]
for some $\lambda \in \mathbb{C}^{\times}$, where the vector space $\mathbb{C}^2$ attached to the middle node has ordered basis $\{x,y\}$ with symmetric bilinear form determined by
\[
\langle x,x\rangle = 0, \qquad \langle x, y \rangle =1, \qquad \langle y, y \rangle = 0.
\]
It follows that $R_{(1,1,0), 0}^{\sigma, HN}$ is a $\mathsf{O}_2(\mathbb{C})$-torsor and thus has two irreducible components.
\end{Ex}

For each $d^{\bullet} \in (\Lambda^+_Q)^r$ let $\mathcal{P}(d^{\bullet})$ be the polygon in $\mathbb{Z}_{\geq 0} \times \mathbb{Z}$ with vertices
\[
(\dim \sum_{i=1}^k  d^i, \theta (\sum_{i=1}^k d^i)), \qquad k =0, \dots, r.
\]
Following \cite{reineke2003}, define a partial order on tuples of $\Lambda_Q^+$ by $d^{\bullet} \leq d^{\prime \bullet}$ if $\mathcal{P}(d^{\bullet})$ lies on or below $\mathcal{P}(d^{\prime \bullet})$. Note that the polygon associated to a HN type is convex. Similarly, for a $\sigma$-HN type $(d^1, \dots, d^r, e^{\infty}) \in (\Lambda_Q^+)^r \times \Lambda_Q^{\sigma,+}$ we define $\mathcal{P}(d^\bullet,e^\infty)$ to be the polygon attached to the HN type $(d^1, \dots, d^r, e^{\infty}, \sigma(d^r), \dots, \sigma (d^1))$.
The polygon attached to a $\sigma$-HN type has a vertical reflection symmetry and lies in the subset $\mathbb{Z}_{\geq 0 } \times \mathbb{Z}_{\geq 0}$.

\begin{Prop}[{\textit{cf.} \cite[Proposition 3.7]{reineke2003}}]
\label{prop:sdHNStratClose}
Let $e \in \Lambda_Q^{\sigma,+}$. For each $(d^{\bullet}, e^{\infty}) \in \mathsf{HN}^{\sigma}(e)$ we have
\[
\overline{R_{d^{\bullet}, e^{\infty}}^{\sigma, HN}} \subset \bigcup_{\substack{(d^{ \prime \bullet}, e^{ \prime \infty}) \in \mathsf{HN}^{\sigma}(e) \\ (d^{\bullet}, e^{\infty}) \leq (d^{\prime \bullet} , e^{\prime \infty})}} R_{d^{ \prime \bullet}, e^{ \prime \infty}}^{\sigma, HN}.
\]
\end{Prop}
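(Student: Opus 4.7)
The plan is to reduce the statement to its analogue for ordinary quiver representations, namely \cite[Proposition 3.7]{reineke2003}. The bridge between the two settings is the observation, implicit in \eqref{eq:longHNFilt} and used in the very definition of $\mathcal{P}(d^{\bullet}, e^{\infty})$, that the $\sigma$-HN filtration of a self-dual representation $M$ is nothing but the ordinary HN filtration of $M$ regarded as an underlying representation. In particular, the $\sigma$-HN polygon of $M$ is equal to its ordinary HN polygon.

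To make this precise, for each $(d^{\bullet}, e^{\infty}) \in \mathsf{HN}^{\sigma}(e)$ I would form the associated ordinary HN type
\[
\tilde{d}^{\bullet} = (d^1, \dots, d^r, e^{\infty}, \sigma(d^r), \dots, \sigma(d^1)) \in \mathsf{HN}(e),
\]
with the understanding that $e^{\infty}$ is omitted if it is zero (the required strict inequalities then follow from $\mu(d^r) > 0$, which is built into the definition of a $\sigma$-HN type). By Proposition \ref{prop:sdStability}(3) and \eqref{eq:longHNFilt}, the inclusion $R_e^{\sigma} \hookrightarrow R_e$ carries the stratum $R_{d^{\bullet}, e^{\infty}}^{\sigma, HN}$ into the ordinary HN stratum $R_{\tilde{d}^{\bullet}}^{HN}$, and by construction $\mathcal{P}(d^{\bullet}, e^{\infty}) = \mathcal{P}(\tilde{d}^{\bullet})$.

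Now let $M$ lie in the closure of $R_{d^{\bullet}, e^{\infty}}^{\sigma, HN}$ inside $R_e^{\sigma}$, and let $(d^{\prime \bullet}, e^{\prime \infty}) \in \mathsf{HN}^{\sigma}(e)$ denote its $\sigma$-HN type. A fortiori $M$ lies in the closure of $R_{\tilde{d}^{\bullet}}^{HN}$ inside $R_e$, so Reineke's \cite[Proposition 3.7]{reineke2003} yields $\mathcal{P}(\tilde{d}^{\bullet}) \leq \mathcal{P}(\tilde{d}^{\prime \bullet})$, where $\tilde{d}^{\prime \bullet}$ is the ordinary HN type of $M$. Because $M$ is self-dual, the uniqueness of its HN filtration combined with \eqref{eq:dualSubQuot} forces $\tilde{d}^{\prime \bullet}$ to be the ordinary HN type associated to $(d^{\prime \bullet}, e^{\prime \infty})$ in the sense above; hence
\[
\mathcal{P}(d^{\bullet}, e^{\infty}) = \mathcal{P}(\tilde{d}^{\bullet}) \leq \mathcal{P}(\tilde{d}^{\prime \bullet}) = \mathcal{P}(d^{\prime \bullet}, e^{\prime \infty}),
\]
i.e.\ $(d^{\bullet}, e^{\infty}) \leq (d^{\prime \bullet}, e^{\prime \infty})$, which is the desired inclusion.

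There is no serious obstacle here; the argument is essentially bookkeeping once the identification of the $\sigma$-HN filtration of $M$ with the (symmetric) ordinary HN filtration of its underlying representation is in place. The only point requiring mild care is the boundary case $e^{\infty} = 0$, where one must check that the collapsed filtration $(d^1, \dots, d^r, \sigma(d^r), \dots, \sigma(d^1))$ still has strictly decreasing slopes, a fact which is guaranteed by the strict positivity $\mu(d^r) > 0$ in the definition of $\mathsf{HN}^{\sigma}(e)$.
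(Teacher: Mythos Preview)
Your argument is correct. Both your proof and the paper's reduce the statement to Reineke's result, but the packaging differs slightly: the paper extracts from \cite[Proposition~3.7]{reineke2003} the underlying lemma that every subrepresentation of $X$ gives a point on or below $\mathcal{P}(c^{\bullet})$, applies it to isotropic subrepresentations of a self-dual $M$ admitting an isotropic filtration of type $(d^{\bullet},e^{\infty})$, and then invokes the identification $\overline{R_{d^{\bullet}, e^{\infty}}^{\sigma, HN}} = R_e^{\sigma,(d^{\bullet}, e^{\infty})}$ from Proposition~\ref{prop:sdHNStrat}. You instead use Reineke's proposition as a black box via the closed embedding $R_e^{\sigma}\hookrightarrow R_e$ together with the identification of the $\sigma$-HN filtration with the ordinary one from \eqref{eq:longHNFilt}. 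Your route is arguably cleaner in that it does not need the closure identification; the paper's route has the side benefit of establishing the inclusion for the larger set $R_e^{\sigma,(d^{\bullet}, e^{\infty})}$.
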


\begin{proof}
It is shown in the proof of \cite[Proposition 3.7]{reineke2003} that if $X$ is a representation with HN type $c^{\bullet}$ and $U \subset X$ is a subrepresentation, then the point $(\dim U, \theta(U))$ lies on or below $\mathcal{P}(c^{\bullet})$. In particular, this applies if $X$ is self-dual and $U$ is isotropic. We conclude that
\[
R_e^{\sigma,(d^{\bullet}, e^{\infty})} \subset \bigcup_{\substack{(d^{ \prime \bullet}, e^{ \prime \infty}) \in \mathsf{HN}^{\sigma}(e) \\ (d^{\bullet}, e^{\infty}) \leq (d^{\prime \bullet} , e^{\prime \infty})}} R_{d^{ \prime \bullet}, e^{ \prime \infty}}^{\sigma, HN}.
\]
But $\overline{R_{d^{\bullet}, e^{\infty}}^{\sigma, HN}} = R_e^{\sigma,(d^{\bullet}, e^{\infty})}$, as follows from the proof of Proposition \ref{prop:sdHNStrat}.
\end{proof}

\subsection{Equivariant perfection of the $\sigma$-HN stratification}

We show that the $\sigma$-HN filtration of $R_e^{\sigma}$ associated to a $\sigma$-compatible stability $\theta$ is $\mathsf{G}_e^{\sigma}$-equivariantly perfect in the sense of \cite{atiyah1983}, \cite[\S 2]{kirwan1984}. See \cite[\S 4]{harada2011}, \cite[\S 4]{brion2012b} for analogous results in the setting of ordinary quiver representations.

Let $X$ be a smooth connected complex algebraic variety with an action of a reductive algebraic group $\mathsf{G}$. Suppose that $\{S_i\}_{i=0}^N$ is a disjoint collection of locally closed $\mathsf{G}$-invariant smooth subschemes of $X$ such that $\bigsqcup_{i=0}^N S_i = X$ and
\[
\overline{S}_i \subset \bigcup_{ i \leq j} S_j
\]
for each $i =0, \dots, N$. Assume that each connected component of $S_i$ has complex codimension $\delta_i$ in $X$ and write $\nu_i \rightarrow S_i$ for the normal bundle of $S_i \subset X$. For each $i$ there is an associated long exact sequence of mixed Hodge structures, the equivariant Thom-Gysin sequence:
\begin{equation}
\label{eq:thomGysin}
\cdots \rightarrow H_{\mathsf{G}}^{k-2 \delta_i}(S_i)(-\delta_i) \rightarrow H_{\mathsf{G}}^k (\bigcup_{ i \leq j} S_j) \rightarrow H_{\mathsf{G}}^k(\bigcup_{i < j} S_j) \rightarrow H_{\mathsf{G}}^{k-2 \delta_i+1}(S_i)(-\delta_i) \rightarrow \cdots.
\end{equation}
Here $(-\delta_i) = - \otimes \mathbb{Q}(-1)^{\otimes \delta_i}$ with $\mathbb{Q}(-1)$ the Tate Hodge structure of weight $2$. It is shown in \cite{atiyah1983} that if each of the equivariant Euler classes $\mathsf{eu}_{\mathsf{G}}(\nu_i) \in H^{\bullet}_{\mathsf{G}}(S_i)$, $i=0, \dots, N$, is not a zero divisor, then the long exact sequence \eqref{eq:thomGysin} breaks into short exact sequences
\begin{equation}
\label{eq:thomShort}
0
\rightarrow H_{\mathsf{G}}^{k-2 \delta_i}(S_i)(-\delta_i) \rightarrow H_{\mathsf{G}}^k (\bigcup_{i \leq j} S_j) \rightarrow H_{\mathsf{G}}^k(\bigcup_{i < j} S_j) \rightarrow 0.
\end{equation}
In this situation the stratification $\{S_i\}_{i=0}^N$ is called $\mathsf{G}$-equivariantly perfect.

\begin{Lem}
\label{lem:generalOddVanishing}
Suppose that $\{S_i\}_{i=0}^N$ is a $\mathsf{G}$-equivariantly perfect stratification of $X$. If $H^{2k+1}_{\mathsf{G}}(X)=0$ and $H^{2k}_{\mathsf{G}}(X)$ is pure of Hodge-Tate type for all $k \in \mathbb{Z}$, then the same is true for $H^{\bullet}_{\mathsf{G}}(S_i)$, $i=0, \dots, N$.
\end{Lem}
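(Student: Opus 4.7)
The plan is to proceed by induction on $i$, proving that each $H^\bullet_{\mathsf{G}}(X_i)$, where $X_i := \bigcup_{j \geq i} S_j$, has vanishing odd equivariant cohomology and even equivariant cohomology pure of Hodge-Tate type. Since $X_{i+1}$ is closed in $X_i$ with open complement $S_i$, the short exact sequence \eqref{eq:thomShort} can be rewritten as
\[
0 \to H^{k-2\delta_i}_{\mathsf{G}}(S_i)(-\delta_i) \to H^k_{\mathsf{G}}(X_i) \to H^k_{\mathsf{G}}(X_{i+1}) \to 0,
\]
so the statement for each individual stratum $S_i$ falls out as a byproduct of the induction.

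The base case $i=0$ is exactly the hypothesis on $X_0 = X$. For the inductive step I would assume the claim for $X_i$ and read off the conclusion from the sequence above. If $k$ is odd, the middle term vanishes, forcing both $H^{k-2\delta_i}_{\mathsf{G}}(S_i)$ (noting $k-2\delta_i$ is again odd) and $H^k_{\mathsf{G}}(X_{i+1})$ to be zero. If $k$ is even, the middle term is pure of Hodge-Tate type, and since purity and the Hodge-Tate property pass to both sub- and quotient mixed Hodge structures, both $H^{k-2\delta_i}_{\mathsf{G}}(S_i)(-\delta_i)$ and $H^k_{\mathsf{G}}(X_{i+1})$ inherit them. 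Untwisting by $\mathbb{Q}(-\delta_i)$ changes the weight by $2\delta_i$ but preserves the Hodge-Tate type, so $H^{k-2\delta_i}_{\mathsf{G}}(S_i)$ itself is pure of Hodge-Tate type. This establishes the claim for both $S_i$ and $X_{i+1}$, and the induction terminates at $i = N$.

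I do not expect any substantive obstacle: the lemma is formal once one has the equivariantly perfect short exact sequences \eqref{eq:thomShort} in hand, together with the elementary fact that sub- and quotient mixed Hodge structures of a pure Hodge-Tate structure are again pure Hodge-Tate of the same weight. The only mild point of care is that a stratum $S_i$ may be disconnected, but since $\delta_i$ is constant across its components by hypothesis and the short exact sequence decomposes accordingly, this causes no trouble.
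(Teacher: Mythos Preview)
Your proposal is correct and takes essentially the same approach as the paper, which simply says ``This follows by induction on $i$ using the short exact sequences \eqref{eq:thomShort}.'' You have filled in the details of that induction accurately, including the Hodge-theoretic observations that sub- and quotient mixed Hodge structures of a pure Hodge--Tate structure remain pure Hodge--Tate, and that the Tate twist only shifts the weight.
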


\begin{proof}
This follows by induction on $i$ using the short exact sequences \eqref{eq:thomShort}.
\end{proof}

\begin{Prop}
\label{prop:noOddCohom}
Let $e \in \Lambda_Q^{\sigma,+}$ and $(d^{\bullet}, e^{\infty}) \in \mathsf{HN}^{\sigma}(e)$. Then $H^{2k+1}_{\mathsf{G}_e^{\sigma}}(R_{d^{\bullet}, e^{\infty}}^{\sigma, HN})=0$ and $H^{2k}_{\mathsf{G}_e^{\sigma}}(R_{d^{\bullet}, e^{\infty}}^{\sigma, HN})$ is pure of Hodge-Tate type for all $k \in \mathbb{Z}$.
\end{Prop}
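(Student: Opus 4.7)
The natural plan is to apply Lemma~\ref{lem:generalOddVanishing} with $X = R_e^{\sigma}$ and the $\sigma$-HN stratification, so that I need to verify (i) that $H^{2k+1}_{\mathsf{G}_e^{\sigma}}(R_e^{\sigma}) = 0$ and $H^{2k}_{\mathsf{G}_e^{\sigma}}(R_e^{\sigma})$ is pure of Hodge-Tate type for all $k$, and (ii) that the stratification is $\mathsf{G}_e^{\sigma}$-equivariantly perfect.

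Point (i) is immediate. The variety $R_e^{\sigma}$ is an affine space with linear $\mathsf{G}_e^{\sigma}$-action, hence $H^*_{\mathsf{G}_e^{\sigma}}(R_e^{\sigma}) \simeq H^*(B\mathsf{G}_e^{\sigma};\mathbb{Q})$. Since $\mathsf{G}_e^{\sigma}$ is a product of general linear, orthogonal, and symplectic groups, this cohomology is a polynomial algebra on Chern and Pontryagin classes, concentrated in even degrees and pure of Hodge-Tate type. The ingredients required by Lemma~\ref{lem:generalOddVanishing}---a stratification by locally closed, smooth, $\mathsf{G}_e^{\sigma}$-invariant subschemes with the nestedness $\overline{S_i} \subset \bigcup_{j \geq i} S_j$---are then supplied by Propositions~\ref{prop:sdHNStrat} and~\ref{prop:sdHNStratClose} after refining the partial order $\leq$ on $\mathsf{HN}^{\sigma}(e)$ to a total order.

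For (ii), I would use the isomorphism~\eqref{eq:hnStrataIsom} to identify the $\mathsf{G}_e^{\sigma}$-equivariant Euler class of the normal bundle of $R_{d^{\bullet}, e^{\infty}}^{\sigma, HN}$ with the $\mathsf{G}^{\sigma}_{d^{\bullet}, e^{\infty}}$-equivariant Euler class of the Levi representation $N = R_e^{\sigma} / R^{\sigma}_{d^{\bullet}, e^{\infty}}$, where $\mathsf{L} = \prod_{j=1}^r \mathsf{GL}_{d^j} \times \mathsf{G}^{\sigma}_{e^{\infty}}$ is the Levi of $\mathsf{G}^{\sigma}_{d^{\bullet}, e^{\infty}}$. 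I would then construct a one-parameter subgroup $\lambda \colon \mathbb{C}^{\times} \to Z(\mathsf{L})$ of the form $\lambda(t) = (t^{k_1}, \ldots, t^{k_r}, 1)$ with integers $k_1 < k_2 < \cdots < k_r < 0$. These put weights $k_1, \ldots, k_r, 0, -k_r, \ldots, -k_1$ on the successive quotients of the extended flag $0 \subset E_1 \subset \cdots \subset E_r \subset E_r^{\perp} \subset \cdots \subset \mathbb{C}^e$, a strictly increasing sequence. Every component of $N$ is a piece-map $F_a \to F_b$ whose target has strictly higher position than its source in the extended flag, so every $\lambda$-weight on $N$ is strictly positive. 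The equivariant Euler class therefore restricts via $\lambda$ to a nonzero monomial in $H^*_{\mathbb{C}^{\times}}(\mathrm{pt}) = \mathbb{Q}[u]$, which is the standard Atiyah-Bott sufficient condition for it to be a non-zero-divisor. With equivariant perfection in hand, Lemma~\ref{lem:generalOddVanishing} yields the conclusion.

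The main obstacle is the construction and verification of $\lambda$. In the self-dual setting $N$ contains, besides the familiar off-diagonal pieces $F_a \to F_b$ coming from arrows in $Q_1^+$, also the mixed pieces $F_a \to F_b^*$ arising from the bilinear-form components attached to arrows in $Q_1^{\sigma}$, along with all pieces involving the self-dual summand $F_{\infty}$ or the dual pieces $F_j^*$. A single central 1-ps of $\mathsf{L}$ must separate every such weight from zero with the correct sign. The crucial observation is the self-dual compatibility $w(F_j^*) = -w(F_j)$: combined with the choice of $k_j$ negative and strictly increasing, it extends the weight assignment to a strictly increasing function on all positions of the extended flag, producing the required uniform positivity of $\lambda$-weights on $N$.
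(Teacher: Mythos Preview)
Your strategy is exactly the paper's: apply Lemma~\ref{lem:generalOddVanishing} to the $\sigma$-HN stratification, observe that $H^{\bullet}_{\mathsf{G}_e^{\sigma}}(R_e^{\sigma})\simeq H^{\bullet}_{\mathsf{G}_e^{\sigma}}$ is even and Hodge--Tate, and verify equivariant perfection via the Atiyah--Bott criterion using the central torus of the Levi. The paper uses the full rank-$r$ diagonal torus $\mathsf{T}\subset\prod_j\mathsf{GL}_{d^j}$, identifies the normal-bundle fibre at a split point $M=\bigoplus_j H(V_j)\oplus N$ with a sum of $\mathrm{Ext}^1$-groups, and records the $\mathsf{T}$-weights $t_j-t_i$, $-t_i$, $-t_j-t_i$; your one-parameter subgroup with $k_1<\cdots<k_r<0$ is just the specialization $t_j\mapsto k_j$ that makes every such weight positive.

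One point needs correcting. The normal bundle of $R_{d^{\bullet},e^{\infty}}^{\sigma,HN}$ is \emph{not} $R_e^{\sigma}/R^{\sigma}_{d^{\bullet},e^{\infty}}$: because the stratum is $\mathsf{G}_e^{\sigma}\times_{\mathsf{G}^{\sigma}_{d^{\bullet},e^{\infty}}}R^{\sigma,\theta\mhyphen ss}_{d^{\bullet},e^{\infty}}$, its tangent space at $m$ also contains the image of $\mathfrak{g}_e^{\sigma}/\mathfrak{g}^{\sigma}_{d^{\bullet},e^{\infty}}$ under the infinitesimal action $\xi\mapsto\xi\cdot m$, and the true normal fibre is the further quotient by this image---precisely the $\mathrm{Ext}^1$ description the paper writes down. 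Your conclusion survives unchanged, since the genuine normal bundle is an $\mathsf{L}$-equivariant quotient of your $N$ and hence still has only strictly positive $\lambda$-weights; but the Euler classes of the two bundles differ, so the identification as stated is false.
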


\begin{proof}
We will apply Lemma \ref{lem:generalOddVanishing} with $X= R_e^{\sigma}$, $\mathsf{G} = \mathsf{G}_e^{\sigma}$ and the stratification $\{R_{d^{\bullet}, e^{\infty}}^{\sigma, HN} \}_{(d^{\bullet}, e^{\infty}) \in \mathsf{HN}^{\sigma}(e)}$ of Proposition \ref{prop:sdHNStrat} ordered as in Proposition \ref{prop:sdHNStratClose}. Note that $H^{\bullet}_{\mathsf{G}_e^{\sigma}} (R_e^{\sigma}) \simeq H^{\bullet}_{\mathsf{G}_e^{\sigma}}$ vanishes in odd degree and is of Hodge-Tate type in degree $2k$. Let $\nu_{d^{\bullet},e^{\infty}} \rightarrow R_{d^{\bullet},e^{\infty}}^{\sigma,HN}$ be the normal bundle of $R_{d^{\bullet},e^{\infty}}^{\sigma,HN} \subset R_e^{\sigma}$. We need to prove that $
\mathsf{eu}_{\mathsf{G}^{\sigma}_e}(\nu_{d^{\bullet}, e^{\infty}}) \in H^{\bullet}_{\mathsf{G}^{\sigma}_e}(R_{d^{\bullet},e^{\infty}}^{\sigma,HN})$ is not a zero divisor. The isomorphism \eqref{eq:hnStrataIsom} induces isomorphisms
\begin{eqnarray*}
H^{\bullet}_{\mathsf{G}^{\sigma}_e}(R_{d^{\bullet},e^{\infty}}^{\sigma, HN}) & \simeq & H_{\mathsf{G}^{\sigma}_{d^{\bullet},e^{\infty}}}^{\bullet}(R^{\sigma, \theta \mhyphen ss}_{d^{\bullet},e^{\infty}}) \\
& \simeq & H_{\mathsf{GL}_{d^1} \times \cdots \times \mathsf{GL}_{d^r} \times \mathsf{G}^{\sigma}_{e^{\infty}}}^{\bullet}(R^{\theta \mhyphen ss}_{d^1} \times \cdots \times R^{\theta \mhyphen ss}_{d^r} \times R_{e^{\infty}}^{\sigma, \theta \mhyphen ss}).
\end{eqnarray*}
Denote by $\tilde{\nu}_{d^{\bullet},e^{\infty}}$ the restriction of $\nu_{d^{\bullet},e^{\infty}}$ to $R^{\theta \mhyphen ss}_{d^1} \times \cdots \times R^{\theta \mhyphen ss}_{d^r} \times R_{e^{\infty}}^{\sigma, \theta \mhyphen ss}$. Then $\mathsf{eu}_{\mathsf{G}^{\sigma}_e}(\nu_{d^{\bullet}, e^{\infty}})$ is mapped to $\mathsf{eu}_{\mathsf{GL}_{d^1} \times \cdots \times \mathsf{GL}_{d^r} \times \mathsf{G}^{\sigma}_{e^{\infty}}}(\tilde{\nu}_{d^{\bullet},e^{\infty}})$ under the above isomorphisms. Fix $V_j \in R_{d^j}^{\theta \mhyphen ss}$, $j=1, \dots, r$, and $N \in R^{\sigma, \theta \mhyphen ss}_{e^{\infty}}$ and consider the orthogonal direct sum self-dual representation
\[
M = \bigoplus_{j=1}^r H(V_j) \oplus N \in R_e^{\sigma}.
\]
Here $H(V) = V \oplus S(V)$ is the hyperbolic self-dual representation on $V$. By using the isomorphisms \eqref{eq:dualSubQuot}, the fibre of $\tilde{\nu}_{d^{\bullet},e^{\infty}}$ over $M$ is naturally identified with the subspace of
\[
\bigoplus_{1 \leq i < j \leq r} \Ext^1(V_i, V_j) \oplus \bigoplus_{i=1}^r \Ext^1(V_i, N) \oplus \bigoplus_{1 \leq i \leq j \leq r} \Ext^1(V_i, S(V_j))
\]
consisting of elements whose component from $\oplus_{1 \leq i \leq j \leq r} \Ext^1(V_i, S(V_j))$ is fixed by the involution $S$.

Let $\mathsf{T} \simeq (\mathbb{C}^{\times})^r$ be the diagonal torus
\[
\mathsf{T} \subset \prod_{j=1}^r \mathsf{GL}_{d^j} \subset \prod_{j=1}^r \mathsf{GL}_{d^j} \times \mathsf{G}_{e^{\infty}}^{\sigma}.
\]
Then $\mathsf{T}$ acts trivially on $R_{d^1}^{\theta \mhyphen ss} \times \cdots \times R_{d^r}^{\theta \mhyphen ss}  \times R_{e^{\infty}}^{\sigma, \theta \mhyphen ss}$ and acts on $\tilde{\nu}_{d^{\bullet},e^{\infty} \vert_M}$ with weights
\begin{enumerate}
\item $t_j - t_i$ on the summand $\Ext^1(V_i, V_j)$,
\item $- t_i$ on the summand $\Ext^1(V_i, N)$, and
\item $-t_j- t_i$ on the summand $\Ext^1(V_i, S(V_j))$.
\end{enumerate}
We can therefore apply the Atiyah-Bott lemma \cite[Proposition 13.4]{atiyah1983} to conclude that $\mathsf{eu}_{\mathsf{G}^{\sigma}_e}(\nu_{d^{\bullet},e^{\infty}})$ is not a zero divisor.
\end{proof}

We describe here one application of Proposition \ref{prop:noOddCohom}. Let $\mathbb{F}_q$ be a finite field of odd order $q$. In \cite[Theorem 4.4]{mbyoung2015} the stacky number of $\mathbb{F}_q$-rational points $\# \mathbf{M}_e^{\sigma, \theta \mhyphen ss} (\mathbb{F}_q)$ was explicitly computed, the result being a rational function of $q$. A similar calculation shows that the class $[\mathbf{M}_e^{\sigma, \theta \mhyphen ss}] \in K_0(\mathsf{St}_{\mathbb{C}})$ in the Grothendieck ring of stacks over $\mathbb{C}$ is given by the same rational function, with $q$ replaced by the Lefschetz motive $\mathbb{L} = [\mathbb{A}^1_{\mathbb{C}}]$. The new ingredient in the motivic setting is the computation of the orthogonal factors of the motive of the classifying stack $B \mathsf{G}^{\sigma}_e$, which was done in \cite{dhillonyoung2016}. It follows from Proposition \ref{prop:noOddCohom} that the formula from \cite{mbyoung2015} in fact computes the Poincar\'{e} series of $\mathbf{M}_e^{\sigma, \theta \mhyphen ss}$ with variable $q^{\frac{1}{2}}$.

\section{Chow theoretic Hall algebras and modules}

\subsection{Basic definitions}

We recall the cohomological Hall algebra of Kontsevich and Soibelman along with some of its modifications.

Fix a quiver $Q$. Let $\mathsf{Vect}_{\mathbb{Z}}$ be the category of finite dimensional $\mathbb{Z}$-graded rational vector spaces. Write $D^{lb}(\mathsf{Vect}_{\mathbb{Z}}) \subset D(\mathsf{Vect}_{\mathbb{Z}})$ for the full subcategory of objects whose cohomological and $\mathbb{Z}$ degrees are bounded below. Let also $D^{lb}(\mathsf{Vect}_{\mathbb{Z}})_{\Lambda^+_Q}$ be the category whose objects are $\Lambda^+_Q$-graded objects of $D^{lb}(\mathsf{Vect}_{\mathbb{Z}})$ with finite dimensional $\Lambda_Q^+ \times \mathbb{Z}$-homogeneous summands and whose morphisms preserve the $\Lambda_Q^+ \times \mathbb{Z}$-grading. Denote by $\{ \frac{1}{2} \}$ the functor given by tensor product with the one dimensional vector space of cohomological and $\mathbb{Z}$ degree $-1$. Define a monoidal product $\boxtimes^{\mathsf{tw}}$ on $D^{lb}(\mathsf{Vect}_{\mathbb{Z}})_{\Lambda^+_Q}$ by
\[
\bigoplus_{d \in \Lambda^+_Q} \mathcal{U}^{\prime}_d  \boxtimes^{\mathsf{tw}}  \bigoplus_{d \in \Lambda^+_Q} \mathcal{U}^{\prime \prime}_{d}   = \bigoplus_{d \in \Lambda^+_Q} \Big( \bigoplus_{\substack{ (d^{\prime},d^{\prime \prime}) \in \Lambda_Q^+ \times \Lambda_Q^+ \\ d= d^{\prime} + d^{\prime \prime}}}  \mathcal{U}^{\prime}_{d^{\prime}} \otimes \mathcal{U}^{\prime \prime}_{d^{\prime \prime}}  \{(\chi(d^{\prime} , d^{\prime \prime}) - \chi(d^{\prime \prime} , d^{\prime}) )\slash 2 \} \Big).
\]

Let $\theta$ be a stability. For each slope $\mu \in \mathbb{Q}$ define
\[
\mathcal{H}^{\theta \mhyphen ss}_{Q,\mu} = \bigoplus_{d \in \Lambda_{Q, \mu}^+} H^{\bullet}_{\mathsf{GL}_d}(R^{\theta \mhyphen ss}_d) \{ \chi(d,d) \slash 2 \} \in D^{lb}(\mathsf{Vect}_{\mathbb{Z}})_{\Lambda^+_{Q,\mu}}
\]
where $\Lambda_{Q, \mu}^+ = \{ d \in \Lambda_Q^+ \mid \mu(d) = \mu\} \cup \{0\}$ and we use singular equivariant cohomology with rational coefficients. The $\mathbb{Z}$-grading of $\mathcal{H}^{\theta \mhyphen ss}_{Q,\mu}$ is the cohomological, or equivalently Hodge theoretic weight, grading. We interpret $H^{\bullet}_{\mathsf{GL}_d}(R^{\theta \mhyphen ss}_d)$ as the cohomology of the stack $\mathbf{M}_d^{\theta \mhyphen ss}=[R_d^{\theta \mhyphen ss} \slash \mathsf{GL}_d]$. For each $d, d^{\prime} \in \Lambda_{Q,\mu}^+$ there is a correspondence
\[
\begin{array}{ccccl}
\mathbf{M}^{\theta \mhyphen ss}_d \times \mathbf{M}^{\theta \mhyphen ss}_{d^{\prime}} & \xleftarrow[]{\pi_1 \times \pi_3} & \mathbf{M}^{\theta \mhyphen ss}_{d,d^{\prime}}  & \xrightarrow[]{\pi_2} & \mathbf{M}^{\theta \mhyphen ss}_{d+ d^{\prime}} \\ 
(U, V \slash U) & \longmapsfrom & U \subset V & \longmapsto & V
\end{array}
\]
with $\mathbf{M}^{\theta \mhyphen ss}_{d,d^{\prime}} = [R_{d,d^{\prime}}^{\theta \mhyphen ss} \slash \mathsf{GL}_{d,d^{\prime}}]$ the stack of flags of semistable representations of dimension vector type $(d,d^{\prime})$. The map $\pi_1 \times \pi_3$ is a homotopy equivalence while $\pi_2$ is proper. The composition $\pi_{2!} \circ (\pi_1 \times \pi_3)^*$ defines a product on $\mathcal{H}^{\theta \mhyphen ss}_{Q,\mu}$ making it into an associative algebra object in $D^{lb}(\mathsf{Vect}_{\mathbb{Z}})_{\Lambda^+_Q}$, called the slope $\mu$ semistable cohomological Hall algebra (CoHA) \cite{kontsevich2011}.

When $Q$ has a duality structure and $\theta$ is $\sigma$-compatible, the category $D^{lb}(\mathsf{Vect}_{\mathbb{Z}})_{\Lambda_Q^{\sigma,+}}$ becomes a left module over $(D^{lb}(\mathsf{Vect}_{\mathbb{Z}})_{\Lambda^+_Q}, \boxtimes^{\mathsf{tw}})$ via the formula
\[
\bigoplus_{d \in \Lambda^+_Q} \mathcal{U}_d \boxtimes^{S \mhyphen \mathsf{tw}} \bigoplus_{e \in \Lambda_Q^{\sigma,+}} \mathcal{V}_e = \bigoplus_{e \in \Lambda_Q^{\sigma,+}} \Big(\bigoplus_{\substack{ (d^{\prime},e^{\prime}) \in \Lambda_Q^+ \times \Lambda_Q^{\sigma,+} \\ e =H(d^{\prime}) + e^{\prime}}}  \mathcal{U}_{d^{\prime}} \otimes \mathcal{V}_{e^{\prime}}  \{ \gamma(d^{\prime}, e^{\prime}) \slash 2 \} \Big)
\]
where
\[
\gamma(d, e) = \chi( d, e ) - \chi(e , d) + \mathcal{E}(\sigma(d)) - \mathcal{E}(d).
\]
Define
\[
\mathcal{M}^{\theta \mhyphen ss}_Q = \bigoplus_{e \in \Lambda_Q^{\sigma,+}} H^{\bullet}_{\mathsf{G}^{\sigma}_e}(R^{\sigma,\theta \mhyphen ss}_e) \{ \mathcal{E}(e) \slash 2 \} \in D^{lb}(\mathsf{Vect}_{\mathbb{Z}})_{\Lambda_Q^{\sigma,+}}.
\]
If $d \in \Lambda_{Q, \mu=0}^+$ and $e \in \Lambda_Q^{\sigma,+}$, then there is a correspondence of stacks
\[
\begin{array}{ccccl}
\mathbf{M}^{\theta \mhyphen ss}_d \times \mathbf{M}^{\sigma, \theta \mhyphen ss}_{e} & \xleftarrow[]{\pi_1 \times \pi^{\sigma}_3} & \mathbf{M}^{\sigma, \theta \mhyphen ss}_{d,e}  & \xrightarrow[]{\pi^{\sigma}_2} & \mathbf{M}^{\sigma, \theta \mhyphen ss}_{d + \sigma(d) + e} \\ 
(U, M \git U) & \longmapsfrom & U \subset M & \longmapsto & M
\end{array}
\]
where $\mathbf{M}^{\sigma, \theta \mhyphen ss}_e = [R^{\sigma, \theta \mhyphen ss}_e \slash \mathsf{G}^{\sigma}_e]$ and $\mathbf{M}^{\sigma, \theta \mhyphen ss}_{d,e} = [R^{\sigma, \theta \mhyphen ss}_{d,e} \slash \mathsf{G}^{\sigma}_{d,e}]$. As above, $\pi_1 \times \pi_3^{\sigma}$ is a homotopy equivalence and $\pi_2^{\sigma}$ is proper. The composition $\pi^{\sigma}_{2!} \circ (\pi_1 \times \pi^{\sigma}_3)^*$ gives $\mathcal{M}^{\theta \mhyphen ss}_Q$ the structure of left $\mathcal{H}^{\theta \mhyphen ss}_{Q, \mu=0}$-module object in $D^{lb}(\mathsf{Vect}_{\mathbb{Z}})_{\Lambda_Q^{\sigma,+}}$, called the semistable cohomological Hall module (CoHM). See \cite{mbyoung2016b} for details. We denote by $\star$ the action of $\mathcal{H}^{\theta \mhyphen ss}_{Q, \mu=0}$ on $\mathcal{M}^{\theta \mhyphen ss}_Q$.

Let $\mathsf{W}(Q)$ be the abelian group $\mathbb{Z}_2 Q_0^{\sigma}$. We have an exact sequence of groups
\[
\Lambda_Q \xrightarrow[]{H} \Lambda_Q^{\sigma} \xrightarrow[]{\nu} \mathsf{W}(Q) \rightarrow 0
\]
where $H(d) = d + \sigma(d)$ and $\nu$ sends a dimension vector to its parities at $Q_0^{\sigma}$. Then there is a $\mathcal{H}_{Q, \mu=0}^{\theta \mhyphen ss}$-module decomposition $
\mathcal{M}_Q^{\theta \mhyphen ss} = \bigoplus_{w \in \mathsf{W}(Q)} \mathcal{M}_Q^{\theta \mhyphen ss}(w)$ with
\[
\mathcal{M}_Q^{\theta \mhyphen ss}(w) = \bigoplus_{\substack{e \in \Lambda_Q^{\sigma,+} \\ \nu(e) =w} } H^{\bullet}_{\mathsf{G}_e^{\sigma}}(R^{\sigma, \theta \mhyphen ss}_e) \{\mathcal{E}(e) \slash 2\}.
\]
Note that $\mathcal{M}_Q^{\theta \mhyphen ss}(w)$ is trivial unless $s_i=1$ whenever $w_i=1$.

In the case of trivial stability, $\theta =0$, we write $\mathcal{H}_Q$ and $\mathcal{M}_Q$ for the associated cohomological Hall algebra or module. There are explicit combinatorial descriptions of $\mathcal{H}_Q$ and $\mathcal{M}_Q$ in terms of shuffle algebras and signed shuffle modules; see \cite[Theorem 2]{kontsevich2011} and \cite[Theorem 3.3]{mbyoung2016b}, respectively.

There are also Chow theoretic versions of $\mathcal{H}^{\theta \mhyphen ss}_{Q,\mu}$ and $\mathcal{M}^{\theta \mhyphen ss}_Q$, called the Chow Hall algebra (ChowHA) and module (ChowHM) and denoted by $\mathcal{A}_{Q,\mu}^{\theta \mhyphen ss}$ and $\mathcal{B}_Q^{\theta \mhyphen ss}$. The former was introduced and studied in \cite{franzen2016}, \cite{franzen2015}, \cite{franzen2015b}. The definitions of $\mathcal{A}_{Q,\mu}^{\theta \mhyphen ss}$ and $\mathcal{B}_Q^{\theta \mhyphen ss}$ are entirely similar to those of their cohomological counterparts but with rational equivariant Chow groups used in place of rational equivariant cohomology groups. For example, the slope $\mu$ semistable ChowHA is
\[
\mathcal{A}^{\theta \mhyphen ss}_{Q,\mu} = \bigoplus_{d \in \Lambda_{Q, \mu}^+} A^{\bullet}_{\mathsf{GL}_d}(R^{\theta \mhyphen ss}_d)_{\mathbb{Q}} \{\chi(d,d) \slash 2 \} \in D^{lb}(\mathsf{Vect}_{\mathbb{Z}})_{\Lambda_{Q,\mu}^+}.
\]
The $\mathbb{Z}$-grading of $\mathcal{A}^{\theta \mhyphen ss}_{Q,\mu}$ is defined by putting the equivariant Chow group $A^k_{\mathsf{GL}_d}(R_d^{\theta \mhyphen ss})$ in degree $2k$. The semistable ChowHM is defined analogously. Like $\mathcal{M}_Q^{\theta \mhyphen ss}$, the $\mathcal{A}^{\theta \mhyphen ss}_{Q,\mu=0}$-module $\mathcal{B}_Q^{\theta \mhyphen ss}$ admits a direct sum decomposition labelled by $\mathsf{W}(Q)$. For background on equivariant Chow theory the reader is referred to \cite{edidin1998}, \cite{brion1998}.

\subsection{Comparison of the ChowHM and CoHM}

Let $\mathsf{G}$ be a linear algebraic group $\mathsf{G}$. We write $A^{\bullet}_{\mathsf{G}}$ for $A_{\mathsf{G}}^{\bullet}(\mbox{Spec} \, \mathbb{C})_{\mathbb{Q}}$ and similarly for $H^{\bullet}_{\mathsf{G}}$. For each $r \geq 1$ there is a graded ring isomorphism $A_{\mathsf{GL}_r}^{\bullet} \simeq \mathbb{Q}[x_1, \dots, x_r]$ with $x_i$ of degree $i$ and the equivariant cycle map $A_{\mathsf{GL}_r}^{\bullet} \rightarrow H_{\mathsf{GL}_r}^{\bullet}$ is a degree doubling ring isomorphism. In particular, $H_{\mathsf{GL}_r}^{\bullet}$ is concentrated in even degrees. Similarly, for $\mathsf{G}$ an orthogonal or symplectic group of rank $r \geq 1$ we have a ring isomorphism $A_{\mathsf{G}}^{\bullet} \simeq \mathbb{Q}[p_1, \dots, p_r ]$ with $p_i$ of degree $2i$ and the equivariant cycle map $A_{\mathsf{G}}^{\bullet} \rightarrow H_{\mathsf{G}}^{\bullet}$ is again a degree doubling isomorphism. Note that with integer coefficients the above statements are only valid for general linear and symplectic groups. Even with rational coefficients the statements are not true for the special orthogonal groups $\mathsf{SO}_{2r}$. See \cite{brown1982}, \cite{edidin1997},  \cite{pandharipande1998}, \cite{totaro1999} for proofs of these statements.

In \cite[Corollary 5.6]{franzen2015b} it is shown that for any stability $\theta$ and slope $\mu$ the equivariant cycle map $\mathsf{cl}_{alg}: \mathcal{A}_{Q,\mu}^{\theta \mhyphen ss} \rightarrow \mathcal{H}_{Q,\mu}^{\theta \mhyphen ss}$ is an isomorphism of algebra objects in $D^{lb}(\mathsf{Vect}_{\mathbb{Z}})_{\Lambda_{Q,\mu}^+}$. The next result lifts this isomorphism to Hall modules.

\begin{Thm}
\label{thm:moduleComparison}
Let $\theta$ be a $\sigma$-compatible stability. The equivariant cycle map $\mathsf{cl}_{mod} : \mathcal{B}^{\theta \mhyphen ss}_Q \rightarrow \mathcal{M}^{\theta \mhyphen ss}_Q$ is an isomorphism in $D^{lb}(\mathsf{Vect}_{\mathbb{Z}})_{\Lambda_Q^{\sigma,+}}$. Moreover, the diagram
\[
\begin{tikzpicture}
  \matrix (m) [matrix of math nodes,nodes={anchor=center},row sep=3em,column sep=3em,minimum width=1.4em] {
\mathcal{A}^{\theta \mhyphen ss}_{Q, \mu=0} \boxtimes^{S \mhyphen \mathsf{tw}} \mathcal{B}^{\theta \mhyphen ss}_Q & \mathcal{B}^{\theta \mhyphen ss}_Q  \\
\mathcal{H}^{\theta \mhyphen ss}_{Q, \mu=0} \boxtimes^{S \mhyphen \mathsf{tw}} \mathcal{H}^{\theta \mhyphen ss}_Q & \mathcal{M}^{\theta \mhyphen ss}_Q \\};
\draw  (m-1-1) edge[->] node[above]{$\star$} (m-1-2);
\draw  (m-1-1) edge[->] node[left]{$\mathsf{cl}_{alg} \boxtimes^{S \mhyphen \mathsf{tw}} \mathsf{cl}_{mod}$} (m-2-1);
\draw  (m-2-1) edge[->] node[below]{$\star$} (m-2-2);
\draw  (m-1-2) edge[->] node[right]{$\mathsf{cl}_{mod}$} (m-2-2);
\end{tikzpicture}
\]
commutes.
\end{Thm}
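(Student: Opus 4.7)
The plan is to separately handle the two assertions: that $\mathsf{cl}_{mod}$ is an isomorphism, and that the displayed square commutes.

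For the isomorphism, I would argue by induction on $\dim e$. The base case $e=0$ is trivial. For the inductive step, I combine three ingredients: the $\sigma$-HN stratification of Proposition \ref{prop:sdHNStrat}, its $\mathsf{G}_e^\sigma$-equivariant perfection established in the proof of Proposition \ref{prop:noOddCohom}, and the algebra isomorphism $\mathsf{cl}_{alg}:\mathcal{A}^{\theta\mhyphen ss}_{Q,\mu}\xrightarrow[]{\sim}\mathcal{H}^{\theta\mhyphen ss}_{Q,\mu}$ proved in \cite{franzen2015b}. The starting point is that the cycle map is a degree-doubling isomorphism on the affine space $R_e^\sigma$, since it reduces to the map $A_{\mathsf{G}_e^\sigma}^\bullet \to H_{\mathsf{G}_e^\sigma}^\bullet$ which is an isomorphism by the structure theorems recalled at the beginning of Section 3.2.

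Next I would show that $\mathsf{cl}_{mod}$ is an isomorphism on each non-trivial $\sigma$-HN stratum $R_{d^\bullet,e^\infty}^{\sigma,HN}$. By the isomorphism \eqref{eq:hnStrataIsom} and the fact that $\pi^\sigma$ is a trivial vector bundle, the equivariant Chow and cohomology groups of this stratum are both computed from $R_{d^1}^{\theta\mhyphen ss}\times\cdots\times R_{d^r}^{\theta\mhyphen ss}\times R_{e^\infty}^{\sigma,\theta\mhyphen ss}$ equivariantly for $\mathsf{GL}_{d^1}\times\cdots\times \mathsf{GL}_{d^r}\times\mathsf{G}_{e^\infty}^\sigma$. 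Since $\dim e^\infty<\dim e$, the inductive hypothesis applies to the self-dual factor, while \cite[Corollary 5.6]{franzen2015b} handles each $\mathsf{GL}_{d^j}$-factor. The purity established in Proposition \ref{prop:noOddCohom} makes the K\"unneth formula available in both Chow and cohomology, so the cycle map is an isomorphism on the product, hence on each non-trivial stratum.

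To transfer this to $R_e^{\sigma,\theta\mhyphen ss}$, I would enumerate the $\sigma$-HN types in an order compatible with Proposition \ref{prop:sdHNStratClose} and peel the strata off one at a time. At each step I compare the Chow localization sequence with the Thom-Gysin sequence in equivariant cohomology, which thanks to the equivariant perfection proved in Proposition \ref{prop:noOddCohom} breaks into the short exact pieces \eqref{eq:thomShort}. A Five-Lemma diagram chase---using exactness in cohomology to promote the Chow sequence to a short exact sequence as well---shows that if the cycle map is an isomorphism on a union of strata and on the closed stratum being removed, then it is an isomorphism on the open complement. Iterating from $R_e^\sigma$ inward yields the desired isomorphism on $R_e^{\sigma,\theta\mhyphen ss}$, compatibly with the grading shifts $\{\mathcal{E}(e)/2\}$ used on both sides.

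The commutativity of the diagram is then formal: the module action $\star$ is defined by $\pi^\sigma_{2!}\circ(\pi_1\times\pi^\sigma_3)^*$ in both Chow and cohomology with identical conventions, and the equivariant cycle map is natural with respect to pullback along homotopy equivalences and proper pushforward. The main technical obstacle is the inductive comparison of the Chow localization and Thom-Gysin sequences via the cycle map; this is precisely what equivariant perfection and Hodge-Tate purity from Proposition \ref{prop:noOddCohom} are designed to handle, so once that proposition is in hand the remainder of the argument is mechanical.
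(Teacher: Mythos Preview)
Your proposal is correct and follows essentially the same route as the paper: induction on the dimension vector, identification of the cycle map on each $\sigma$-HN stratum with a tensor product via K\"unneth (using the induction hypothesis on the self-dual factor and \cite{franzen2015b} on the ordinary factors), and a localization/Thom--Gysin comparison to pass to the open semistable locus. The paper packages your Five-Lemma step as a citation to \cite[Lemma 5.3]{franzen2015b}, organizes the induction within fixed classes $w\in\mathsf{W}(Q)$ (so the base case is the minimal $e$ of each class rather than $e=0$), and cites Totaro for the K\"unneth isomorphism, but these are cosmetic differences; the underlying argument is the same.
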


\begin{proof}
It suffices to prove the theorem when $\mathcal{B}^{\theta \mhyphen ss}_Q$ and $\mathcal{M}^{\theta \mhyphen ss}_Q$ are replaced with $\mathcal{B}^{\theta \mhyphen ss}_Q(w)$ and $\mathcal{M}^{\theta \mhyphen ss}_Q(w)$, respectively, for an arbitrary class $w \in \mathsf{W}(Q)$.

Proposition \ref{prop:noOddCohom} shows that $H^{2k+1}_{\mathsf{G}_e^{\sigma}}(R_e^{\sigma, \theta \mhyphen ss})$ vanishes for all $k \geq 0$, so for the first statement we need only prove that
\[
\mathsf{cl}_{mod}^e: A^k_{\mathsf{G}_e^{\sigma}}(R_e^{\sigma, \theta \mhyphen ss})_{\mathbb{Q}} \rightarrow H^{2k}_{\mathsf{G}_e^{\sigma}}(R_e^{\sigma, \theta \mhyphen ss})
\]
is a rational vector space isomorphism for each $k \geq 0$. Define a partial order on $\Lambda_Q^{\sigma,+}$ by $e^{\prime} \leq e$ if $e^{\prime}_i \leq e_i$ for all $i \in Q_0$. We proceed by induction on $e \in \Lambda_Q^{\sigma,+}$ with fixed class $w \in \mathsf{W}(Q)$. Let $e$ be the minimal such dimension vector. Explicitly, $e$ is zero except at those nodes $i \in Q_0^{\sigma}$ with $s_i=1$, in which case $e_i$ is zero if $w_i=0$ and is one if $w_i=1$. A self-dual representation of dimension vector $e$ has no isotropic subrepresentations. Hence $R_e^{\sigma, \theta \mhyphen ss} = R_e^{\sigma}$ and the cycle map reduces to $\mathsf{cl}_{mod}^e : A^k_{\mathsf{G}_e^{\sigma}} \rightarrow H^{2k}_{\mathsf{G}_e^{\sigma}}$, which is an isomorphism by the discussion at the start of this section.

Assume that $\mathsf{cl}_{mod}^{e^{\prime}}$ is an isomorphism for all $e^{\prime} < e$ of class $w$. Let $R_e^{\sigma, \theta \mhyphen unst} = R_e^{\sigma} - R_e^{\sigma, \theta \mhyphen ss}$ be the closed subscheme of unstable self-dual representations of dimension vector $e$. Using Proposition \ref{prop:sdHNStratClose} we obtain a stratification of $R_e^{\sigma, \theta \mhyphen unst}$ by $\mathsf{G}_e^{\sigma}$-invariant closed subschemes whose successive complements are of the form $R_{d^{\bullet}, e^{\infty}}^{\sigma, HN}$ for some $(d^{\bullet}, e^{\infty}) \in \mathsf{HN}^{\sigma}(e)$ different from $(e)$. Since $R_e^{\sigma, \theta \mhyphen ss}$ is smooth, there are isomorphisms
\[
A_{\bullet}^{\mathsf{G}_e^{\sigma}}(R_e^{\sigma, \theta \mhyphen ss}) \simeq A^{\dim R_e^{\sigma} - \bullet}_{\mathsf{G}_e^{\sigma}}(R_e^{\sigma, \theta \mhyphen ss})  ,  \qquad H_{\bullet}^{BM, \mathsf{G}_e^{\sigma}}(R_e^{\sigma, \theta \mhyphen ss}) \simeq H^{2 \dim R_e^{\sigma} - \bullet}_{\mathsf{G}_e^{\sigma}}(R_e^{\sigma, \theta \mhyphen ss}),
\]
where $H^{BM}_{\bullet}(-)$ denotes Borel-Moore homology with rational coefficients. Using the equivariant lift of \cite[Lemma 5.3]{franzen2015b}, to prove that $\mathsf{cl}_{mod}^e$ is an isomorphism it therefore suffices to prove that each of the cycle maps $A^{\bullet}_{\mathsf{G_e^{\sigma}}}(R_{d^{\bullet}, e^{\infty}}^{\sigma, HN})_{\mathbb{Q}} \rightarrow H^{\bullet}_{\mathsf{G_e^{\sigma}}}(R_{d^{\bullet}, e^{\infty}}^{\sigma, HN})$ is an isomorphism. Arguing as in the proof of Proposition \ref{prop:noOddCohom}, we find
\[
A_{\mathsf{G}_e^{\sigma}}^{\bullet}(R_{d^{\bullet}, e^{\infty}}^{\sigma, HN} ) \simeq A^{\bullet}_{\mathsf{GL}_{d^1} \times \cdots \times \mathsf{GL}_{d^r} \times \mathsf{G}_{e^{\infty}}^{\sigma}}(R_{d^1}^{\theta \mhyphen ss} \times \cdots \times R_{d^r}^{\theta \mhyphen ss} \times R_{e^{\infty}}^{\sigma,\theta \mhyphen ss}).
\]
Since $e^{\infty} < e$, the inductive hypothesis implies that $\mathsf{cl}_{mod}^{e^{\infty}}$ is an isomorphism while $\mathsf{cl}_{alg}^d$ is an isomorphism for all $d \in \Lambda_Q^+$ by \cite[Theorem 5.1]{franzen2015b}. As $R^{\sigma, \theta \mhyphen ss}_{e^{\infty}}$ has no odd degree $\mathsf{G}_{e^{\infty}}^{\sigma}$-equivariant cohomology, we can apply the equivariant version of \cite[Lemmas 6.1, 6.2]{totaro1999} (with rational coefficients) to conclude that the exterior product map
\begin{multline*}
A^{\bullet}_{\mathsf{GL}_{d^1}} (R_{d^1}^{\theta \mhyphen ss})_{\mathbb{Q}} \otimes \cdots \otimes A^{\bullet}_{\mathsf{GL}_{d^r}} (R_{d^r}^{\theta \mhyphen ss})_{\mathbb{Q}} \otimes   A^{\bullet}_{\mathsf{G}_{e^{\infty}}^{\sigma}}(R_{e^{\infty}}^{\sigma,\theta \mhyphen ss})_{\mathbb{Q}} \rightarrow \\
A^{\bullet}_{\mathsf{GL}_{d^1} \times \cdots \times \mathsf{GL}_{d^r} \times \mathsf{G}_{e^{\infty}}^{\sigma}}(R_{d^1}^{\theta \mhyphen ss} \times \cdots \times R_{d^r}^{\theta \mhyphen ss} \times R_{e^{\infty}}^{\sigma,\theta \mhyphen ss})_{\mathbb{Q}}
\end{multline*}
is an isomorphism. Similarly, the corresponding K\"{u}nneth map in equivariant cohomology is an isomorphism. Compatibility of the cycle map with exterior products then implies that $\mathsf{cl}_{mod}^e$ is an isomorphism.

That $\mathsf{cl}_{mod}$ respects gradings is clear. That $\mathsf{cl}_{mod}$ respects the Hall algebra actions follows from the fact that cycle maps are covariant for proper morphisms and contravariant for morphisms of smooth varieties.
\end{proof}

\begin{Rem}
Unlike $\mathsf{cl}_{alg}: \mathcal{A}_{Q,\mu}^{\theta \mhyphen ss} \rightarrow \mathcal{H}_{Q,\mu}^{\theta \mhyphen ss}$, the map $\mathsf{cl}_{mod} : \mathcal{B}^{\theta \mhyphen ss}_Q \rightarrow \mathcal{M}^{\theta \mhyphen ss}_Q$ is in general neither injective nor surjective if integer coefficients are used. However, the diagram from Theorem \ref{thm:moduleComparison} remains commutative over the integers.
\end{Rem}

\begin{Cor}
\label{cor:purePart}
Let $Q$ be a quiver with duality structure and $\sigma$-compatible stability $\theta$. For each $e \in \Lambda_Q^{\sigma,+}$ the cycle map $\mathsf{cl}: A^{\bullet}(\mathfrak{M}_e^{\sigma, \theta \mhyphen st})_{\mathbb{Q}} \rightarrow H^{\bullet}(\mathfrak{M}_e^{\sigma, \theta \mhyphen st})$ surjects onto the pure part
\[
PH^{\bullet}(\mathfrak{M}_e^{\sigma, \theta \mhyphen st}) = \bigoplus_{k \geq 0} W_k H^k(\mathfrak{M}_e^{\sigma, \theta \mhyphen st}).
\]
In particular, $PH^{\bullet}(\mathfrak{M}_e^{\sigma, \theta \mhyphen st})$ consists entirely of Hodge classes, that is,
\[
W_{2k} H^{2k}(\mathfrak{M}_e^{\sigma, \theta \mhyphen st}) = W_{2k}H^{2k}(\mathfrak{M}_e^{\sigma, \theta \mhyphen st}) \cap F^k H^{2k}(\mathfrak{M}_e^{\sigma, \theta \mhyphen st}; \mathbb{C})
\]
and $W_{2k+1} H^{2k+1}(\mathfrak{M}_e^{\sigma,\theta \mhyphen st})=0$ for all $k \geq 0$.
\end{Cor}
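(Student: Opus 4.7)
The plan is to deduce the corollary from Theorem \ref{thm:moduleComparison} and Proposition \ref{prop:noOddCohom} via a weight argument comparing the semistable and stable loci. Since $\mathbf{M}_e^{\sigma, \theta \mhyphen st}$ is a smooth Deligne-Mumford stack whose coarse moduli space $\mathfrak{M}_e^{\sigma, \theta \mhyphen st}$ has at worst finite quotient singularities, rational Chow groups and rational cohomology of the stack and of the coarse moduli space agree as mixed Hodge structures. Hence it suffices to show that the equivariant cycle map $\mathsf{cl} : A_{\mathsf{G}_e^{\sigma}}^\bullet(R_e^{\sigma, \theta \mhyphen st})_{\mathbb{Q}} \to H_{\mathsf{G}_e^{\sigma}}^\bullet(R_e^{\sigma, \theta \mhyphen st})$ surjects onto the pure part of its target.

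Consider the commutative square of equivariant cycle maps associated to the open embedding $R_e^{\sigma, \theta \mhyphen st} \hookrightarrow R_e^{\sigma, \theta \mhyphen ss}$, with vertical arrows the restrictions. Theorem \ref{thm:moduleComparison} makes the top cycle map an isomorphism, and the left vertical restriction is surjective by the localization exact sequence for equivariant Chow groups. Hence $\mathrm{im}(\mathsf{cl})$ contains the image of the cohomology restriction $H_{\mathsf{G}_e^{\sigma}}^\bullet(R_e^{\sigma, \theta \mhyphen ss}) \to H_{\mathsf{G}_e^{\sigma}}^\bullet(R_e^{\sigma, \theta \mhyphen st})$. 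Because $H_{\mathsf{G}_e^{\sigma}}^\bullet(R_e^{\sigma, \theta \mhyphen ss})$ is pure of Hodge-Tate type by Proposition \ref{prop:noOddCohom}, this image lies inside the pure part and inherits a Hodge-Tate structure. The desired result will follow once I show that the cohomology restriction in fact hits every pure class.

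This surjection onto the pure part is the main obstacle, and I propose to prove it by an equivariant Gysin/weight argument. Fix any $\mathsf{G}_e^{\sigma}$-invariant stratification $\{Z_\alpha\}$ of the complement $Z = R_e^{\sigma, \theta \mhyphen ss} \setminus R_e^{\sigma, \theta \mhyphen st}$ by smooth locally closed subvarieties of pure codimensions $c_\alpha$; such a stratification exists by general equivariant stratification theory. Equivariant Thom isomorphisms and the long exact sequences of the pairs exhibit $H_{\mathsf{G}_e^{\sigma},Z}^\bullet(R_e^{\sigma, \theta \mhyphen ss})$ as an iterated extension of Tate-twisted groups $H^{\bullet - 2 c_\alpha}_{\mathsf{G}_e^{\sigma}}(Z_\alpha)(-c_\alpha)$. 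Smoothness of each $Z_\alpha$ forces $H^{j}_{\mathsf{G}_e^{\sigma}}(Z_\alpha)$ to have weights at least $j$, so every contribution to $H^{k+1}_{\mathsf{G}_e^{\sigma},Z}(R_e^{\sigma, \theta \mhyphen ss})$ carries weights at least $k+1$; consequently $W_k H^{k+1}_{\mathsf{G}_e^{\sigma},Z}(R_e^{\sigma, \theta \mhyphen ss}) = 0$. The connecting homomorphism
\[
H^k_{\mathsf{G}_e^{\sigma}}(R_e^{\sigma, \theta \mhyphen st}) \to H^{k+1}_{\mathsf{G}_e^{\sigma},Z}(R_e^{\sigma, \theta \mhyphen ss})
\]
in the excision sequence, being a morphism of mixed Hodge structures, therefore annihilates $W_k H^k_{\mathsf{G}_e^{\sigma}}(R_e^{\sigma, \theta \mhyphen st})$, placing the pure part inside the image of the restriction from the semistable locus, and thus inside $\mathrm{im}(\mathsf{cl})$.

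The two refined statements follow immediately. Since the pure part is a quotient of the Hodge-Tate group $H_{\mathsf{G}_e^{\sigma}}^\bullet(R_e^{\sigma, \theta \mhyphen ss})$, it is itself Hodge-Tate, which gives the identification $W_{2k}H^{2k}(\mathfrak{M}_e^{\sigma, \theta \mhyphen st}) = W_{2k}H^{2k}(\mathfrak{M}_e^{\sigma, \theta \mhyphen st}) \cap F^k H^{2k}(\mathfrak{M}_e^{\sigma, \theta \mhyphen st}; \mathbb{C})$. For odd degree, Proposition \ref{prop:noOddCohom} gives $H^{2k+1}_{\mathsf{G}_e^{\sigma}}(R_e^{\sigma, \theta \mhyphen ss}) = 0$, whence $W_{2k+1}H^{2k+1}(\mathfrak{M}_e^{\sigma, \theta \mhyphen st}) = 0$ by the same surjection. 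Unlike in Proposition \ref{prop:sdHNStrat}, the stratification of $Z$ used in paragraph three need not be canonical, so no new combinatorial input beyond the results already established in Section \ref{sec:hnStratifications} is required.
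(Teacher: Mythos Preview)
Your proof is correct and follows the same overall strategy as the paper: set up the commutative square of cycle and restriction maps, invoke Theorem \ref{thm:moduleComparison} on the semistable side, and then use the Deligne--Mumford identification \cite[Theorem 4]{edidin1998} to pass to $\mathfrak{M}_e^{\sigma,\theta\mhyphen st}$. The one difference is that where the paper cites \cite[Theorem 1.1]{chen2014} and \cite[Proposition 3.9]{mbyoung2016b} for the statement that the restriction $H_{\mathsf{G}_e^{\sigma}}^{\bullet}(R_e^{\sigma,\theta \mhyphen ss}) \rightarrow H_{\mathsf{G}_e^{\sigma}}^{\bullet}(R_e^{\sigma,\theta \mhyphen st})$ surjects onto the pure part, you supply a self-contained weight argument via the excision sequence and a smooth stratification of the properly semistable locus; this is the standard mechanism underlying those references, so the two arguments are the same in substance.
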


\begin{proof}
The method of proof of \cite[Theorem 1.1]{chen2014} can be used to show that the restriction map $H_{\mathsf{G}_e^{\sigma}}^{\bullet}(R_e^{\sigma,\theta \mhyphen ss}) \rightarrow H_{\mathsf{G}_e^{\sigma}}^{\bullet}(R_e^{\sigma,\theta \mhyphen st})$ factors through a surjection $H_{\mathsf{G}_e^{\sigma}}^{\bullet}(R_e^{\sigma,\theta \mhyphen ss}) \twoheadrightarrow PH_{\mathsf{G}_e^{\sigma}}^{\bullet}(R_e^{\sigma, \theta \mhyphen st})$. This is carried out in \cite[Proposition 3.9]{mbyoung2016b} under the assumption that $Q$ is $\sigma$-symmetric and $\theta=0$, but the same argument works in general. The only new ingredient is that $H_{\mathsf{G}_e^{\sigma}}^{\bullet}(R_e^{\sigma,\theta \mhyphen ss})$ vanishes in odd degrees and is of Hodge-Tate type otherwise, which was proved in Proposition \ref{prop:noOddCohom}. For each $k \geq 0$ we therefore obtain an exact commutative diagram
\[
\begin{tikzpicture}
  \matrix (m) [matrix of math nodes,nodes={anchor=center},row sep=3em,column sep=3em,minimum width=1.4em] {
A_{\mathsf{G}_e^{\sigma}}^k(R_e^{\sigma,\theta \mhyphen ss})_{\mathbb{Q}} & A_{\mathsf{G}_e^{\sigma}}^k(R_e^{\sigma,\theta \mhyphen st})_{\mathbb{Q}} \\
H_{\mathsf{G}_e^{\sigma}}^{2k}(R_e^{\sigma,\theta \mhyphen ss}) & P H_{\mathsf{G}_e^{\sigma}}^{2k}(R_e^{\sigma,\theta \mhyphen st})\\};
\draw  (m-1-1) edge[->>] (m-1-2);
\draw  (m-1-1) edge[->] node[left]{$\mathsf{cl}$} (m-2-1);
\draw  (m-2-1) edge[->>] (m-2-2);
\draw  (m-1-2) edge[->] node[right]{$\mathsf{cl}$} (m-2-2);
\end{tikzpicture}
\]
By Theorem \ref{thm:moduleComparison} the left-hand vertical map is an isomorphism. Surjectivity of the right-hand vertical map follows. To complete the proof, since $\mathbf{M}_e^{\sigma, \theta \mhyphen st}$ is a Deligne-Mumford stack we can apply \cite[Theorem 4]{edidin1998}  to get isomorphisms
\[
A^{\bullet}_{\mathsf{G}_e^{\sigma}}(R_e^{\sigma, \theta \mhyphen st})_{\mathbb{Q}} \simeq A^{\bullet}(\mathbf{M}_e^{\sigma, \theta \mhyphen st})_{\mathbb{Q}} \simeq A^{\bullet}(\mathfrak{M}_e^{\sigma, \theta \mhyphen st})_{\mathbb{Q}}
\]
and similarly for cohomology groups with their mixed Hodge structures.
\end{proof}

In the setting of ordinary quiver representations Corollary \ref{cor:purePart} can be strengthened as follows.

\begin{Thm}
\label{thm:purePartOrdinary}
Let $Q$ be a quiver with stability $\theta$. For each $d \in \Lambda_Q^+$ the cycle map $\mathsf{cl}: A^{\bullet}(\mathfrak{M}_d^{\theta \mhyphen st})_{\mathbb{Q}} \rightarrow H^{\bullet}(\mathfrak{M}_d^{\theta \mhyphen st})$ surjects onto the pure part. Moreover, if $Q$ is the double of a quiver, then $\mathsf{cl}: A^{\bullet}(\mathfrak{M}_d^{\theta \mhyphen st})_{\mathbb{Q}} \rightarrow H^{\bullet}(\mathfrak{M}_d^{\theta \mhyphen st})$ is an isomorphism.
\end{Thm}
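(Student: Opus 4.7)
My plan is to transplant the proof of Corollary \ref{cor:purePart} to the ordinary setting. The ordinary HN stratification of $R_d$ due to \cite{reineke2003} is $\mathsf{GL}_d$-equivariantly perfect by the Atiyah-Bott argument used in Proposition \ref{prop:noOddCohom} (its ordinary version is carried out in \cite{harada2011} and \cite{brion2012b}), so $H^\bullet_{\mathsf{GL}_d}(R_d^{\theta \mhyphen ss})$ vanishes in odd degrees and is pure of Hodge-Tate type. The restriction map factors through a surjection $H^\bullet_{\mathsf{GL}_d}(R_d^{\theta \mhyphen ss}) \twoheadrightarrow PH^\bullet_{\mathsf{GL}_d}(R_d^{\theta \mhyphen st})$ by the method of \cite{chen2014}, and the Chow-cohomology algebra isomorphism $\mathsf{cl}_{alg} : \mathcal{A}_{Q,\mu}^{\theta \mhyphen ss} \xrightarrow[]{\sim} \mathcal{H}_{Q,\mu}^{\theta \mhyphen ss}$ is \cite[Theorem 5.1]{franzen2015b}. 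Chasing the commutative square with these two restriction maps as rows and the cycle maps as vertical arrows (the left vertical being the isomorphism $\mathsf{cl}_{alg}$) yields surjectivity of $\mathsf{cl}$ onto the pure part. Descent to the coarse moduli scheme via the $\mathsf{PGL}_d$-torsor $R_d^{\theta \mhyphen st} \to \mathfrak{M}_d^{\theta \mhyphen st}$ together with \cite[Theorem 4]{edidin1998} yields $A^\bullet_{\mathsf{GL}_d}(R_d^{\theta \mhyphen st})_{\mathbb{Q}} \simeq A^\bullet(\mathfrak{M}_d^{\theta \mhyphen st})_{\mathbb{Q}}$ and similarly for cohomology with mixed Hodge structures, completing the first claim.

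For the second assertion, suppose $Q = \bar{Q}_0$ is a double, so that the Euler form is symmetric and $R_d = T^* R_d^{Q_0}$. Then $\mathfrak{M}_d^{\theta \mhyphen st}$ is a smooth holomorphic-symplectic quasi-projective variety. Surjectivity of $\mathsf{cl}$ follows from the first claim together with purity of $H^\bullet(\mathfrak{M}_d^{\theta \mhyphen st})$; this is the standard purity statement for moduli of representations of symmetric quivers in the spirit of \cite{davison2016a}, and it can also be read off directly from Proposition \ref{prop:noOddCohom}'s ordinary analogue once one notes that, for a double, the restriction $H^\bullet_{\mathsf{GL}_d}(R_d^{\theta \mhyphen ss}) \to H^\bullet(\mathfrak{M}_d^{\theta \mhyphen st})$ is surjective. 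For injectivity, my plan is to exploit the cotangent-scaling $\mathbb{C}^\times$-action on $T^*R_d^{Q_0}$: it commutes with $\mathsf{GL}_d$, contracts $R_d$ onto the zero section $R_d^{Q_0}$, and descends to $\mathfrak{M}_d^{\theta \mhyphen st}$. A Białynicki-Birula-type decomposition into attracting cells over the fixed locus would then force the rational Chow and rational cohomology of $\mathfrak{M}_d^{\theta \mhyphen st}$ to agree in each graded dimension, upgrading the surjection $\mathsf{cl}$ to an isomorphism.

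The principal obstacle is this Białynicki-Birula step: $\mathfrak{M}_d^{\theta \mhyphen st}$ is only quasi-projective, so the $\mathbb{C}^\times$-limits need not exist globally, and one must either verify that the contraction stays in the stable locus or pass to a semi-projective completion. A purely algebraic alternative, which I would develop in parallel, is a Hilbert-series comparison via a Reineke-type wall-crossing formula for ordinary quivers: by \cite[Theorem 5.1]{franzen2015b} the Chow and cohomological Hall algebras of $Q$ are isomorphic in each graded piece, so once surjectivity of $\mathsf{cl}$ onto $H^\bullet(\mathfrak{M}_d^{\theta \mhyphen st})$ is in hand, the identical wall-crossing identities satisfied by the Poincaré polynomials of $A^\bullet(\mathfrak{M}_d^{\theta \mhyphen st})_{\mathbb{Q}}$ and $H^\bullet(\mathfrak{M}_d^{\theta \mhyphen st})$ force degreewise injectivity.
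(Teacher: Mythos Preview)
Your treatment of the first assertion matches the paper's: transplant the argument of Corollary~\ref{cor:purePart} to the ordinary setting, using equivariant perfection of the HN stratification and the isomorphism $\mathsf{cl}_{alg}$ from \cite{franzen2015b}.

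For the second assertion there is a genuine gap. Both routes you propose for injectivity are incomplete. The Bia{\l}ynicki--Birula approach fails for exactly the reason you flag: as $t\to 0$ under the cotangent scaling the backward arrows vanish, and the limiting representation is in general not $\theta$-stable for the double, so limits escape $\mathfrak{M}_d^{\theta\mhyphen st}$ and no cell decomposition results. Your Hilbert-series alternative presupposes surjectivity of $\mathsf{cl}$ onto all of $H^\bullet(\mathfrak{M}_d^{\theta\mhyphen st})$, hence purity of the latter; but your justification of purity (``the restriction $H^\bullet_{\mathsf{GL}_d}(R_d^{\theta\mhyphen ss})\to H^\bullet(\mathfrak{M}_d^{\theta\mhyphen st})$ is surjective for a double'') is precisely the point at issue and is not a consequence of the ordinary analogue of Proposition~\ref{prop:noOddCohom}. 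Moreover, you never specify an identity that determines the Poincar\'e polynomial of $H^\bullet(\mathfrak{M}_d^{\theta\mhyphen st})$ rather than that of its pure part.

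The paper avoids all of this by reducing to trivial stability. For $\theta=0$ the isomorphism is already in the literature: \cite[Theorem~2.2]{chen2014} identifies $PH^\bullet(\mathfrak{M}_d^{st})$ with the cohomological DT invariant $V^{\mathsf{prim}}_{Q,d}$, and \cite[\S 9]{franzen2015b} identifies $A^\bullet(\mathfrak{M}_d^{st})_{\mathbb{Q}}$ with the same object via the cycle map. The passage to arbitrary $\theta$ then uses the open embedding $\mathfrak{M}_d^{st}\hookrightarrow\mathfrak{M}_d^{\theta\mhyphen st}$ together with the key input you are missing, \cite[Theorem~9.2]{franzen2015b}: the restriction $A^\bullet(\mathfrak{M}_d^{\theta\mhyphen st})_{\mathbb{Q}}\to A^\bullet(\mathfrak{M}_d^{st})_{\mathbb{Q}}$ is an \emph{isomorphism} (stability independence of Chow groups for symmetric quivers). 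A commutative-square chase with this isomorphism on top and the $\theta=0$ cycle-map isomorphism on the right forces the cycle map at $\theta$ to be injective, completing the proof.
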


\begin{proof}
The first statement is proved in the same way as Corollary \ref{cor:purePart}. So assume that $Q$ is a double quiver. In the case of trivial stability it is proved in \cite[Theorem 2.2]{chen2014} that the restriction map $H_{\mathsf{GL}_d}^{\bullet}(R_d) \rightarrow H_{\mathsf{GL}_d}^{\bullet}(R_d^{st})$ induces an isomorphism $V^{\mathsf{prim}}_{Q,d} \xrightarrow[]{\sim} PH^{\bullet}(\mathfrak{M}_d^{st})\{\chi(d,d) \slash 2 \}$, where $V^{\mathsf{prim}}_{Q,d}$ denotes the cohomological Donaldson-Thomas invariant of $Q$ \cite{efimov2012}. On the other hand, it is proved in \cite[\S 9]{franzen2015b} that the cycle map defines an isomorphism $A^{\bullet}(\mathfrak{M}^{st}_d)_{\mathbb{Q}}\{\chi(d,d) 
\slash 2 \} \xrightarrow[]{\sim} V^{\mathsf{prim}}_{Q,d}$, where $A^k(\mathfrak{M}^{st}_d)_{\mathbb{Q}}$ is given $\mathbb{Z}$-degree $2k$.

For an arbitrary stability $\theta$, the open embedding $\mathfrak{M}_d^{st} \hookrightarrow \mathfrak{M}_d^{\theta \mhyphen st}$ induces a commutative diagram
\[
\begin{tikzpicture}
  \matrix (m) [matrix of math nodes,nodes={anchor=center},row sep=3em,column sep=3em,minimum width=1.4em] {
A^k(\mathfrak{M}_d^{\theta \mhyphen st})_{\mathbb{Q}} & A^k(\mathfrak{M}_d^{st})_{\mathbb{Q}} \\
PH^{2k}(\mathfrak{M}_d^{\theta \mhyphen st}) & PH^{2k}(\mathfrak{M}_d^{st}) \\};
\draw  (m-1-1) edge[->>] (m-1-2);
\draw  (m-1-1) edge[->>] node[left]{$\mathsf{cl}$} (m-2-1);
\draw  (m-2-1) edge[->] (m-2-2);
\draw  (m-1-2) edge[->] node[right]{$\mathsf{cl}$} (m-2-2);
\end{tikzpicture}
\]
whose top horizontal map is an isomorphism by \cite[Theorem 9.2]{franzen2015b} and whose right vertical map is an isomorphism by the discussion above. This implies that the left vertical map is injective and hence is an isomorphism.
\end{proof}

\subsection{Restriction to $\sigma$-stable representations}

In this section we relate $\mathcal{B}_Q^{\theta \mhyphen ss}$ to the rational Chow groups of moduli spaces of $\sigma$-stable self-dual representations.

Define
\[
\mathcal{B}_Q^{\theta \mhyphen st} = \bigoplus_{e \in \Lambda_Q^{\sigma,+}} A^{\bullet}_{\mathsf{G}_e^{\sigma}}(R_e^{\sigma, \theta \mhyphen st})_{\mathbb{Q}}\{ \mathcal{E}(e) \slash 2\}  \in D^{lb}(\mathsf{Vect}_{\mathbb{Z}})_{\Lambda_Q^{\sigma,+}}.
\]
The open inclusions $R_e^{\sigma, \theta \mhyphen st} \hookrightarrow R_e^{\sigma, \theta \mhyphen ss}$ induce a surjection $\mathcal{B}_Q^{\theta \mhyphen ss} \twoheadrightarrow \mathcal{B}_Q^{\theta \mhyphen st}$.

\begin{Prop}
\label{prop:kernelRestriction}
For each $e \in \Lambda_Q^{\sigma,+}$ the kernel of the restriction map $\mathcal{B}_{Q,e}^{\theta \mhyphen ss} \twoheadrightarrow \mathcal{B}_{Q,e}^{\theta \mhyphen st}$ is equal to
\[
\sum_{\substack{(d^{\prime},e^{\prime}) \in \Lambda_{Q,\mu=0}^+ \times \Lambda_Q^{\sigma,+} \\ H(d^{\prime}) + e^{\prime} = e, \, d^{\prime} \neq 0}} \mathcal{A}_{Q,d^{\prime}}^{\theta \mhyphen ss} \star \mathcal{B}_{Q,e^{\prime}}^{\theta \mhyphen ss}.
\]
\end{Prop}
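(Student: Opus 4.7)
The plan is to invoke the $\mathsf{G}_e^\sigma$-equivariant localization exact sequence in rational Chow theory applied to the open inclusion $R_e^{\sigma,\theta \mhyphen st} \hookrightarrow R_e^{\sigma,\theta \mhyphen ss}$ with closed $\mathsf{G}_e^\sigma$-invariant complement $Z := R_e^{\sigma,\theta \mhyphen ss} \setminus R_e^{\sigma,\theta \mhyphen st}$. Using smoothness of $R_e^{\sigma,\theta\mhyphen ss}$ to convert between Borel-Moore and Chow cohomology groups (as in the proof of Theorem~\ref{thm:moduleComparison}), the kernel of the restriction map is the image of $\iota_*: A_\bullet^{\mathsf{G}_e^\sigma}(Z)_{\mathbb{Q}} \to A_\bullet^{\mathsf{G}_e^\sigma}(R_e^{\sigma,\theta \mhyphen ss})_{\mathbb{Q}}$, and the problem reduces to identifying this image with the sum of products on the right-hand side.

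The ``$\supseteq$'' containment is the easy direction. For each non-zero $d' \in \Lambda_{Q,\mu=0}^+$ with $e' := e - H(d') \in \Lambda_Q^{\sigma,+}$, the proper map $\pi_2^{\sigma}:\mathbf{M}_{d',e'}^{\sigma,\theta\mhyphen ss} \to \mathbf{M}_e^{\sigma,\theta\mhyphen ss}$ factors set-theoretically through $Z$, since any representation in its image admits a non-trivial isotropic semistable subrepresentation of slope $0$. Because $\pi_1 \times \pi_3^{\sigma}$ is a homotopy equivalence and hence induces an isomorphism on rational equivariant Chow groups, the action $\mathcal{A}_{Q,d'}^{\theta\mhyphen ss} \star \mathcal{B}_{Q,e'}^{\theta\mhyphen ss}$ coincides with the image of $(\pi_2^{\sigma})_*$, which lies in the image of $\iota_*$ by the factorization through $Z$.

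The ``$\subseteq$'' containment is where the real work is. The starting observation is that any $M \in Z$ admits a non-zero isotropic subrepresentation $U$ of slope $0$; since $M$ is $\sigma$-semistable, hence semistable of slope $0$ as an ordinary representation by Proposition~\ref{prop:sdStability}, such a $U$ is automatically ordinary semistable (its HN slopes all vanish), and $M \git U$ is automatically $\sigma$-semistable as a self-dual subquotient of an ordinary semistable of slope $0$. Thus $Z$ is set-theoretically covered by the images of the maps $\pi_2^{\sigma}$ as $d'$ ranges over non-zero elements of $\Lambda_{Q,\mu=0}^+$ with $H(d') \leq e$. To upgrade this to Chow groups, I would stratify $Z$ by locally closed $\mathsf{G}_e^\sigma$-invariant subschemes mimicking the construction of Proposition~\ref{prop:sdHNStrat}, with each stratum isomorphic, as in \eqref{eq:hnStrataIsom}, to a twisted product $\mathsf{G}_e^{\sigma} \times_{\mathsf{G}_{d^{\bullet},e'}^{\sigma}} (\text{open subscheme of a trivial vector bundle over } R_{d^1}^{\theta\mhyphen ss}\times\cdots\times R_{d^r}^{\theta\mhyphen ss}\times R_{e'}^{\sigma,\theta \mhyphen ss})$, indexed by a slope-$0$ refinement of the polygon order. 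Running the localization sequence through this filtration and applying the projection formula expresses every class in $\mathrm{Im}(\iota_*)$ as a sum of pushforwards along the various $\pi_2^{\sigma}$, each of which is an $\mathcal{A} \star \mathcal{B}$ product by the computation of the action.

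The main obstacle will be the construction of the stratification of $Z$ itself: unlike the positive-slope situation of Section~\ref{sec:hnStratifications}, no canonical isotropic subrepresentation is intrinsically attached to a point of $Z$, so one must choose a coherent slope-$0$ Jordan-H\"{o}lder invariant, for instance the dimension vector of the socle inside the maximal isotropic slope-$0$ subrepresentation, to pin down locally closed strata of the stated form. Establishing the corresponding closure order and verifying that the strata fit together via the localization sequence in the manner needed for the induction to close parallels the arguments of \cite[\S 9]{franzen2015b} but must be carried out with isotropic flags and self-dual quotients in place of ordinary flags.
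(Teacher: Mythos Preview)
Your localization setup and the ``$\supseteq$'' argument are fine. For ``$\subseteq$'' you are making the problem harder than necessary, and the difficulty you flag as ``the main obstacle'' --- constructing a locally closed stratification of $Z = R_e^{\sigma,\theta\mhyphen ss} \setminus R_e^{\sigma,\theta\mhyphen st}$ indexed by some Jordan--H\"older/socle invariant --- is exactly what the paper's proof avoids.

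The paper does not stratify $Z$ at all. Instead it observes that $Z$ is the union, over the indexing pairs $(d',e')$, of the \emph{closed} subsets $R_e^{\sigma,(d',e'),\theta\mhyphen ss}$, each of which is the $\mathsf{G}_e^\sigma$-saturation of $R_{d',e'}^{\sigma,\theta\mhyphen ss}$ and hence the image of the proper map from $\mathsf{G}_e^\sigma \times_{\mathsf{G}_{d',e'}^\sigma} R_{d',e'}^{\sigma,\theta\mhyphen ss}$. This is precisely the situation handled by \cite[Lemma~8.2]{franzen2015b}: when a closed $\mathsf{G}$-invariant subset is covered by images of proper maps from smooth sources, the pushforwards from those sources jointly surject onto the kernel of restriction to the open complement. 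No disjoint decomposition, no choice of canonical isotropic subrepresentation, no closure-order induction is required. After that, the identification of each pushforward with $\mathcal{A}_{Q,d'}^{\theta\mhyphen ss} \star \mathcal{B}_{Q,e'}^{\theta\mhyphen ss}$ is the same affine-bundle and K\"unneth bookkeeping already carried out in Theorem~\ref{thm:moduleComparison}.

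So your route is not wrong in spirit, but the part you leave undone is genuinely nontrivial (a Luna-type stratification in the self-dual setting), whereas the paper's use of a closed cover plus \cite[Lemma~8.2]{franzen2015b} sidesteps it entirely. If you want to salvage your argument, the simplest fix is to drop the locally closed stratification and invoke that lemma instead.
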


\begin{proof}

	By definition, the locus $R_e^{\sigma, \theta \mhyphen ss} - R_e^{\sigma, \theta \mhyphen st}$ of properly semistable self-dual representations is the union
	$$
		\bigcup_{\substack{(d^{\prime},e^{\prime}) \in \Lambda_{Q,\mu=0}^+ \times \Lambda_Q^{\sigma,+} \\ H(d^{\prime}) + e^{\prime} = e, \, d^{\prime} \neq 0}} R_e^{\sigma, (d^{\prime}, e^{\prime}),  \theta \mhyphen ss}
	$$
	of the subsets $R_e^{\sigma, (d^{\prime}, e^{\prime}),  \theta \mhyphen ss}$ of semistable self-dual representations of dimension vector $e$ which possess an isotropic subrepresentation of dimension vector $d^{\prime}$. From the proof of Proposition \ref{prop:sdHNStrat} we know that $R_e^{\sigma, (d^{\prime}, e^{\prime}),  \theta \mhyphen ss}$ is the $\mathsf{G}_e^{\sigma}$-saturation of $R_{d^{\prime}, e^{\prime}}^{\sigma, \theta \mhyphen ss}$ and that $R_{d^{\prime}, e^{\prime}}^{\sigma, \theta \mhyphen ss}$ is a closed subset of $R_e^{\sigma, \theta \mhyphen ss}$. We can therefore apply \cite[Lemma 8.2]{franzen2015b} to conclude that
	\begin{equation} \label{eqn}
		\bigoplus_{\substack{(d^{\prime},e^{\prime}) \in \Lambda_{Q,\mu=0}^+ \times \Lambda_Q^{\sigma,+} \\ H(d^{\prime}) + e^{\prime} = e, \, d^{\prime} \neq 0}} A_k^{\mathsf{G}_e^{\sigma}}(\mathsf{G}_e^{\sigma} \times_{\mathsf{G}_{d',e'}^\sigma} R_{d^{\prime}, e^{\prime}}^{\sigma, \theta \mhyphen ss} )_{\mathbb{Q}} \to A_k^{\mathsf{G}_e^{\sigma}}(R_e^{\sigma, \theta \mhyphen ss})_{\mathbb{Q}} \to A_k^{\mathsf{G}_e^{\sigma}}(R_e^{\sigma, \theta \mhyphen st})_{\mathbb{Q}} \to 0
	\end{equation}
	is an exact sequence. The affine bundles
	$$
		\mathsf{G}_e^\sigma \times_{\mathsf{GL}_{d'} \times \mathsf{G}_{e'}^\sigma} (R_{d'}^{\theta \mhyphen ss} \times R_{e'}^{\sigma, \theta \mhyphen ss}) \leftarrow  \mathsf{G}_e^\sigma \times_{\mathsf{GL}_{d'} \times \mathsf{G}_{e'}^\sigma} R_{d^{\prime}, e^{\prime}}^{\sigma, \theta \mhyphen ss} \to \mathsf{G}_e^\sigma  \times_{\mathsf{G}_{d',e'}^\sigma} R_{d^{\prime}, e^{\prime}}^{\sigma, \theta \mhyphen ss}
	$$
	give rise to isomorphisms
	\begin{align*}
		A_k^{\mathsf{G}_e^\sigma}( \mathsf{G}_e^{\sigma} \times_{\mathsf{G}_{d',e'}^\sigma} R_{d^{\prime}, e^{\prime}}^{\sigma, \theta \mhyphen ss} ) &\simeq A_{\mathsf{G}_{d',e'}^\sigma}^{2\dim R_e^{\sigma} - k +\chi(d',e')+\mathcal{E}(\sigma(d'))}(R_{d^{\prime}, e^{\prime}}^{\sigma, \theta \mhyphen ss}) \\
		&\simeq A_{\mathsf{GL}_{d'} \times \mathsf{G}_{e'}^\sigma}^{2\dim R_e^{\sigma} - k +\chi(d',e')+\mathcal{E}(\sigma(d'))}(R_{d'}^{\theta \mhyphen ss} \times R_{e'}^{\sigma,\theta \mhyphen ss}).
	\end{align*}
It was shown in the proof of Theorem \ref{thm:moduleComparison} that the exterior product map
	$$
	A_{\mathsf{GL}_{d'}}^{\bullet}(R_{d'}^{\theta \mhyphen ss})_{\mathbb{Q}} \otimes A_{\mathsf{G}_{e'}^\sigma}^{\bullet}(R_{e'}^{\sigma,\theta \mhyphen ss})_{\mathbb{Q}} \to A_{\mathsf{GL}_{d'} \times \mathsf{G}_{e'}^\sigma}^{\bullet}(R_{d'}^{\theta \mhyphen ss} \times R_{e'}^{\sigma,\theta \mhyphen ss})_{\mathbb{Q}}
	$$
	 is an isomorphism. Combining these isomorphisms and taking into account the grading shifts, the leftmost map of the sequence (\ref{eqn}) identifies with the ChowHA action map
	 \[	  \bigoplus_{\substack{(d^{\prime},e^{\prime}) \in \Lambda_{Q,\mu=0}^+ \times \Lambda_Q^{\sigma,+} \\ H(d^{\prime}) + e^{\prime} = e, \, d^{\prime} \neq 0}} \mathcal{A}_{Q,d^{\prime}}^{\theta \mhyphen ss} \boxtimes^{S \mhyphen \mathsf{tw}} \mathcal{B}_{Q,e^{\prime}}^{\theta \mhyphen ss} \xrightarrow[]{\star} \mathcal{B}_{Q,e}^{\theta \mhyphen ss}.
	 \]
This completes the proof.
\end{proof}

\subsection{Chow theoretic wall-crossing formulas}

In \cite{mbyoung2015} a motivic orientifold wall-crossing formula was proved using finite field Hall algebras and their representations, generalizing and explaining formulas which appeared previously in the string theory literature \cite{denef2010}. In this section we lift the wall-crossing formula to Chow theoretic and cohomological Hall modules.

By applying \cite[Corollary 5.4]{franzen2015b} to the $\sigma$-HN stratification of $R_e^{\sigma}$ we obtain an isomorphism 
\begin{equation}
\label{eq:hnDecomp}
	A_{\mathsf{G}_e^\sigma}^\bullet(R_e^\sigma) \simeq \bigoplus_{(d^\bullet,e^\infty) \in \mathsf{HN}^\sigma(e)} A_{\mathsf{G}_e^\sigma}^{\bullet-\codim_{R_e^{\sigma}}(R_{d^\bullet,e^\infty}^{\sigma,HN})}(R_{d^\bullet,e^\infty}^{\sigma,HN}).
\end{equation}
Using the isomorphism \eqref{eq:hnStrataIsom} we see that
$$
	\codim_{R_e^\sigma}(R_{d^\bullet,e^\infty}^{\sigma,HN}) = \codim_{R_e^\sigma}(R_{d^\bullet,e^\infty}^\sigma) - \dim \mathsf{G}_e^\sigma + \dim \mathsf{G}_{d^\bullet,e^\infty}.
$$
By displaying $R_{d^\bullet,e^\infty}^\sigma$ in terms of matrices, analogous to the description of $R_{d,e}^\sigma$ given in \cite[\S 3.1]{mbyoung2016b}, we compute
\begin{equation}
\label{eq:codimStrata}
\codim_{R_e^{\sigma}}(R_{d^\bullet,e^\infty}^{\sigma,HN})=	- \sum_{1 \leq k < l \leq r} \chi(d^k,d^l) - \chi(d,e^\infty) - \mathcal{E}(\sigma(d))
\end{equation}
where we have set $d=d^1+ \cdots + d^r$.

In particular, by considering the open stratum $R_e^{\sigma, \theta \mhyphen ss} \subset R_e^{\sigma}$ we obtain from \eqref{eq:hnDecomp} a vector space splitting of the surjection $\mathcal{B}_Q \twoheadrightarrow \mathcal{B}^{\theta \mhyphen ss}_Q$. We use this splitting to regard $\mathcal{B}_Q^{\theta \mhyphen ss}$ as a subobject of $\mathcal{B}_Q$. In the same way, we can consider $\mathcal{A}_Q^{\theta \mhyphen ss}$ as a subobject of $\mathcal{A}_Q$.

\begin{Thm}
\label{thm:hnFactor}
Let $\theta$ be a $\sigma$-compatible stability. Then the slope ordered ChowHA action map
\[
\overset{\hspace{-2.5em}\longleftarrow}{\boxtimes^{\mathsf{tw}}_{\mu \in \mathbb{Q}_{> 0}}} \mathcal{A}^{\theta \mhyphen ss}_{Q, \mu} \boxtimes^{S \mhyphen \mathsf{tw}} \mathcal{B}^{\theta \mhyphen ss}_Q \xrightarrow[]{\star} \mathcal{B}_Q
\]
is an isomorphism in $D^{lb}(\mathsf{Vect}_{\mathbb{Z}})_{\Lambda_Q^{\sigma,+}}$. The analogous statement for $\mathcal{M}_Q$ also holds.
\end{Thm}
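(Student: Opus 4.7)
The plan is to apply the Chow decomposition \eqref{eq:hnDecomp} associated to the $\sigma$-HN stratification, identify each summand with a tensor product of semistable Hall algebra and module factors, and verify that the assembled isomorphism coincides with the iterated action map. For $e \in \Lambda_Q^{\sigma,+}$, applying \eqref{eq:hnDecomp} to the $\sigma$-HN stratification of Proposition \ref{prop:sdHNStrat} yields an isomorphism
\[
\mathcal{B}_{Q,e} \cong \bigoplus_{(d^\bullet,e^\infty) \in \mathsf{HN}^\sigma(e)} A^{\bullet - \codim}_{\mathsf{G}_e^\sigma}\bigl(R^{\sigma,HN}_{d^\bullet,e^\infty}\bigr) \{\mathcal{E}(e) \slash 2\},
\]
with codimension given by \eqref{eq:codimStrata}.

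The next step is to identify each summand of this decomposition with a tensor product. The parabolic induction isomorphism \eqref{eq:hnStrataIsom}, the affine bundle projection from $R^\sigma_{d^\bullet, e^\infty}$ onto $R^{\theta\mhyphen ss}_{d^1} \times \cdots \times R^{\theta\mhyphen ss}_{d^r} \times R^{\sigma,\theta\mhyphen ss}_{e^\infty}$ (combined with the triviality of the rational equivariant Chow theory of the unipotent radical of $\mathsf{G}^\sigma_{d^\bullet, e^\infty}$), and the equivariant Chow K\"unneth decomposition already invoked in the proof of Theorem \ref{thm:moduleComparison}, together identify the $(d^\bullet,e^\infty)$-summand with
\[
\mathcal{A}^{\theta\mhyphen ss}_{Q,d^1} \otimes \cdots \otimes \mathcal{A}^{\theta\mhyphen ss}_{Q,d^r} \otimes \mathcal{B}^{\theta\mhyphen ss}_{Q,e^\infty}
\]
up to a $\mathbb{Z}$-grading shift. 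A direct bookkeeping argument using \eqref{eq:codimStrata}, the definitions of $\boxtimes^{\mathsf{tw}}$ and $\boxtimes^{S\mhyphen \mathsf{tw}}$, and the internal twists $\chi(d^i, d^i)\slash 2$ and $\mathcal{E}(e^\infty) \slash 2$ shows that this shift is precisely the one produced by the slope-ordered product on the $(d^\bullet, e^\infty)$-isotypic component, so the decomposition assembles into a graded isomorphism with the left-hand side of the theorem.

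The main technical step, and principal obstacle, is to verify that this assembled vector space isomorphism coincides with the iterated slope-ordered action $\star$. The correspondence defining the action passes through the flag stack $\mathbf{M}^{\sigma,\theta\mhyphen ss}_{d^\bullet,e^\infty}$, whose underlying scheme, via \eqref{eq:hnStrataIsom}, is exactly the $\sigma$-HN stratum $R^{\sigma,HN}_{d^\bullet,e^\infty}$ mapped into $R_e^\sigma$ by the proper action morphism. Tracking the proper pushforward $\pi^\sigma_{2!}$ as a Gysin inclusion of this stratum into $R_e^\sigma$, and the homotopy-equivalence pullback $(\pi_1 \times \pi^\sigma_3)^*$ through the affine bundle onto the product of semistable factors, one recognizes $\star$ term by term as the stratum inclusion in the HN decomposition. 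Iterating over the HN length recovers the entire slope-ordered action. The careful verification of grading shifts and the tracking of refined pullbacks through the tower of flag stacks is where essentially all of the technical difficulty lies.

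For the cohomological statement, Proposition \ref{prop:noOddCohom} furnishes the exact analogue of \eqref{eq:hnDecomp} for equivariant cohomology, with degrees doubled. Combining this with the Chow theoretic result just proved, Theorem \ref{thm:moduleComparison}, and the algebra isomorphism $\mathsf{cl}_{alg}$ of \cite{franzen2015b}, together with the compatibility of cycle maps with proper pushforward and refined pullback, transports the isomorphism to the cohomological Hall module $\mathcal{M}_Q$.
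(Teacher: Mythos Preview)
Your proposal is correct and follows essentially the same route as the paper: invoke the decomposition \eqref{eq:hnDecomp}, identify each stratum's Chow group with a tensor product of semistable factors via \eqref{eq:hnStrataIsom} and the K\"unneth argument from Theorem \ref{thm:moduleComparison}, match the codimension \eqref{eq:codimStrata} against the twists in $\boxtimes^{\mathsf{tw}}$ and $\boxtimes^{S\mhyphen\mathsf{tw}}$, and then check that the resulting isomorphism is realized by the slope-ordered action. The paper packages your ``tracking'' paragraph into a single commutative square (diagram \eqref{d:multVsHN}) comparing the HN sections with the trivial-stability multiplication, and obtains the cohomological statement by transport along Theorem \ref{thm:moduleComparison} rather than by repeating the argument via Proposition \ref{prop:noOddCohom}; one small imprecision in your write-up is that the trivial-stability action passes through the \emph{full} flag stack $\mathbf{M}^{\sigma}_{d^\bullet,e^\infty}$, not $\mathbf{M}^{\sigma,\theta\mhyphen ss}_{d^\bullet,e^\infty}$, and it is exactly the commutativity of \eqref{d:multVsHN} that shows the composite with the HN sections factors through the stratum.
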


\begin{proof}
Identify $A_{\mathsf{G}_e^\sigma}^\bullet(R_{d^\bullet,e^\infty}^{\sigma,HN})_{\mathbb{Q}}$ with the tensor product $A_{\mathsf{GL}_{d^1}}^\bullet(R_{d^1}^{\theta \mhyphen ss})_{\mathbb{Q}} \otimes \cdots \otimes A_{\mathsf{GL}_{d^r}}^\bullet(R_{d^r}^{\theta \mhyphen ss})_{\mathbb{Q}} \otimes A_{\mathsf{G}_{e^\infty}^\sigma}^\bullet(R_{e^\infty}^{\sigma,\theta \mhyphen ss})_{\mathbb{Q}}$ and consider the sections of the surjections
$$
	\Big( \bigotimes_{k=1}^r A_{\mathsf{GL}_{d^k}}^\bullet(R_{d^k})_{\mathbb{Q}} \Big) \otimes A_{\mathsf{G}_{e^\infty}^\sigma}^\bullet(R_{e^\infty}^{\sigma})_{\mathbb{Q}} \twoheadrightarrow \Big( \bigotimes_{k=1}^r A_{\mathsf{GL}_{d^k}}^\bullet(R_{d^k}^{\theta \mhyphen ss})_{\mathbb{Q}} \Big) \otimes A_{\mathsf{G}_{e^\infty}^\sigma}^\bullet(R_{e^\infty}^{\sigma,\theta \mhyphen ss})_{\mathbb{Q}}
$$
coming from the $\sigma$-HN stratification. This leads to a commutative diagram
\begin{equation}
	\label{d:multVsHN}
	\begin{tikzpicture}
		\matrix (m) [matrix of math nodes,nodes={anchor=center},row sep=2em,column sep=2em,minimum width=1.2em] {
		\displaystyle \Big( \bigotimes_{k=1}^r A_{\mathsf{GL}_{d^k}}^\bullet(R_{d^k}^{\theta \mhyphen ss})_{\mathbb{Q}} \Big) \otimes A_{\mathsf{G}_{e^\infty}^\sigma}^\bullet(R_{e^\infty}^{\sigma,\theta \mhyphen ss})_{\mathbb{Q}} & A_{\mathsf{G}_e^\sigma}^\bullet(R_{d^\bullet,e^\infty}^{\sigma,HN})_{\mathbb{Q}} \\
		\displaystyle \Big( \bigotimes_{k=1}^r A_{\mathsf{GL}_{d^k}}^\bullet(R_{d^k})_{\mathbb{Q}} \Big) \otimes A_{\mathsf{G}_{e^\infty}^\sigma}^\bullet(R_{e^\infty}^{\sigma})_{\mathbb{Q}} & A_{\mathsf{G}_e^\sigma}^{\bullet-\codim_{R_e^{\sigma}}(R_{d^\bullet,e^\infty}^{\sigma,HN})}(R_{d^\bullet,e^\infty}^{\sigma})_{\mathbb{Q}}  \\
		};
		\draw  (m-1-1) edge[->] node[above]{$\cong$} (m-1-2);
		\draw  (m-1-1) edge[->] (m-2-1);
		\draw  (m-2-1) edge[->] node[below]{$\star$} (m-2-2);
		\draw  (m-1-2) edge[->] (m-2-2);
	\end{tikzpicture}
\end{equation}
The vertical maps are the sections arising from the $\sigma$-HN stratifications. Comparing equation \eqref{eq:codimStrata} with the twists appearing in the monoidal products $\boxtimes^{\mathsf{tw}}$ and $\boxtimes^{S \mhyphen \mathsf{tw}}$, we conclude that the ChowHA action induces the desired isomorphism $\overset{\hspace{-2.5em}\longleftarrow}{\boxtimes^{\mathsf{tw}}_{\mu \in \mathbb{Q}_{> 0}}} \mathcal{A}^{\theta \mhyphen ss}_{Q, \mu} \boxtimes^{S \mhyphen \mathsf{tw}} \mathcal{B}^{\theta \mhyphen ss}_Q \xrightarrow[]{\star} \mathcal{B}_Q$. By Theorem \ref{thm:moduleComparison} the cohomological statement now also follows.
\end{proof}

If $Q$ is finite type or a cyclic affine $A_1$ quiver, a proof of Theorem \ref{thm:hnFactor} via a direct study of $\mathcal{M}_Q$ is given in \cite[Theorems 4.9 and 5.8]{mbyoung2016b}.

\section{Applications to orientifold Donaldson-Thomas theory}
\label{sec:oriDT}

\subsection{Orientifold DT invariants are Chow groups}

\label{sec:oriDTChow}

Recall that a quiver $Q$ is called symmetric if its Euler form $\chi$ is a symmetric bilinear form. If $Q$ is symmetric, then $\boxtimes^{\mathsf{tw}}$ reduces to the untwisted monoidal product and $\mathcal{A}_{Q,\mu}^{\theta \mhyphen ss}$ becomes a $\Lambda^+_{Q,\mu} \times \mathbb{Z}$-graded algebra. Moreover, up to a twist of the multiplication by a sign, the algebra $\mathcal{A}^{\theta \mhyphen ss}_{Q,\mu}$ is supercommutative \cite[\S 2.6]{kontsevich2011}.

Similarly, a quiver with involution and duality structure is called $\sigma$-symmetric if it is symmetric and $\sigma^* \mathcal{E} = \mathcal{E}$. In this case $\boxtimes^{S \mhyphen \mathsf{tw}}$ reduces to the untwisted monoidal module structure and $\mathcal{B}_{Q}^{\theta \mhyphen ss}$ is a $\Lambda_Q^{\sigma,+} \times \mathbb{Z}$-graded $\mathcal{A}_{Q,\mu=0}^{\theta \mhyphen ss}$-module. If $\mathcal{A}^{\theta \mhyphen ss}_{Q,\mu=0}$ is supercommutative without any twist, then $\mathcal{B}^{\theta \mhyphen ss}_Q$ is a super $\mathcal{A}^{\theta \mhyphen ss}_{Q,\mu=0}$-module. In general it is unknown if the supercommutative twist of $\mathcal{A}^{\theta \mhyphen ss}_{Q,\mu=0}$ can be lifted to $\mathcal{B}^{\theta \mhyphen ss}_Q$. In any case, we will not use the supercommutative twist in what follows.

Let $\mathcal{H}^{\theta \mhyphen ss}_{Q, \mu=0,+}$ be the augmentation ideal of $\mathcal{H}^{\theta \mhyphen ss}_{Q, \mu=0}$. In \cite{mbyoung2016b} the cohomological orientifold Donaldson-Thomas invariant of a $\sigma$-symmetric quiver was defined to be the $\Lambda_Q^{\sigma,+} \times \mathbb{Z}$-graded vector space
\[
W^{\mathsf{prim}, \theta}_Q= \mathcal{M}^{\theta \mhyphen ss}_Q \slash ( \mathcal{H}^{\theta \mhyphen ss}_{Q, \mu=0,+} \star \mathcal{M}^{\theta \mhyphen ss}_Q ).
\]
Denote by $W^{\mathsf{prim},\theta}_{Q,(e,l)}$ the degree $(e,l) \in \Lambda_Q^{\sigma,+} \times \mathbb{Z}$ summand of $W^{\mathsf{prim}}_Q$.
 
\begin{Thm}
\label{thm:sdChowInterpretation}
Let $Q$ be a $\sigma$-symmetric quiver with $\sigma$-compatible stability $\theta$. Then
\[
W^{\mathsf{prim},\theta}_{Q,(e,l)} \simeq \left\{
\begin{array}{ll}
A_{-\frac{1}{2}(l+\mathcal{E}(e))}(\mathfrak{M}_e^{\sigma, \theta \mhyphen st})_{\mathbb{Q}} & \mbox{ if } l+\mathcal{E}(e) \equiv 0 \mod 2,\\
0 & \mbox{ if } l+\mathcal{E}(e) \equiv 1\mod 2.
\end{array}
\right.
\]
In particular, if $W^{\mathsf{prim},\theta}_{Q,(e,l)}$ is non-trivial, then $\mathcal{E}(e) \leq l \leq -\mathcal{E}(e)$.
\end{Thm}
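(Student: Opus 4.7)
The plan is to reduce the computation of $W^{\mathsf{prim},\theta}_Q$ to the Chow group of the $\sigma$-stable moduli scheme by assembling the module comparison, the kernel description of the restriction map, and Poincar\'e duality.

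First, I would invoke Theorem \ref{thm:moduleComparison}: the cycle map is an isomorphism $\mathsf{cl}_{\mathsf{mod}}: \mathcal{B}^{\theta \mhyphen ss}_Q \xrightarrow{\sim} \mathcal{M}^{\theta \mhyphen ss}_Q$ intertwining the actions of the Chow and cohomological Hall algebras (which themselves correspond under $\mathsf{cl}_{\mathsf{alg}}$). Taking quotients by the augmentation-ideal action, this yields
\[
W^{\mathsf{prim},\theta}_Q \;\simeq\; \mathcal{B}^{\theta \mhyphen ss}_Q \big/ \bigl(\mathcal{A}^{\theta \mhyphen ss}_{Q,\mu=0,+} \star \mathcal{B}^{\theta \mhyphen ss}_Q\bigr)
\]
as $\Lambda_Q^{\sigma,+} \times \mathbb{Z}$-graded vector spaces. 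Next, in degree $e$ the submodule $\mathcal{A}^{\theta \mhyphen ss}_{Q,\mu=0,+} \star \mathcal{B}^{\theta \mhyphen ss}_Q$ is exactly the sum appearing in Proposition \ref{prop:kernelRestriction} (the condition $d' \neq 0$ combined with $H(d')+e'=e$ and slope zero corresponds precisely to augmentation-ideal elements). Hence the above quotient is canonically isomorphic to $\mathcal{B}^{\theta \mhyphen st}_Q$.

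Next I would unpack $\mathcal{B}^{\theta \mhyphen st}_{Q,e} = A^{\bullet}_{\mathsf{G}_e^\sigma}(R_e^{\sigma, \theta \mhyphen st})_{\mathbb{Q}} \{\mathcal{E}(e)/2\}$. Since $\mathbf{M}_e^{\sigma, \theta \mhyphen st} = [R_e^{\sigma, \theta \mhyphen st} / \mathsf{G}_e^\sigma]$ is a smooth Deligne-Mumford stack, \cite[Theorem 4]{edidin1998} identifies $A^{\bullet}_{\mathsf{G}_e^\sigma}(R_e^{\sigma, \theta \mhyphen st})_{\mathbb{Q}}$ with $A^{\bullet}(\mathfrak{M}_e^{\sigma, \theta \mhyphen st})_{\mathbb{Q}}$. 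I would then compute the dimension: inspecting the formula for $\mathcal{E}(e)$ for $e \in \Lambda_Q^{\sigma,+}$ and matching terms against $\dim \mathsf{G}_e^\sigma = \sum_{i \in Q_0^+} e_i^2 + \sum_{i \in Q_0^\sigma} \tfrac{e_i(e_i - s_i)}{2}$ and $\dim R_e^\sigma = \sum_{Q_1^+} e_i e_j + \sum_{Q_1^\sigma} \tfrac{e_i(e_i + s_i \tau_\alpha)}{2}$ shows that $\mathcal{E}(e) = \dim \mathsf{G}_e^\sigma - \dim R_e^\sigma$, whence $\dim \mathfrak{M}_e^{\sigma, \theta \mhyphen st} = -\mathcal{E}(e)$. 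Since $\mathfrak{M}_e^{\sigma, \theta \mhyphen st}$ has at worst finite quotient singularities, rational Poincar\'e duality gives $A^k(\mathfrak{M}_e^{\sigma, \theta \mhyphen st})_{\mathbb{Q}} \simeq A_{-\mathcal{E}(e) - k}(\mathfrak{M}_e^{\sigma, \theta \mhyphen st})_{\mathbb{Q}}$.

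Finally I would track the Tate twist $\{\mathcal{E}(e)/2\}$: an element in $\mathbb{Z}$-degree $l$ of $\mathcal{B}^{\theta \mhyphen st}_{Q,e}$ comes from $A^k$ with $2k = l - \mathcal{E}(e)$, forcing $l + \mathcal{E}(e)$ to be even; applying Poincar\'e duality converts $A^{(l-\mathcal{E}(e))/2}$ into $A_{-(l+\mathcal{E}(e))/2}$, yielding the claimed formula. The bounds $\mathcal{E}(e) \leq l \leq -\mathcal{E}(e)$ are then immediate from $0 \leq k \leq \dim \mathfrak{M}_e^{\sigma, \theta \mhyphen st} = -\mathcal{E}(e)$. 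The only delicate point is keeping the Tate twist conventions and the Poincar\'e duality shift consistent; all genuine geometric input has been absorbed into Theorem \ref{thm:moduleComparison} and Proposition \ref{prop:kernelRestriction}.
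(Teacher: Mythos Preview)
Your proposal is correct and follows essentially the same route as the paper: reduce to the Chow side via Theorem \ref{thm:moduleComparison}, identify the quotient with $\mathcal{B}_Q^{\theta \mhyphen st}$ using Proposition \ref{prop:kernelRestriction}, then pass to the coarse moduli scheme via \cite[Theorem 4]{edidin1998} and apply the dimension formula $\dim \mathfrak{M}_e^{\sigma,\theta \mhyphen st} = -\mathcal{E}(e)$ to convert codegree to degree. The only cosmetic difference is that you spell out the computation $\mathcal{E}(e) = \dim \mathsf{G}_e^\sigma - \dim R_e^\sigma$ explicitly, whereas the paper simply asserts the dimension.
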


\begin{proof}
By Theorem \ref{thm:moduleComparison} the equivariant cycle map induces an isomorphism
\[
\mathcal{B}^{\theta \mhyphen ss}_Q \slash  (\mathcal{A}^{\theta \mhyphen ss}_{Q, \mu=0,+} \star \mathcal{B}^{\theta \mhyphen ss}_Q) \xrightarrow[]{\sim} \mathcal{M}^{\theta \mhyphen ss}_Q \slash (\mathcal{H}^{\theta \mhyphen ss}_{Q, \mu=0,+} \star \mathcal{M}^{\theta \mhyphen ss}_Q)
\]
in $D^{lb}(\mathsf{Vect}_{\mathbb{Z}})_{\Lambda_Q^{\sigma,+}}$. Proposition \ref{prop:kernelRestriction} implies that
\begin{equation}
\label{eq:stableQuot}
\mathcal{B}_Q^{\theta \mhyphen st} \simeq \mathcal{B}^{\theta \mhyphen ss}_Q \slash ( \mathcal{A}^{\theta \mhyphen ss}_{Q, \mu=0,+} \star \mathcal{B}^{\theta \mhyphen ss}_Q ).
\end{equation}
It follows that we have an induced isomorphism $\mathcal{B}_Q^{\theta \mhyphen st} \xrightarrow[]{\sim} W^{\mathsf{prim},\theta}_Q$. By definition, the degree $(e, l) \in \Lambda_Q^{\sigma,+} \times \mathbb{Z}$ component $\mathcal{B}_{Q,(e,l)}^{\theta \mhyphen st}$ of $\mathcal{B}_Q^{\theta \mhyphen st}$ is trivial unless $l- \mathcal{E}(e)$ is even, in which case we have
\begin{eqnarray*}
\mathcal{B}_{Q,(e,l)}^{\theta \mhyphen st} &=& A^{\frac{1}{2}(l - \mathcal{E}(e))}_{\mathsf{G}_e^{\sigma}}(R_e^{\sigma, \theta \mhyphen st})_{\mathbb{Q}} \\
& \simeq & A^{\frac{1}{2}(l - \mathcal{E}(e))}(\mathbf{M}_e^{\sigma, \theta \mhyphen st})_{\mathbb{Q}} \\
& \simeq & A^{\frac{1}{2}(l - \mathcal{E}(e))}(\mathfrak{M}_e^{\sigma, \theta \mhyphen st})_{\mathbb{Q}} \\
& \simeq & A_{-\frac{1}{2}(l + \mathcal{E}(e))}(\mathfrak{M}_e^{\sigma, \theta \mhyphen st})_{\mathbb{Q}}.
\end{eqnarray*}
These isomorphisms follow from \cite[Theorem 4]{edidin1998} together with the fact that, if non-empty, the complex dimension of $\mathfrak{M}_e^{\sigma, \theta \mhyphen st}$ is $-\mathcal{E}(e)$. The final statement of the theorem also follows from this dimension formula.
\end{proof}

\begin{Cor}
\label{cor:intConj}
For each $e \in \Lambda_Q^{\sigma,+}$ the vector space $\bigoplus_{l \in \mathbb{Z}}W^{\theta, \mathsf{prim}}_{Q,(e,l)}$ is finite dimensional.
\end{Cor}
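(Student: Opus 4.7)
The plan is to deduce finite-dimensionality from Theorem \ref{thm:sdChowInterpretation} combined with Theorem \ref{thm:moduleComparison}. First, the final assertion of Theorem \ref{thm:sdChowInterpretation} says that $W^{\mathsf{prim},\theta}_{Q,(e,l)}$ vanishes unless $\mathcal{E}(e) \leq l \leq -\mathcal{E}(e)$. For a fixed $e$ this confines the sum to finitely many values of $l$, so it suffices to prove that each summand $W^{\mathsf{prim},\theta}_{Q,(e,l)}$ is finite dimensional.

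Next I would reduce finite dimensionality of $W^{\mathsf{prim},\theta}_{Q,(e,l)}$ to finite dimensionality of the semistable Chow piece $\mathcal{B}_{Q,(e,l)}^{\theta\mhyphen ss} = A^{\frac{1}{2}(l-\mathcal{E}(e))}_{\mathsf{G}_e^{\sigma}}(R_e^{\sigma, \theta\mhyphen ss})_{\mathbb{Q}}$. Indeed, by the identification $W^{\mathsf{prim},\theta}_Q \simeq \mathcal{B}_Q^{\theta\mhyphen st}$ established in the proof of Theorem \ref{thm:sdChowInterpretation}, and by the surjection $\mathcal{B}_Q^{\theta\mhyphen ss} \twoheadrightarrow \mathcal{B}_Q^{\theta\mhyphen st}$ coming from the open inclusion $R_e^{\sigma, \theta\mhyphen st} \hookrightarrow R_e^{\sigma, \theta\mhyphen ss}$, the graded piece $W^{\mathsf{prim},\theta}_{Q,(e,l)}$ is a quotient of $\mathcal{B}_{Q,(e,l)}^{\theta\mhyphen ss}$.

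Finally I would use Theorem \ref{thm:moduleComparison}, which gives the cycle map isomorphism
\[
\mathcal{B}_{Q,(e,l)}^{\theta\mhyphen ss} \xrightarrow[]{\sim} H^{l-\mathcal{E}(e)}_{\mathsf{G}_e^{\sigma}}(R_e^{\sigma, \theta\mhyphen ss}).
\]
For any fixed cohomological degree, the equivariant singular cohomology of a finite-type complex variety with respect to a linear algebraic group is finite dimensional, by the standard finite-dimensional approximation of the Borel construction. Hence $\mathcal{B}_{Q,(e,l)}^{\theta\mhyphen ss}$ is finite dimensional, and so is its quotient $W^{\mathsf{prim},\theta}_{Q,(e,l)}$. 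Together with the bounded range of $l$, this gives the claim.

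There is essentially no hard step here: the whole content of the corollary is packaged into Theorems \ref{thm:sdChowInterpretation} and \ref{thm:moduleComparison}. The only point worth double-checking is the standard input that equivariant cohomology of $R_e^{\sigma,\theta\mhyphen ss}$ is finite dimensional in each degree, but this is immediate from the fact that $R_e^{\sigma,\theta\mhyphen ss}$ is a quasi-projective $\mathsf{G}_e^{\sigma}$-variety.
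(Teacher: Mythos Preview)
Your argument is correct, but it takes a detour that the paper avoids. The paper treats the corollary as immediate from Theorem \ref{thm:sdChowInterpretation} alone: that theorem identifies $\bigoplus_{l \in \mathbb{Z}} W^{\mathsf{prim},\theta}_{Q,(e,l)}$ with the total rational Chow group $A_\bullet(\mathfrak{M}_e^{\sigma,\theta\mhyphen st})_{\mathbb{Q}}$, and the Chow group of a quasi-projective scheme of finite type is finite dimensional. So there is no need to pass back up to the semistable Chow module, invoke Theorem \ref{thm:moduleComparison}, and appeal to finite-dimensionality of equivariant cohomology in each degree. Your route works, and the standard inputs you cite are valid, but you are re-proving by hand something that the Chow-group description already hands you for free. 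In short: once you have invoked Theorem \ref{thm:sdChowInterpretation} for the bound on $l$, you may as well also use its main isomorphism to finish in one line.
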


The statement of Corollary \ref{cor:intConj} is known as the orientifold integrality conjecture. A direct proof in the case of trivial stability was given in \cite[Theorem 3.4]{mbyoung2016b}.

The motivic orientifold Donaldson-Thomas invariant of $Q$ is defined by
\[
\Omega_Q^{\sigma, \theta} (q^{\frac{1}{2}}, \xi) = \sum_{(e,l) \in \Lambda_Q^{\sigma,+} \times \mathbb{Z}}  \dim_{\mathbb{Q}} W_{Q,(e,l)}^{\mathsf{\mathsf{prim},\theta}} (-q^{\frac{1}{2}})^l \xi^e \in \mathbb{Z}[q^{\frac{1}{2}}, q^{-\frac{1}{2}}] \pser{ \xi }.
\]
Hence $\Omega_Q^{\sigma, \theta}$ is the generating series of shifted Chow theoretic Poincar\'{e} polynomials of $\mathfrak{M}_e^{\sigma, \theta \mhyphen st}$. If we define a normalized invariant by
\[
\overline{\Omega}_{Q,e}^{\sigma, \theta} = (-q^{\frac{1}{2}})^{-\mathcal{E}(e)}
\Omega_{Q,e}^{\sigma, \theta},
\]
then $\overline{\Omega}_{Q,e}^{\sigma, \theta} \in \mathbb{Z}_{\geq 0}[q]$ with constant term $1$ and degree at most $-\mathcal{E}(e)$.

\begin{Ex}
A symmetric quiver $Q$ admits an essentially unique involution $\sigma$ which restricts to the identity on $Q_0$. Any duality structure on $(Q, \sigma)$ is automatically $\sigma$-symmetric. The only $\sigma$-compatible stability is the trivial stability. In this setting we have $\overline{\Omega}_{Q,e}^{\sigma} \in \mathbb{Z}_{\geq 0}[q^2]$. Using Theorem \ref{thm:sdChowInterpretation} and Corollary \ref{cor:purePart} this implies that
\[
A_{-\mathcal{E}(e) -2k-1}(\mathfrak{M}_e^{\sigma,st})_{\mathbb{Q}} =0
\]
for all $k \in \mathbb{Z}$ and
\[
PH^k(\mathfrak{M}_e^{\sigma,st}) =0
\]
unless $k \equiv 0 \mod 4$.
\end{Ex}

\begin{Rem}
Without the assumption of $\sigma$-symmetry, Proposition \ref{prop:kernelRestriction} still implies equation \eqref{eq:stableQuot}. However in this generality we do not expect the right-hand side to be the correct definition of the orientifold DT invariant.
\end{Rem}

\begin{Thm}
\label{thm:noWallCrossing}
Let $Q$ be a $\sigma$-symmetric quiver. For any $\sigma$-compatible stabilities $\theta$ and $\theta^{\prime}$ there is a canonical isomorphism $W_Q^{\mathsf{prim}, \theta^{\prime}} \simeq W_Q^{\mathsf{prim}, \theta}$ of $\Lambda_Q^{\sigma,+} \times \mathbb{Z}$-graded vector spaces. In particular, the motivic orientifold Donaldson-Thomas invariant of a $\sigma$-symmetric quiver is independent of the choice of $\sigma$-compatible stability.
\end{Thm}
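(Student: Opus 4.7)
The plan is to construct a canonical $\theta$-independent target for $W_Q^{\mathsf{prim},\theta}$ by passing through the trivial-stability primitive quotient of the ChowHM. By Theorem~\ref{thm:sdChowInterpretation} (more precisely by equation~\eqref{eq:stableQuot} in its proof) we have $W_Q^{\mathsf{prim},\theta} \cong \mathcal{B}_Q^{\theta\mhyphen st} = \mathcal{B}_Q^{\theta\mhyphen ss}/(\mathcal{A}_{Q,\mu=0,+}^{\theta\mhyphen ss} \star \mathcal{B}_Q^{\theta\mhyphen ss})$, so I would take the intrinsic object $W_Q^{\mathsf{prim}} := \mathcal{B}_Q / (\mathcal{A}_{Q,+} \star \mathcal{B}_Q)$, built from the trivial-stability ChowHA and ChowHM, as the common target. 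Using the inclusion $\mathcal{B}_Q^{\theta\mhyphen ss} \hookrightarrow \mathcal{B}_Q$ arising from the $\sigma$-HN stratification (as discussed before Theorem~\ref{thm:hnFactor}) together with the observation that $\mathcal{A}_{Q,\mu=0}^{\theta\mhyphen ss}$ sits inside $\mathcal{A}_Q$, I would obtain an induced canonical map
\[
\Phi_\theta \colon W_Q^{\mathsf{prim},\theta} \cong \mathcal{B}_Q^{\theta\mhyphen st} \longrightarrow W_Q^{\mathsf{prim}}.
\]
The theorem will follow from showing that $\Phi_\theta$ is an isomorphism for every $\sigma$-compatible $\theta$, since then $\Phi_{\theta'}^{-1} \circ \Phi_\theta$ produces the required canonical isomorphism.

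Surjectivity of $\Phi_\theta$ should be immediate from the wall-crossing isomorphism of Theorem~\ref{thm:hnFactor}: every element of $\mathcal{B}_Q$ decomposes as a sum of terms of the form $a_1 \star \cdots \star a_r \star b$ with $a_k \in \mathcal{A}_{Q,\mu_k}^{\theta\mhyphen ss}$ of strictly positive slope and $b \in \mathcal{B}_Q^{\theta\mhyphen ss}$, and those with $r \geq 1$ are killed modulo $\mathcal{A}_{Q,+} \star \mathcal{B}_Q$. For injectivity I would pass to Poincaré series, combining Theorem~\ref{thm:hnFactor} with the ordinary ChowHA wall-crossing from \cite{franzen2015b},
\[
\mathbb{P}(\mathcal{B}_Q) = \Bigl(\prod_{\mu > 0,\, \downarrow} \mathbb{P}(\mathcal{A}_{Q,\mu}^{\theta\mhyphen ss})\Bigr) \cdot \mathbb{P}(\mathcal{B}_Q^{\theta\mhyphen ss}), \qquad \mathbb{P}(\mathcal{A}_Q) = \prod_{\mu \in \mathbb{Q},\, \downarrow} \mathbb{P}(\mathcal{A}_{Q,\mu}^{\theta\mhyphen ss}),
\]
and exploiting the duality-induced identification $\mathbb{P}(\mathcal{A}_{Q,-\mu}^{\theta\mhyphen ss})(\xi) = \mathbb{P}(\mathcal{A}_{Q,\mu}^{\theta\mhyphen ss})(\sigma(\xi))$ coming from $\sigma^*\theta = -\theta$ and the $\sigma$-symmetry of $Q$. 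The aim is to eliminate the positive-slope product and express $\mathbb{P}(W_Q^{\mathsf{prim},\theta})$ as a rational expression in the $\theta$-independent series $\mathbb{P}(\mathcal{B}_Q)$, $\mathbb{P}(\mathcal{A}_Q)$ and the $\sigma$-substitution. Together with surjectivity and the finite-dimensionality of each graded piece (Corollary~\ref{cor:intConj}), this would force $\Phi_\theta$ to be an isomorphism in each bidegree.

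The hard part will be the Poincaré-series cancellation: one must track carefully the twist factors coming from $\boxtimes^{\mathsf{tw}}$ and $\boxtimes^{S\mhyphen\mathsf{tw}}$ and the hyperbolic substitution $\xi^d \mapsto \xi^{d+\sigma(d)}$ that appears on the module side, and then verify that the $\sigma$-symmetry pairing of positive and negative slope factors in $\mathbb{P}(\mathcal{A}_Q)$ indeed cancels the unwanted $\theta$-dependent contribution of $\mathbb{P}(\mathcal{A}_{Q,\mu=0}^{\theta\mhyphen ss})$ that remains inside $\mathbb{P}(\mathcal{B}_Q^{\theta\mhyphen ss})$. A cleaner alternative would be to establish injectivity of $\Phi_\theta$ directly by a structural analysis of how the $\mathcal{A}_{Q,+}$-action on $\mathcal{B}_Q$ respects the $\sigma$-HN decomposition, but this appears to require a finer compatibility between the ordinary and $\sigma$-HN factorizations than is stated in the excerpt.
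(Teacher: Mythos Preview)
Your overall strategy---compare $W_Q^{\mathsf{prim},\theta}$ to the trivial-stability object $W_Q^{\mathsf{prim}}:=\mathcal{B}_Q/(\mathcal{A}_{Q,+}\star\mathcal{B}_Q)$---is exactly the paper's, only with the comparison map reversed: the paper uses the restriction $\rho\colon\mathcal{B}_Q\twoheadrightarrow\mathcal{B}_Q^{\theta\mhyphen ss}$ to produce $W_Q^{\mathsf{prim}}\to W_Q^{\mathsf{prim},\theta}$, whereas you use the $\sigma$-HN section $s$ to go the other way. Your well-definedness check and surjectivity of $\Phi_\theta$ are fine; both follow from Theorem~\ref{thm:hnFactor}, which in particular gives $\ker\rho\subset\mathcal{A}_{Q,+}\star\mathcal{B}_Q$.

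The genuine gap is your injectivity argument. The Poincar\'e-series route cannot close: from $\mathbb{P}(\mathcal{B}_Q^{\theta\mhyphen ss})$ and $\mathbb{P}(\mathcal{A}_{Q,\mu=0}^{\theta\mhyphen ss})$ alone one cannot recover $\mathbb{P}(W_Q^{\mathsf{prim},\theta})$, since the dimension of the image $\mathcal{A}_{Q,\mu=0,+}^{\theta\mhyphen ss}\star\mathcal{B}_Q^{\theta\mhyphen ss}$ is not determined by those series unless the module is free---which is not known in general. The cancellation you sketch removes the positive- and negative-slope factors from $\mathbb{P}(\mathcal{A}_Q)$ and leaves precisely $\mathbb{P}(\mathcal{A}_{Q,\mu=0}^{\theta\mhyphen ss})$, the one piece you cannot divide out of $\mathbb{P}(\mathcal{B}_Q^{\theta\mhyphen ss})$ without such a structural input.

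What is missing is a \emph{module-level} (not merely numerical) use of the duality. The paper invokes the algebra anti-involution $S_{\mathcal{A}}\colon\mathcal{A}_Q\to\mathcal{A}_Q$ induced by the duality structure together with the identity $S_{\mathcal{A}}(x)\star\xi=\pm\,x\star\xi$ from \cite[Proposition~3.5]{mbyoung2016b}. Since $S_{\mathcal{A}}$ swaps slope $\mu$ with slope $-\mu$, this gives $\mathcal{A}_{Q,\mu,+}\star\mathcal{B}_Q=\mathcal{A}_{Q,-\mu,+}\star\mathcal{B}_Q\subset\ker\rho$ for all $\mu\neq 0$, and hence $\rho(\mathcal{A}_{Q,+}\star\mathcal{B}_Q)\subset\mathcal{A}_{Q,\mu=0,+}^{\theta\mhyphen ss}\star\mathcal{B}_Q^{\theta\mhyphen ss}$. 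In your direction this yields injectivity of $\Phi_\theta$ in one line: if $s(b)\in\mathcal{A}_{Q,+}\star\mathcal{B}_Q$ then $b=\rho(s(b))\in\mathcal{A}_{Q,\mu=0,+}^{\theta\mhyphen ss}\star\mathcal{B}_Q^{\theta\mhyphen ss}$. So the ``finer compatibility'' you suspected you needed is exactly this single identity, and with it the structural alternative you mention goes through without any Poincar\'e-series detour.
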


\begin{proof}
It suffices to prove the theorem in the case $\theta^{\prime} = 0$. Let $\rho : \mathcal{B}_Q \twoheadrightarrow \mathcal{B}_Q^{\theta \mhyphen ss}$ be the restriction map. By Theorem \ref{thm:moduleComparison} together with the surjectivity of the restriction map $\mathcal{A}_{Q,\mu=0} \twoheadrightarrow \mathcal{A}_{Q,\mu=0}^{\theta \mhyphen ss}$ we have a commutative diagram
\[
\begin{tikzpicture}
  \matrix (m) [matrix of math nodes,nodes={anchor=center},row sep=2em,column sep=2em,minimum width=1.4em] {
\mathcal{A}_{Q, \mu=0,+} \star \mathcal{B}_Q & \mathcal{A}^{\theta \mhyphen ss}_{Q, \mu=0,+} \star \mathcal{B}^{\theta \mhyphen ss}_Q \\
\mathcal{A}_{Q,+} \star \mathcal{B}_Q &  \\
\mathcal{B}_Q & \mathcal{B}_Q^{\theta \mhyphen ss} \\
};
\draw  (m-1-1) edge[->>] (m-1-2);
\draw [right hook->] (m-1-1) -- (m-2-1);
\draw [right hook->] (m-1-2) -- (m-3-2);
\draw [right hook->] (m-2-1) -- (m-3-1);
\draw  (m-3-1) edge[->>] node[below]{$\rho$} (m-3-2);
\end{tikzpicture}
\]

We have a decomposition
\[
\mathcal{A}_Q = \bigoplus_{\mu \in \mathbb{Q}} \mathcal{A}_{Q, \mu}
\]
as $\mathbb{Z}$-graded vector spaces. Suppose that $x \in \mathcal{A}_{Q,\mu,+}$ and $\xi \in \mathcal{B}_Q$ are homogeneous. If $\mu >0$, then $\rho( x \star \xi) =0$. Indeed, if $d \in \Lambda_{Q,\mu}^+$, then the action map $\mathcal{A}_{Q,d} \boxtimes \mathcal{B}_{Q,e} \rightarrow \mathcal{B}_{Q,H(d)+e}$ factors through the map $A_{\mathsf{G}_{H(d)+e}^{\sigma}}^{\bullet}(R_{H(d)+e}^{\sigma,(d,e)})_{\mathbb{Q}} \rightarrow A_{\mathsf{G}_{H(d)+e}^{\sigma}}^{\bullet}(R_{H(d)+e}^{\sigma})_{\mathbb{Q}}$, whose image is annihilated by $\rho$. If instead $\mu <0$, then $S_{\mathcal{A}}(x) \in \mathcal{A}_{Q,-\mu,+}$ where $S_{\mathcal{A}}: \mathcal{A}_Q \rightarrow \mathcal{A}_Q$ is the algebra anti-involution induced by the duality structure. Hence $\rho (S_{\mathcal{A}}(x) \star \xi) =0$. But $S_{\mathcal{A}}(x) \star \xi$ and $x \star \xi$ differ by sign \cite[Proposition 3.5]{mbyoung2016b}, so again we have $\rho(x \star \xi)=0$.

It follows that $\rho$ descends to a surjection $W^{\mathsf{prim}}_Q  \twoheadrightarrow W^{\mathsf{prim},\theta}_Q$ of graded vector spaces. To see that this map is also injective, suppose that $\xi \in \mathcal{B}_{Q,e}$ satisfies
\[
\rho(\xi) \in \mathcal{A}_{Q,\mu=0,+}^{\theta \mhyphen ss} \star \mathcal{B}_Q^{\theta \mhyphen ss}.
\]
We need to show that $\xi \in \mathcal{A}_{Q,+} \star \mathcal{B}_Q$. Since $\mathcal{A}_{Q,\mu=0} \rightarrow \mathcal{A}^{\theta \mhyphen ss}_{Q,\mu=0}$ is surjective, there exists an element $\xi^{\prime} \in \mathcal{A}_{Q,+} \star \mathcal{B}_Q$ such that $\xi - \xi^{\prime} \in \ker (\rho)$. By the decomposition \eqref{eq:hnDecomp} we therefore have
\[
\xi - \xi^{\prime} \in \bigoplus_{(d^{\bullet},e^{\infty}) \in \mathsf{HN}^{\sigma}(e) \backslash (e)} A_{\mathsf{G}_e^\sigma}^{\bullet-\codim_{R_e^{\sigma}}(R_{d^\bullet,e^\infty}^{\sigma,HN})}(R_{d^\bullet,e^\infty}^{\sigma,HN}) 
\]
which by the commutative diagram (\ref{d:multVsHN}) (in the proof of Theorem \ref{thm:hnFactor}) is contained in $\mathcal{A}_{Q,+} \star \mathcal{B}_Q$.
It follows that $\xi \in \mathcal{A}_{Q,+} \star \mathcal{B}_Q$.
\end{proof}

Combining Theorems \ref{thm:sdChowInterpretation} and \ref{thm:noWallCrossing} we see that the rational Chow groups of $\mathfrak{M}_e^{\sigma,\theta \mhyphen st}$ are independent of $\theta$.

\subsection{Orientifold DT invariants of  loop quivers}
\label{sec:loopQuiverApplication}

Let $L_m$ be the quiver with one node and $m \geq 0$ loops. Then $L_m$ admits a unique involution, being the identity on both nodes and arrows, and we are in the setting of the example from Section \ref{sec:oriDTChow}. Any stability is equivalent to the trivial stability, which is $\sigma$-compatible. A duality structure on $L_m$ is given by a sign $s$ and $m$ signs $\tau$. Let $\tau_+$ (respectively, $\tau_-$) be the number of the latter which are positive (negative).

Suppose that $\tau$ is identically $-1$, that is $\tau_- = m$. If $s=1$, then the variety of self-dual representations is $R_e^{\sigma} = \mathfrak{so}_e(\mathbb{C})^{\oplus m}$ with the simultaneous adjoint action of $\mathsf{G}_e^{\sigma} = \mathsf{O}_e(\mathbb{C})$ while if $s=-1$, then $R_e^{\sigma} = \mathfrak{sp}_e(\mathbb{C})^{\oplus m}$ with the simultaneous adjoint action of $\mathsf{G}_e^{\sigma} = \mathsf{Sp}_e(\mathbb{C})$. Hence $\mathfrak{M}_e^{\sigma, st}$ is a moduli space of stable $m$-tuples in a classical Lie algebra. When $s=1$ we have a decomposition $\mathcal{B}_{L_m} = \mathcal{B}^D_{L_m} \oplus \mathcal{B}_{L_m}^B$, the summands corresponding to even and odd dimensional representations, respectively.

\begin{Thm}
\label{thm:orthSymDuality}
For any $m \geq 0$ and $e \geq 0$ we have an isomorphism
\[
A_{\bullet}((\mathfrak{sp}_{2e}^{\oplus m})^{st} \slash \mathsf{Sp}_{2e})_{\mathbb{Q}} \simeq A_{\bullet}((\mathfrak{so}_{2e+1}^{\oplus m})^{st} \slash \mathsf{O}_{2e+1})_{\mathbb{Q}}
\]
of $\mathbb{Z}$-graded rational vector spaces.
\end{Thm}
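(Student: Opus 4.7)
The strategy is to apply Theorem \ref{thm:sdChowInterpretation} to recast both sides as primitive parts of cohomological Hall modules of $L_m$ for two different duality structures, and then to verify the resulting identity using the explicit shuffle description of $\mathcal{M}_{L_m}$ from \cite[Theorem 3.3]{mbyoung2016b}. Both moduli schemes arise from $L_m$ with trivial stability and $\tau \equiv -1$: the symplectic side uses $s = -1$ and self-dual dimension vector $2e$ with isometry group $\mathsf{Sp}_{2e}(\mathbb{C})$, while the odd orthogonal side uses $s = +1$ and self-dual dimension vector $2e+1$ with isometry group $\mathsf{O}_{2e+1}(\mathbb{C})$. Since $L_m$ is $\sigma$-symmetric in either setup, applying Theorem \ref{thm:sdChowInterpretation} reduces the theorem to an isomorphism of $\mathbb{Z}$-graded vector spaces
\[
\bigoplus_l W^{\mathsf{prim}, 0}_{L_m^{-}, (2e, l)} \simeq \bigoplus_l W^{\mathsf{prim}, 0}_{L_m^{+}, (2e+1, l)},
\]
where $L_m^{\pm}$ denotes $L_m$ equipped with the corresponding duality structure, after accounting for the uniform degree shift by $\mathcal{E}$ appearing in the theorem.

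Next I would compute each primitive part using the signed shuffle description of $\mathcal{M}_{L_m}$ from \cite[Theorem 3.3]{mbyoung2016b}. In both duality setups $\mathcal{M}_{L_m}$ is realized on Laurent polynomials invariant under the hyperoctahedral group, which is the common Weyl group $W(B_e) = W(C_e) = (\mathbb{Z}/2)^e \rtimes S_e$, and the action of Kontsevich-Soibelman's shuffle algebra $\mathcal{H}_{L_m}$ is given by an explicit signed shuffle whose rational kernel encodes the data $(s, \tau)$. The primitive quotient is cut out by quotienting by the action of $\mathcal{H}_{L_m, +}$, equivalently by the ideal of positive-degree hyperoctahedral invariants. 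The desired comparison would follow by writing down an explicit isomorphism between the two signed shuffle modules under which the two CoHA actions correspond; this is natural because $\mathfrak{sp}_{2e}$ and $\mathfrak{so}_{2e+1}$ share the same Weyl group of type $BC_e$ and the same rank.

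The main obstacle will be matching the sign data in the two shuffle modules, since the rational kernel appearing in $\mathcal{M}_{L_m}$ depends on $s$ and on $\tau$ in a non-trivial way. Concretely, the difference between the symplectic ($s=-1$) and odd orthogonal ($s=+1$) kernels must be absorbed by the shift of dimension from $2e$ to $2e+1$; on the odd orthogonal side there is an extra variable corresponding to the trivial one-dimensional factor in a maximal torus of $\mathsf{O}_{2e+1}$, and I expect the isomorphism to be implemented by pairing this new variable against the mismatched sign factors. A consistency check is to compare Poincar\'e series computed from \cite[Theorem 4.4]{mbyoung2015} together with the motivic refinement discussed after Proposition \ref{prop:noOddCohom}; the numerical equality of the two series is what should ultimately be promoted to the asserted isomorphism of graded Chow groups.
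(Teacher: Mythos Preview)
Your overall strategy is the same as the paper's: reduce via Theorem~\ref{thm:sdChowInterpretation} to an identification $W^{\mathsf{prim},B}_{L_m,(2e+1,\bullet)}\simeq W^{\mathsf{prim},C}_{L_m,(2e,\bullet)}$, and then verify this using the signed shuffle description of $\mathcal{M}_{L_m}$ from \cite[Theorem~3.3]{mbyoung2016b}. However, the mechanism you propose for the second step is incorrect, and the actual argument is both simpler and different from what you sketch.

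The maximal torus of $\mathsf{O}_{2e+1}(\mathbb{C})$ has rank $e$, not $e+1$; there is no ``extra variable''. The graded pieces $\mathcal{M}_{L_m,2e}^C$ and $\mathcal{M}_{L_m,2e+1}^B$ are both the same hyperoctahedral-invariant polynomial ring in $e$ variables, so there is nothing to pair the sign discrepancy against in the way you suggest. What actually happens is that, when you write out the two shuffle kernels, they differ only by an overall scalar $2^d$ in the weight-$d$ piece of the $\mathcal{H}_{L_m}$-action. The paper packages this as follows: define the graded algebra automorphism $\phi:\mathcal{H}_{L_m}\to\mathcal{H}_{L_m}$ by $f_d\mapsto 2^d f_d$, and observe directly from \cite[Theorem~3.3]{mbyoung2016b} that the \emph{identity} map on underlying graded vector spaces gives an isomorphism of $\mathcal{H}_{L_m}$-modules $(\mathcal{M}_{L_m}^B)_{\phi}\simeq \mathcal{M}_{L_m}^C[1]$, where $[1]$ is the shift in $\Lambda_{L_m}^{\sigma,+}$-degree. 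Twisting by an automorphism does not change the primitive quotient, so $W^{\mathsf{prim},B}_{L_m}\simeq W^{\mathsf{prim},C}_{L_m}[1]$ as $\Lambda_{L_m}^{\sigma,+}\times\mathbb{Z}$-graded vector spaces. Finally one checks $\mathcal{E}^B(2e+1)=\mathcal{E}^C(2e)=(1-m)e(2e+1)$, so the degree shifts in Theorem~\ref{thm:sdChowInterpretation} match and the Chow groups are identified in the same homological degree. Your proposed ``consistency check'' via Poincar\'e series is unnecessary once this module isomorphism is in hand.
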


\begin{proof}
Define a $\Lambda_{L_m}^+ \times \mathbb{Z}$-graded ring automorphism
\[
\phi: \mathcal{H}_{L_m} \rightarrow \mathcal{H}_{L_m}
\qquad
\mathcal{H}_{L_m,d} \owns f_d \mapsto 2^d f_d.
\]
Let $(\mathcal{M}_{L_m}^B)_{\phi}$ be the $\phi$-twisted module associated to $\mathcal{M}_{L_m}^B$. Explicitly, $(\mathcal{M}_{L_m}^B)_{\phi}$ equals $\mathcal{M}_{L_m}^B$ as a graded abelian group and has $\mathcal{H}_{L_m}$-module structure
\[
f_d \star_{\phi} g = \phi(f_d) \star g.
\]
Comparing the explicit signed shuffle descriptions of $\mathcal{M}_{L_m}^B$ and $\mathcal{M}_{L_m}^C$ given in \cite[Theorem 3.3]{mbyoung2016b}, we see immediately that $(\mathcal{M}_{L_m}^B)_{\phi} \simeq \mathcal{M}_{L_m}^C [1]$ via the identity map, where $[1]$ denotes $\Lambda_{L_m}^{\sigma,+}$-degree shift by one. It follows that $W^{\mathsf{prim},B}_{L_m} \simeq W^{\mathsf{prim},C}_{L_m}[1]$ as $\Lambda_{L_m}^+ \times \mathbb{Z}$-graded vector spaces. Applying Theorem \ref{thm:sdChowInterpretation} completes the proof.
\end{proof}

Variations of Theorem \ref{thm:orthSymDuality} arise by choosing different duality structures on $L_m$. Fix $1 \leq m_0 \leq m$ and consider the duality structures
\[
(s; \tau_+, \tau_-) = (-1; m-m_0, m_0), \qquad 
(s^{\prime}; \tau^{\prime}_+, \tau^{\prime}_-) = (1; m_0-1,m-m_0 +1).
\]
The corresponding varieties of self-dual representations are
\[
R_e^{\sigma} = \mathfrak{sp}_{2e}(\mathbb{C})^{\oplus m_0} \oplus (\bigwedge\nolimits^2 \mathbb{C}^{2e})^{\oplus m - m_0}, \qquad \mathsf{G}_e^{\sigma} = \mathsf{Sp}_{2e}(\mathbb{C})
\]
and, in even dimensions,
\[
R_e^{\prime \sigma} =\mathfrak{so}_{2e}(\mathbb{C})^{\oplus m - m_0 +1} \oplus (\Sym^2 \mathbb{C}^{2e})^{\oplus m_0-1}, \qquad \mathsf{G}_e^{\prime \sigma} = \mathsf{O}_{2e}(\mathbb{C}).
\]
The vector space $\mathbb{C}^{2e}$ denotes the fundamental representation of $\mathsf{Sp}_{2e}(\mathbb{C})$ or $\mathsf{O}_{2e}(\mathbb{C})$, as appropriate. Up to a twist by an automorphism of $\mathcal{H}_{L_m}$, the associated Hall modules $\mathcal{M}_{L_m}^C$ and $\mathcal{M}_{L_m}^{\prime D}$ are isomorphic. Arguing as in the proof of Theorem \ref{thm:orthSymDuality} we see that the rational Chow groups of the associated moduli spaces of $\sigma$-stable self-dual representations are isomorphic. Similarly, for $m \geq 1$ and duality structures
\[
(s; \tau_+, \tau_-) = (1; 1, m-1), \qquad
(s^{\prime}; \tau^{\prime}_+, \tau^{\prime}_-) = (1; m, 0)
\]
we obtain a twisted isomorphism between  $\mathcal{M}_{L_m}^B$ and $\mathcal{M}_{L_m}^{ \prime D}[1]$ and hence an isomorphism of Chow groups.

Finally, we explain how to compute $\Omega_{L_m}^{\sigma}$ for any duality structure. After fixing a Lie type $B$, $C$ or $D$, in \cite[Theorem 4.6]{mbyoung2016b} it is proved that $\mathcal{M}_{L_m}$ is freely generated by $W^{\mathsf{prim}}_{L_m}$ as a module over an explicitly defined subalgebra $\widetilde{\mathcal{H}}_{L_m} \subset \mathcal{H}_{L_m}$. At the level of generating series this implies the factorization
\[
A^{\sigma}_{L_m} = \widetilde{A}_{L_m} \Omega^{\sigma}_{L_m}
\]
Here we have written the parity twisted Hilbert-Poincar\'{e} series of $\mathcal{M}_{L_m}$ as
\[
A_{L_m}^{\sigma} =  \sum_{e \in \Lambda_{L_m}^{\sigma, +}} \frac{(-q^{\frac{1}{2}})^{\mathcal{E}(e)}}{\prod_{j=1}^{\lfloor \frac{e}{2} \rfloor} (1-q^{2j})} \xi^e
\]
with $e$ restricted to be even or odd depending on the type. The parity twisted Hilbert-Poincar\'{e} series $\widetilde{A}_{L_m}$ of $\widetilde{\mathcal{H}}_{L_m}$ can be written in terms of the $q$-Pochhammer symbol $(x; q)_{\infty} = \prod_{k=0}^{\infty}(1-xq^k)$ and the $\mathbb{Z}_2$-equivariant DT invariants:
\[
\widetilde{A}_{L_m} = \prod_{\substack{(e,k) \in \Lambda_{L_m}^{\sigma,+} \times \mathbb{Z} \\ \lambda \in \{ \pm \}}} (q^{\frac{k}{2} + \delta_{-1,\lambda}} \xi^e ; q^2)_{\infty}^{- \widetilde{\Omega}^{\lambda}_{L_m,(e,k)}}.
\]
Explicitly, writing the motivic DT invariant of $L_m$ as
\[
\Omega_{L_m}(q^{\frac{1}{2}}, t) = \sum_{(d,k) \in \Lambda_{L_m}^+ \times \mathbb{Z}}  \Omega_{L_m,(d,k)}q^{\frac{k}{2}} t^d \in \mathbb{Z}[q^{\frac{1}{2}}, q^{-\frac{1}{2}}] \pser{ t },
\]
we have $\widetilde{\Omega}_{L_m,(2d+1,k)}^{\pm} =0$ and 
\[
\widetilde{\Omega}_{L_m,(2d,k)}^+ = \begin{cases} \Omega_{L_m,(d,k)} & \mbox{if } \chi(e,d) + \mathcal{E}(d) + \frac{k -\chi(d,d)}{2} \equiv 0 \mod 2, \\
0 & \mbox{if } \chi(e,d) + \mathcal{E}(d) + \frac{k -\chi(d,d)}{2} \equiv 1 \mod 2
\end{cases}
\]
and
\[
\widetilde{\Omega}_{L_m,(2d,k)}^- = \begin{cases} 0 & \mbox{if } \chi(e,d) + \mathcal{E}(d) + \frac{k -\chi(d,d)}{2} \equiv 0 \mod 2, \\
\Omega_{L_m, (d,k)} & \mbox{if }  \chi(e,d) + \mathcal{E}(d) + \frac{k -\chi(d,d)}{2} \equiv 1 \mod 2. 
\end{cases}
\]

\begin{Ex}
Suppose that $m=3$. Using \cite[Theorem 6.8]{reineke2012} we find that the motivic DT invariant is given by
\begin{multline*}
\Omega_{L_3} = q^{-1} t + q^{-4} t^2 + q^{-9}(1 + q^2 + q^3) t^3 + \\
q^{-16}(1 + q^2 + q^3 + 2 q^4 + q^5 + 2q^6 +q^7 + q^8) t^4 + O(t^5).
\end{multline*}
Take the duality structure $(s; \tau_+,\tau_-)=(1;0,m)$. Then the motivic orientifold DT invariant is
\begin{multline*}
\Omega_{L_3}^B = \xi + q^{-3} \xi^3 + q^{-10}(1 +q^2 + 2 q^4) \xi^5 + \\
q^{-21}(1+ q^2 + 2 q^4 + 3 q^6 + 4 q^8 + 4 q^{10} + 4 q^{12} + q^{14})\xi^7 + \\ q^{-36}(1 + q^2 + 2q^4 + 3 q^6 + 5 q^8 + 6 q^{10} + 9 q^{12}  + 10q^{14} +  \\  13 q^{16} + 14 q^{18} + 15 q^{20} + 13 q^{22} + 10q^{24} + 3q^{26})\xi^9 + O(\xi^{11}).
\end{multline*}
Theorem \ref{thm:sdChowInterpretation} implies that the coefficient of $\xi^{2e+1}$ in $\Omega_{L_3}^B$ is the Chow theoretic Poincar\'{e} polynomial of $(\mathfrak{so}_{2e+1}^{\oplus 3})^{st} \slash \mathsf{O}_{2e+1}$, which by Theorem \ref{thm:orthSymDuality} agrees with the Chow theoretic Poincar\'{e} polynomial of $(\mathfrak{sp}_{2e}^{\oplus 3})^{st} \slash \mathsf{Sp}_{2e}$. 
\end{Ex}

\setcounter{secnumdepth}{0}


\footnotesize

\bibliographystyle{plain}
\bibliography{mybib}

\def\cprime{$'$}
\begin{thebibliography}{10}

\bibitem{atiyah1983}
M.~Atiyah and R.~Bott.
\newblock The {Y}ang-{M}ills equations over {R}iemann surfaces.
\newblock {\em Philos. Trans. Roy. Soc. London Ser. A}, 308(1505):523--615,
  1983.

\bibitem{brion1998}
M.~Brion.
\newblock Equivariant cohomology and equivariant intersection theory.
\newblock In {\em Representation theories and algebraic geometry ({M}ontreal,
  {PQ}, 1997)}, volume 514 of {\em NATO Adv. Sci. Inst. Ser. C Math. Phys.
  Sci.}, pages 1--37. Kluwer Acad. Publ., Dordrecht, 1998.
\newblock Notes by Alvaro Rittatore.

\bibitem{brion2012b}
M.~Brion and R.~Joshua.
\newblock Notions of purity and the cohomology of quiver moduli.
\newblock {\em Internat. J. Math.}, 23(9):1250097, 30, 2012.

\bibitem{brown1982}
E.~Brown, Jr.
\newblock The cohomology of {$B{\rm SO}_{n}$} and {$B{\rm O}_{n}$} with integer
  coefficients.
\newblock {\em Proc. Amer. Math. Soc.}, 85(2):283--288, 1982.

\bibitem{chen2014}
Z.~Chen.
\newblock Geometric construction of generators of {C}o{HA} of doubled quiver.
\newblock {\em C. R. Math. Acad. Sci. Paris}, 352(12):1039--1044, 2014.

\bibitem{davison2016a}
B.~Davison and S.~Meinhardt.
\newblock Cohomological {D}onaldson-{T}homas theory of a quiver with potential
  and quantum enveloping algebras.
\newblock ar{X}iv:1601.02479, 2016.

\bibitem{denef2010}
F.~Denef, M.~Esole, and M.~Padi.
\newblock Orientiholes.
\newblock {\em J. High Energy Phys.}, (3):045, 44, 2010.

\bibitem{derksen2002}
H.~Derksen and J.~Weyman.
\newblock Generalized quivers associated to reductive groups.
\newblock {\em Colloq. Math.}, 94(2):151--173, 2002.

\bibitem{dhillonyoung2016}
A.~Dhillon and M.~Young.
\newblock The motive of the classifying stack of the orthogonal group.
\newblock {\em Michigan Math. J.}, 65(1):189--197, 2016.

\bibitem{edidin1997}
D.~Edidin and W.~Graham.
\newblock Characteristic classes in the {C}how ring.
\newblock {\em J. Algebraic Geom.}, 6(3):431--443, 1997.

\bibitem{edidin1998}
D.~Edidin and W.~Graham.
\newblock Equivariant intersection theory.
\newblock {\em Invent. Math.}, 131(3):595--634, 1998.

\bibitem{efimov2012}
A.~Efimov.
\newblock Cohomological {H}all algebra of a symmetric quiver.
\newblock {\em Compos. Math.}, 148:1133--1146, 2012.

\bibitem{franzen2015}
H.~Franzen.
\newblock On the semi-stable {C}o{H}a and its modules arising from smooth
  models.
\newblock ar{X}iv:1502.04327, 2015.

\bibitem{franzen2016}
H.~Franzen.
\newblock On cohomology rings of non-commutative {H}ilbert schemes and
  {C}o{H}a-modules.
\newblock {\em Math. Res. Lett.}, 23(3):804--840, 2016.

\bibitem{franzen2015b}
H.~Franzen and M.~Reineke.
\newblock Semi-stable {C}how-{H}all algebras of quivers and quantized
  {D}onaldson-{T}homas invariants.
\newblock ar{X}iv:1512.03748, 2015.

\bibitem{harada2011}
M.~Harada and G.~Wilkin.
\newblock Morse theory of the moment map for representations of quivers.
\newblock {\em Geom. Dedicata}, 150:307--353, 2011.

\bibitem{hausel2013}
T.~Hausel, E.~Letellier, and F.~Rodriguez-Villegas.
\newblock Positivity for {K}ac polynomials and {DT}-invariants of quivers.
\newblock {\em Ann. of Math. (2)}, 177(3):1147--1168, 2013.

\bibitem{king1994}
A.~King.
\newblock Moduli of representations of finite-dimensional algebras.
\newblock {\em Quart. J. Math. Oxford Ser. (2)}, 45(180):515--530, 1994.

\bibitem{kirwan1984}
F.~Kirwan.
\newblock {\em Cohomology of quotients in symplectic and algebraic geometry},
  volume~31 of {\em Mathematical Notes}.
\newblock Princeton University Press, Princeton, NJ, 1984.

\bibitem{kontsevich2011}
M.~Kontsevich and Y.~Soibelman.
\newblock Cohomological {H}all algebra, exponential {H}odge structures and
  motivic {D}onaldson-{T}homas invariants.
\newblock {\em Commun. Number Theory Phys.}, 5(2):231--352, 2011.

\bibitem{laumon1996}
G.~Laumon and M.~Rapoport.
\newblock The {L}anglands lemma and the {B}etti numbers of stacks of
  {$G$}-bundles on a curve.
\newblock {\em Internat. J. Math.}, 7(1):29--45, 1996.

\bibitem{letellier2015}
E.~Letellier.
\newblock D{T}-invariants of quivers and the {S}teinberg character of {${\rm
  GL}_n$}.
\newblock {\em Int. Math. Res. Not. IMRN}, (22):11887--11908, 2015.

\bibitem{meinhardt2014}
S.~Meinhardt and M.~Reineke.
\newblock Donaldson-{T}homas invariants versus intersection cohomology of
  quiver moduli.
\newblock ar{X}iv:1411.4062, 2014.

\bibitem{pandharipande1998}
R.~Pandharipande.
\newblock Equivariant {C}how rings of {${\rm O}(k),\ {\rm SO}(2k+1)$}, and
  {${\rm SO}(4)$}.
\newblock {\em J. Reine Angew. Math.}, 496:131--148, 1998.

\bibitem{reineke2003}
M.~Reineke.
\newblock The {H}arder-{N}arasimhan system in quantum groups and cohomology of
  quiver moduli.
\newblock {\em Invent. Math.}, 152(2):349--368, 2003.

\bibitem{reineke2012}
M.~Reineke.
\newblock Degenerate cohomological {H}all algebra and quantized
  {D}onaldson-{T}homas invariants for {$m$}-loop quivers.
\newblock {\em Doc. Math.}, 17:1--22, 2012.

\bibitem{totaro1999}
B.~Totaro.
\newblock The {C}how ring of a classifying space.
\newblock In {\em Algebraic {$K$}-theory ({S}eattle, {WA}, 1997)}, volume~67 of
  {\em Proc. Sympos. Pure Math.}, pages 249--281. Amer. Math. Soc., Providence,
  RI, 1999.

\bibitem{mbyoung2015}
M.~Young.
\newblock Self-dual quiver moduli and orientifold {D}onaldson-{T}homas
  invariants.
\newblock {\em Commun. Number Theory Phys.}, 9(3):437--475, 2015.

\bibitem{mbyoung2016b}
M.~Young.
\newblock Representations of cohomological {H}all algebras and
  {D}onaldson-{T}homas theory with classical structure groups.
\newblock ar{X}iv:1603.05401, 2016.

\end{thebibliography}
 
\end{document}